\title{The Structure of Masses of rank $n$ Quadratic Lattices of varying determinant 
over number fields}
\author{Jonathan Hanke}
\date{\today}                                           
\begin{document}
\maketitle

\newtheorem{thm}{Theorem}[section]  
\newtheorem{lem}[thm]{Lemma}  
\newtheorem{cor}[thm]{Corollary}  
\newtheorem{defn}[thm]{Definition}  
\newtheorem{rem}[thm]{Remark}  
\newtheorem{conj}[thm]{Conjecture}
\newtheorem{question}[thm]{Question}

\newtheorem{sublem}[thm]{Sublemma}

\newcommand{\Mat}{\mathrm{Mat}}
\renewcommand{\O}{\mathrm{O}}
\newcommand{\SO}{\mathrm{SO}}
\newcommand{\SL}{\mathrm{SL}}
\newcommand{\GL}{\mathrm{GL}}
\newcommand{\ord}{\mathrm{ord}}
\newcommand{\Gen}{\mathrm{Gen}}
\newcommand{\Sym}{\mathrm{Sym}}
\newcommand{\sgn}{\mathrm{sgn}}
\newcommand{\SqCl}{\mathrm{SqCl}}
\newcommand{\Mass}{\mathrm{Mass}}
\newcommand{\Aut}{\mathrm{Aut}}
\newcommand{\Disc}{\mathrm{Disc}}
\newcommand{\DiscSq}{\mathrm{DiscSq}}
\newcommand{\Pic}{\mathrm{Pic}}
\newcommand{\Gal}{\mathrm{Gal}}
\newcommand{\Vol}{\mathrm{Vol}}

\newcommand{\N}{\mathbb{N}}
\newcommand{\Z}{\mathbb{Z}}
\newcommand{\Q}{\mathbb{Q}}
\newcommand{\R}{\mathbb{R}}
\newcommand{\C}{\mathbb{C}}
\newcommand{\F}{\mathbb{F}}
\renewcommand{\S}{\mathbb{S}}
\newcommand{\T}{\mathbb{T}}
\newcommand{\U}{\mathbb{U}}
\renewcommand{\O}{\mathbb{O}}
\newcommand{\E}{\mathbb{E}}
\newcommand{\A}{\mathbb{A}}

\newcommand{\V}{\mathbb{V}}
\newcommand{\res}{\mathrm{res}}
\newcommand{\Nm}{\mathrm{Norm}}
\newcommand{\Image}{\mathrm{Image}}

\newcommand{\x}{\vec x}
\newcommand{\y}{\vec y}
\renewcommand{\c}{\vec c}

\newcommand{\M}{\mathcal{M}}
\renewcommand{\H}{\mathcal{H}}

\renewcommand{\[}{\left[}
\renewcommand{\]}{\right]}
\renewcommand{\(}{\left(}
\renewcommand{\)}{\right)}

\newcommand{\ra}{\rightarrow}
\newcommand{\al}{\alpha}
\newcommand{\ve}{\varepsilon}

\newcommand{\leg}[2]{\(\frac{#1}{#2}\)}

\newcommand{\p}{\mathfrak{p}}
\newcommand{\q}{\mathfrak{q}}
\newcommand{\OF}{\mathcal{O}_F}
\newcommand{\OK}{\mathcal{O}_K}
\newcommand{\Op}{\mathcal{O}_\p}
\newcommand{\Oq}{\mathcal{O}_\q}
\newcommand{\Ov}{\mathcal{O}_v}

\newcommand{\Supp}{\mathrm{Supp}}
\newcommand{\Cond}{\mathrm{Cond}}     

\newcommand{\n}{\mathfrak{n}}
\newcommand{\s}{\mathfrak{s}}
\renewcommand{\v}{\mathfrak{v}}
\newcommand{\g}{\mathcal{G}}

\newcommand{\I}{\mathfrak{I}}

\renewcommand{\P}{\mathcal{P}}

\newcommand{\SqAfInt}{\SqCl(\A_{F, \mathbf{f}}^\times, U_\mathbf{f})}
\newcommand{\SqAf}{\SqCl(\A_{F, \mathbf{f}}^\times)}

\newcommand{\SqFInt}{\SqCl(F^\times, \OF^\times)}
\newcommand{\SqF}{\SqCl(F^\times)}

\newcommand{\SqFpInt}{\SqCl(F_\p^\times, \Op^\times)}
\newcommand{\SqFp}{\SqCl(F_\p^\times)}

\newcommand{\SqFvInt}{\SqCl(F_v^\times, \Ov^\times)}
\newcommand{\SqFv}{\SqCl(F_v^\times)}

\newcommand{\SqOp}{\SqCl(\Op^\times)}
\newcommand{\SqOF}{\SqCl(\OF^\times)}

\section{Overview and Notation}

\subsection{Introduction}

The notion of the ``mass'' of a positive definite integral quadratic form $Q$ was first introduced as such by Smith in his 1867 paper \cite{Smith:1867} and also by Minkowski in his 1885 dissertation \cite{Mink}, though special cases can be found in earlier works of Gauss, Dirichlet and Eisenstein.
In this setting, the {\bf mass} of $Q$ is defined to be the rational number 
$$
\Mass(Q) := \sum_{[Q'] \in \Gen(Q)} \frac{1}{|\Aut(Q')|}  \in \Q > 0
$$
given by summing the reciprocals of the sizes of the automorphism groups $\Aut(Q')$ of all $\Z$-equivalence classes of quadratic forms $Q'$ (denoted by $[Q']$) where $Q'$ is equivalent to $Q$ by some invertible linear change of variables over $\Z/m\Z$ for each $m \in \N$ and also over the real numbers $\R$.  (The set of such $Q'$ is called the {\bf genus} $\Gen(Q)$ of $Q$, and it is well-known that this sum is finite.)
The mass is an interesting quantity because it is closely related to the number of classes in the genus (called the {\bf class number} $h(Q)$ of $Q$), but also because it can be computed analytically as an infinite product over all primes $p \in \N$.

In 1935-1937,  Siegel \cite{Si1, Si2, Si3, Siegel:1963vn} revolutionized the analytic theory of quadratic forms by providing a general framework for understanding previous mass formulas, as well as exact formulas for representing numbers by certain quadratic forms.  All of these formulas have a very ``volume-theoretic" character, and express certain weighted sums over classes $[Q'] \in \Gen(Q)$ as a product over all places $v$ (of our given number field, e.g. $\Q$) as a product of ``local densities'' which measure the ``volume of solutions'' of some given quadratic diophantine equation.
In particular Siegel's Mass formula states that 
$$
\Mass(Q) = 2\prod_v \beta_{Q,v}(Q)^{-1}
$$
where the $\beta_{Q,v}(Q)$ are the local densities (for $Q$ representing itself) at the place $v$.
This perspective was used by Tamagawa \cite{Tamagawa:1966ys, Weil:1982zr} in the 1960's to give a new proof of Siegel's formula for $\Mass(Q)$ in terms of a canonical (Tamagawa) measure on the associated adelic special orthogonal group $\SO_\A(Q)$.

To use the mass formula to evaluate $\Mass(Q)$ for any given quadratic form $Q$ involves evaluating the local density factors at all places $v$, which can be rather complicated at the primes $p \mid 2\det(Q)$. These factors have been worked out in various cases by many authors, though the state of the literature about these formulas is not entirely adequate.  In particular, a correct formula at the place $p=2$ is given in \cite{CS} without proof, and this appears not to agree with the proven formula \cite{Watson:1976dq}.  For $p \neq 2$ correct formulas are given in \cite{Pall, CS}, and though the analogous formulas holds at any prime ideal $\p\nmid 2$ of a number field $F$, a reference for this is difficult to find (perhaps in \cite{Pfeuffer:1971pd}?).

\medskip 
One very natural question 
that has not received much attention is how to understand the {\bf total mass} $\mathrm{TMass}_n(S)$ of positive definite $\Z$-valued quadratic forms in $n$ variables with (Hessian) determinant $S$, defined as 
$$
\mathrm{TMass}_n(S) := \sum_{[Q] \text{ with} \det(Q) = S} \frac{1}{|\Aut(Q)|},
$$
and how $\mathrm{TMass}_n(S)$ varies with $S$.  These are the main questions that we address in this paper, for any fixed $n \geq 2$.

From the perspective of Siegel's mass formula, the total mass $\mathrm{TMass}_n(S)$ is a somewhat less natural quantity to study than $\Mass(Q)$ since it involves summing masses of quadratic forms across all genera of a given determinant, giving a complicated sum of complicated infinite products.  However, rather remarkably, this summation has the effect of smoothing out much of the variation of the mass formula, and allows us to give a formula for the total mass $\mathrm{TMass}_n(S)$ for any given determinant $S$.  
Since it is well-known that the variation of the archimedean local density is given by $\beta_{Q,\infty}(Q)^{-1} = C_n(Q) \cdot S^{\frac{n(n-1)}{2}}$ for some constant $C_n(Q)$, we instead study the {\bf total non-archimedean mass} $T_n(S)$ defined by 
$$
T_n(S) := \beta_\infty(Q) \cdot \mathrm{TMass}_n(S) 
=
\sum_{\substack{[Q] \text{ with } \\ \det(Q) = S}} 2 \cdot \prod_p \beta_p(Q)^{-1},
$$
and the {\bf primitive total non-archimedean mass}
$T^*_n(S)$ defined by instead summing over classes $[Q]$ of primitive integer-valued forms with $\det(Q) = S$.


Our main results, stated for simplicity in the special case of positive definite forms over $F=\Q$ with $n$ odd, show that 

\begin{thm}
When $n$ is odd, the formal Dirichlet series 
$$
D_{T^*; n}(s) := \sum_{S\in\N} \frac{T^*_n(S)}{S^{s}}
$$
can be written as a sum
$$
D_{T^*; n}(s) = \kappa_n \cdot \[ D_{A^*; n}(s) + D_{B^*; n}(s) \] 
$$
of two Eulerian Dirichlet series $D_{A^*; n}(s)$ and $D_{B^*; n}(s)$, with some explicit constant $\kappa_n$  
(Corollaries \ref{Cor:Formal_Dirichlet_M} and \ref{Cor:Formal_Dirichlet_decomp_n_odd}).
\end{thm}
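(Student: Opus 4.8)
The plan is to start from Siegel's mass formula, which rewrites $T^*_n(S)$ as a sum over \emph{genera} of primitive positive definite forms of determinant $S$, each genus contributing $2\prod_p \beta_p(Q)^{-1}$. Since $\beta_p$ depends only on the local genus at $p$, the first task is to replace the sum over global genera by a sum over compatible tuples of local genera. By the local--global principle for genera, a tuple $(\mathcal{G}_p)_p$ of primitive local genera of rank $n$ — almost all unimodular, with $\mathcal{G}_p$ of the prescribed determinant valuation at $p$ — arises from a global genus of determinant $S$ if and only if the local Hasse--Witt invariants $c_p(\mathcal{G}_p) \in \{\pm 1\}$ satisfy Hilbert reciprocity $\prod_p c_p = c_\infty$, where $c_\infty$ is fixed by positive-definiteness. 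For $n$ odd the determinant square class at each place is pinned down by $S$ alone, so the Hasse invariant is the only remaining local degree of freedom and this reciprocity relation is the sole global constraint.

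Next I would encode the reciprocity constraint through the elementary indicator identity
$$
\mathbb{1}\!\[\textstyle\prod_p c_p = c_\infty\] = \tfrac12\(1 + c_\infty \textstyle\prod_p c_p\),
$$
valid because each $c_p = \pm 1$. Inserting this into the interchanged double sum
$$
D_{T^*;n}(s) = 2 \sum_{(\mathcal{G}_p)} \mathbb{1}\!\[\textstyle\prod_p c_p = c_\infty\] \prod_p \beta_p(\mathcal{G}_p)^{-1}\, p^{-v_p(\det \mathcal{G}_p)\, s}
$$
splits the constrained sum over tuples into two \emph{unconstrained} sums. Because the summand is now a product of purely local factors, each unconstrained sum factors as an Euler product over primes: the ``$1$'' term produces
$$
D_{A^*;n}(s) = \prod_p \(\sum_{\mathcal{G}_p} \beta_p(\mathcal{G}_p)^{-1}\, p^{-v_p(\det \mathcal{G}_p)\, s}\),
$$
and the signed term produces the analogous product with each local factor weighted by $c_p(\mathcal{G}_p)$, namely $D_{B^*;n}(s)$. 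The surviving scalars — the global $2$ of Siegel's formula, the $\tfrac12$ of the indicator, the fixed sign $c_\infty$, and the archimedean normalization $\beta_\infty$ — combine into the explicit constant $\kappa_n$.

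The remaining work is to compute the local factors. For each $p$ one sums over primitive local genera $\mathcal{G}_p$ of rank $n$, organized by their Jordan splittings, inserting $\beta_p(\mathcal{G}_p)^{-1}$ (and, for $D_{B^*;n}$, the local Hasse sign $c_p$). These are geometric-type series in $p^{-s}$ yielding closed-form rational functions, which gives the Eulerian structure asserted in the theorem; the oddness of $n$ keeps the determinant and Hasse conventions uniform across the primes and fixes $c_\infty$ unambiguously.

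The main obstacle will be two-fold. First, one must justify carefully the bijection between global genera of determinant $S$ and tuples of local genera subject to Hilbert reciprocity, tracking primitivity and the determinant square class at every place (including the delicate prime $p=2$) so that the global condition really collapses to the single sign relation $\prod_p c_p = c_\infty$. Second, the explicit evaluation of the local factors — again hardest at $p=2$, where Jordan decompositions and local densities are most intricate — is the genuinely computational heart of the argument. The interchange of the summation over $S$ with the summation over local data should be read as an identity of formal Dirichlet series / Euler products, so convergence does not intervene, but the bookkeeping of which local genera are admissible must be exact.
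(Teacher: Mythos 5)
Your overall strategy is the paper's own: pass from global genera to tuples of local genera via the Hasse principle and the local--global principle for lattices (so that the only surviving global constraint is Hilbert reciprocity $\prod_p c_p = c_\infty$), split that constraint with the identity $\tfrac12\big(1+c_\infty\prod_p c_p\big)$ --- which is exactly the paper's polynomial identity lemma specialized to $N=2$ --- and then evaluate the resulting local sums Jordan block by Jordan block. Two points, however, are genuine gaps rather than bookkeeping.

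First, the Euler factor you write at $p$, namely $\sum_{\mathcal{G}_p}\beta_p(\mathcal{G}_p)^{-1}p^{-v_p(\det\mathcal{G}_p)s}$, indexes local genera only by the valuation of their determinant, but $T^*_n(S)$ constrains the full local squareclass $\det_H(\mathcal{G}_p)=S\in\Q_p^\times/(\Q_p^\times)^2$, unit part included. The unit part of $S$ at $p$ is $S/p^{v_p(S)}$, which depends on the other primes dividing $S$, so it is not a local datum; expanding your product therefore overcounts by including local genera whose determinant has the right valuation but the wrong unit class. The paper resolves this with its ``distinguished squareclass'' formalism together with the normalized local squareclass invariance theorem: for $n$ odd the local sums $A^*_{\p;n}$ and $B^*_{\p;n}$ are invariant under unit scalings $S_\p\mapsto u_\p S_\p$ (up to a Hilbert symbol $(u_\p,u_\p)_\p$ when $n\equiv 3\pmod 4$, which is trivial at odd $p$), so the ambiguity is harmless --- but this is precisely where the oddness of $n$ does real work, beyond ``keeping conventions uniform,'' and it must be proved. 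Second, your local factors are not $1$ at almost all primes: at a good prime the unique unimodular genus contributes $\beta_p^{-1}=\big((1-p^{-2})(1-p^{-4})\cdots(1-p^{-(n-1)})\big)^{-1}$, so the coefficientwise meaning of your Euler product involves an infinite (convergent but nontrivial) generic product. The paper normalizes each local term by the generic density $\beta_{n,\p}(\widetilde{S_\p})$ and pulls the product $\beta_{n,\mathbf{f}}^{-1}(\widetilde S)$ out front; that this prefactor is a single constant absorbable into $\kappa_n$, rather than a function of the squareclass of $S$, is again special to $n$ odd.
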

\noindent
When $n$ is even this sum of Dirichlet series is slightly more complicated, and naturally gives such a decomposition where the overall constant $\kappa_n$ depends on the squareclass $t\N^2$ that $S$ lies within.
We also show that 
\begin{thm}
The Euler factors at $p$ in the Dirichlet series $D_{A^*; n}(s)$ and $D_{B^*; n}(s)$ above are each 
rational functions in $p^{-s}$ (Theorem \ref{Thm:Rationality_of_Euler_factors}, Corollary \ref{Cor:Rationality_of_AB_Euler_factors}).
\end{thm}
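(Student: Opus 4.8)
The plan is to show that each local Euler factor, as a function of $p^{-s}$, arises as a sum over the finitely many "local data types" that contribute to the total non-archimedean mass at $p$, and that each such contribution is itself a rational function of $p^{-s}$. The starting point is Siegel's mass formula, which expresses the total non-archimedean mass as a sum of products of local densities $\beta_p(Q)^{-1}$ over all primes $p$; the Eulerian structure established in the previous theorem (Corollary~\ref{Cor:Formal_Dirichlet_decomp_n_odd}) means it suffices to analyze a single place $p$ in isolation. At a fixed prime $p$, the local density $\beta_p(Q)$ depends only on the $\Z_p$-equivalence class of the quadratic lattice $Q \otimes \Z_p$, and these classes are governed by a finite combinatorial datum (the Jordan splitting of $Q$ over $\Z_p$): the scales $p^{e_i}$, the ranks, and the unit squareclasses/discriminants of the Jordan blocks, with the $p$-adic valuation of $\det(Q)$ recovered as $\sum_i e_i \cdot (\text{rank}_i)$.

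The key steps proceed as follows. First I would recall the explicit local density formula $\beta_p(Q)^{-1}$ as a function of the Jordan datum, separating the computation into the "generic" tame case (where the formula is a clean polynomial in $p$ and $p^{\deg}$) and the finitely many exceptional configurations; for $n$ odd over $\Q$ the place $p=2$ requires the special formula, but it remains a function of a bounded amount of local data. Second, I would organize the sum defining the local Euler factor of $D_{A^*;n}$ (respectively $D_{B^*;n}$) at $p$ as a sum over all admissible Jordan data $J$ at $p$, weighting each by the local density factor and tracking the contribution $p^{-s \cdot \ord_p \det(Q)}$; writing $x = p^{-s}$, the exponent of $x$ is the linear form $\sum_i e_i m_i$ in the block exponents $e_i$ and ranks $m_i$. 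Third, the crucial observation is that although there are infinitely many Jordan data (the scales $e_i$ range over unboundedly large integers), the number of distinct \emph{shapes} — i.e. the number of nonempty blocks and their internal structure — is bounded, and for a fixed shape the density factor depends on the exponents $e_i$ only through the differences between consecutive scales in a way that stabilizes once the gaps are large. This means the generating sum over the exponents decomposes into a finite union of geometric-type progressions in the variables $x^{m_i}$.

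Concretely, I would stratify the infinite sum by fixing the ordered list of block ranks $(m_1,\dots,m_r)$ and the internal squareclass/discriminant labels, and then summing over the allowed scale tuples $0 \le e_1 < e_2 < \cdots < e_r$ (or their analog). After substituting the explicit density formula, each such stratum becomes a multiple geometric series in the quantities $x^{m_i}$ and $p^{\pm 1}$ whose ratios are monomials in $x$; summing a geometric series in $x$ yields a rational function of $x$, and a finite sum of rational functions is rational. The rationality of $D_{A^*;n}$ and $D_{B^*;n}$ Euler factors then follows from that of the total, together with the explicit algebraic way the $A^*$/$B^*$ decomposition splits the local contributions (as furnished by the earlier corollaries).

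The main obstacle I anticipate is the exact bookkeeping of the local density formula across the different Jordan \emph{shapes}, and in particular verifying the claimed stabilization: that once the scale-gaps $e_{i+1}-e_i$ exceed a fixed bound, the density factor becomes purely multiplicative in a shift of the exponents, so that the tail of the sum is genuinely geometric. Making this precise — rather than merely plausible — requires a uniform statement about how $\beta_p(Q)^{-1}$ depends on the Jordan invariants, including the delicate cross-terms between adjacent Jordan blocks (the so-called "type I/II" and dyadic corrections). Handling $p=2$ uniformly, where the local density formula carries additional Arf-invariant and oddity data, is where I expect the argument to be most technical; but since the set of exceptional local behaviors at $2$ is still finite and the scale-dependence stabilizes for the same reason, the rationality conclusion is unaffected, only the explicit Euler factor is more elaborate.
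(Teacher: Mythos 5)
Your proposal follows essentially the same route as the paper: stratify the local sum by the finitely many Jordan block structures (resp.\ partial local genus symbols at $\p\mid 2$), decorate each with unit squareclasses to track the Hasse-invariant distribution feeding $A^*$ and $B^*$, observe that the local density contribution depends on the scale exponents only through an explicit multiplicative factor, and sum the resulting multiple geometric series in $q^{-s}$. The one point where the paper is sharper than your ``stabilization once the gaps are large'' heuristic is that it asserts exact multiplicativity, $c_J = \widetilde{c}_J\prod_i q^{\kappa_i\alpha_i}$ with $\kappa_i$ depending only on the block ranks (coming from the cross-product term in the local density formula), which makes the geometric-series decomposition immediate rather than requiring a separate finite-exceptions argument.
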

\noindent 
When $n=2$ we explicitly compute these Euler factors at all primes $p$ (Theorem \ref{Thm:Explicit_A_and_B_for_n=2}). Finally, we use these explicit local computations to recover Dirichlet's class number formula for imaginary quadratic fields $K/\Q$ (Theorem \ref{Thm:Analytic_class_number_formula}), and discuss some similarities between the total non-archimedean mass series $D_{T; n}(s)$ and modular forms of half-integral weight.

\smallskip 
We actually prove our main theorems over a general number field $F$, which causes us to make several technical distinctions that are blurred when $F=\Q$.  Over $F$ we see that not all lattices are free, (leading us to replace quadratic forms by the more general notion of quadratic lattices), the discriminant becomes an integral squareclass (or more accurately, a non-archimedean tuple of local integral squareclasses indexed by the prime ideals $\p$ of $F$), and there is no natural notion of how to associate a squareclass to an ideal (leading us to define the notions of a ``formal squareclass series'' and of a ``family of distinguished squareclasses'').  Also, substantial technical work must be done to ``normalize'' the local densities within a squareclass over $F$ (where over $\Q$ we have a natural notion of the ``squarefree part'' of a number and a simpler local theory of genera) and to establish an analytic class number formula generalizing Dirichlet's class number formula to the setting of CM-extensions $K/F$.  
All of our main results include the freedom to discuss quadratic lattices of any fixed signature (which is now a vector), and also allow one to specify their Hasse invariants at finitely many primes $\p$.

\smallskip
One important motivation for studying the total mass of given determinant comes from the arithmetic implications of the ``discriminant-preserving" correspondences introduced by Bhargava \cite{Bh1, Bh2, Bh3, Bh4} that generalize Gauss composition for binary quadratic forms.
Also, masses of certain ternary quadratic forms summed across several genera of fixed determinant (called $S$-genera) were studied in \cite{Berk-H-Jagy} to establish a family of ``$S$-genus identities'', though there the determinants were essentially squarefree.
A forthcoming paper \cite{Hanke_n_equals_3_masses} explicitly computes the local Euler factors for $D_{A^*; n}(S)$ and $D_{B^*; n}(S)$ when $n=3$, in preparation for an investigation of the growth of the $2$-parts of class groups of cubic fields \cite{Bh-Ha-Sh} jointly with M. Bhargava and A. Shankar.

\medskip
{\bf Acknowledgements:} The author would like to thank Manjul Bhargava for posing a question that led to this work, and MSRI for their hospitality during their Spring 2011 semester in Arithmetic Statistics.  The author would also like to warmly thank Robert Varley for his continuing interest in this work, and for several helpful conversations.
This work was completed at the University of Georgia between December 2009 and Summer 2011, and was partially supported by the NSF Grant DMS-0603976.

\subsection{Detailed Summary}

In this paper we study the ``primitive total non-archimedean mass'' $T^*_{\vec{\sigma}_\infty, \c_{\S}; n}(S)$ of rank $n$ quadratic lattices over a number field $F$ of a fixed Hessian determinant squareclass $S$, signature vector $\vec{\sigma}_\infty = (\sigma_v)$ for all real archimedean places of $F$, and specified Hasse invariants $c_\p$ at some finite set $\S$ of primes $\p$ of $F$. 
Our main interest is in understanding how the total non-archimedean mass varies as we vary its determanant squareclass $S$, and its behaviour as ``$S \rightarrow \infty$'' in various ways.
%
%

In \textsection2 we define local, global, and adelic notions of (integral and rational) squareclasses associated to genera of quadratic lattices, and describe some related structures and normalizations.  We also go to some effort to show that there is at most one local genus of integral quadratic forms associated to any normalized determinant squareclass, and to characterize the squareclasses that arise from quadratic lattices.

In \textsection3 we define and study the total non-archimedean mass, and show that it can be studied formally as a purely local object.

In \textsection4 we introduce the notion of a formal squareclass series to encode how the total non-archimedean mass varies with its determinant squareclass.  In this language we show that the associated formal squareclass series is a precise linear combination of two formal squareclass series admitting Euler product expansions, and determine how this linear combination depends on the signature and chosen Hasse invariants.
One of these terms is independent of our fixed signature and Hasse invariant conditions, while the other oscillates (as a sum or difference) depending on them.  We regard the first term as the ``main term'', and the second as the ``error term'' or ``secondary term''.  This is particularly interesting because one usually regards local densities via the Siegel-Weil formula as themselves contributing the ``main term'' of the theta series, so the fact that these exhibit further structure is a very interesting feature.  
We also give a way of associating a (family of) formal Dirichlet series to these formal squareclass series, and prove a similar structure theorem in that context.  
Once this structural result is established, the main question is to understand the rationality of the Euler factors of each of these series.  For this we give a purely local formulation of these Euler factors as a weighted sum over local genera of quadratic forms of with a fixed determinant squareclass.   This formulation allows one in principle to use the theory of $\p$-adic integral invariants for quadratic forms and $\p$-adic local density formulas to explicitly compute these factors in any case of interest, though such computations rapidly become non-trivial.

In \textsection5 we use aspects of the theory of local genus invariants to show that the Euler factors previously considered can be written as rational functions, and when $n$ is odd we show that these Euler factors can be fully understood as we vary our local normalized squareclass (and that this dependence is almost constant).

In \textsection6 we establish a precise connection between our total non-archimedean mass and the mass of quadratic lattices defined as a weighted sum over classes.  When the class number $h(\OF)=1$  we show that formal Dirichlet series made from these masses decompose nicely.

Finally, in \textsection7 we perform an case-by-case analysis with local genus invariants to exactly compute the Euler factors whose rationality was previously established.  At primes $\p\mid 2$ we use the 
``train/compartment'' formalism of Conway \cite[p381]{CS-book} to describe the $2$-adic genus invariants, and the (stated but not proven) mass formula of Conway and Sloane \cite{CS} to compute the local density $\beta_{Q,\p }(Q)^{-1}$ for primes $\p\mid2$.  Because of this, our computations are valid for any number field $F$ where $p=2$ splits completely.
%
To treat number fields with different splitting behavior at $p=2$ by these methods one must give both a good theory of integral invariants and a formula for computing local densities.  There is a rather complicated theory of dyadic integral invariants that has been worked out by O'Meara \cite{OMeara:1955rr, OMeara:1957cr}, but these papers have received little attention in the literature.
The theory of explicit dyadic masses for arbitrary quadratic forms has not been fully worked out.

We conclude in \textsection8 by using results of Kneser and of \textsection7 to establish an analytic class number formula for $h(\OK)$, where $K/F$ is a CM extension of number fields and $p=2$ splits completely in $F$.  We also remark how this formula can be generalized to allow quadratic orders, and show it is compatible with the more traditional class number formula arising from ratios of Dedekind zeta functions.

\subsection{Notation}

Throughout this paper we let $\Z := \{\cdots, -2, -1, 0, 1, 2, \cdots\}$ denote the integers,  $\Q$ the rational numbers, $\R$ the real numbers, $\C$ the complex numbers, and $\N$ the natural numbers (i.e. positive integers).  We also denote by $\Z_{\geq 0}$ the non-negative integers.  
We denote the units (i.e. invertible elements) of a ring $R$ by $R^\times$, and let $\mathrm{char}(K)$ denote the characterisitic of a field $K$.
We also let $\Mat_{m \times n}(R)$ denote the ring of $m \times n$ matrices over $R$, and set $\GL_n(R) := \Mat_{n \times n}(R)^\times$.
For any object, we let the subscript $\bullet$ refer to an unspecified set of extra parameters for the object.
We write $A \equiv B_{(m)}$ to mean that the elements/sets given by $A$ and $B$ are equal in the ring $R/mR$, where the ambient ring $R$ is implicitly known.

\smallskip
{\bf Number Fields:}
We let $F$ denote a number field with ring of integers $\OF$, $v$ is a place of $F$, $\p$ is a prime ideal (or more simply, a {\bf prime}) of $F$, $F_v$ is the completion of $F$ at $v$, $\Ov$ is the ring of integers of $F_v$ (which is $F_v$ itself when $v$ is archimedean).  
We let $[F:\Q]$ denote the absolute degree of $F$ (giving its dimension as a $\Q$-vectorspace) and let $\Delta_F \in \Z$ denote the absolute discriminant of $F$.
For a prime $\p$, we denote its {\bf residue field} at $\p$ by $k_\p := \Op/\p\Op$, which is a finite field of size $q := |\Op/\p\Op|$.
We denote by $\infty$ the archimedean place of the rational numbers $\Q$, and write $v\mid\infty$ (resp. $v\mid\infty_\R$) to denote that $v$ is archimedean (resp. real archimedean).  We also identify the two conjugate embeddings $v$ where $F_v = \C$. We denote the set of non-archimedean (finite) places (i.e. primes $\p$) of $F$ by $\mathbf{f}$.  
For any finite set $\T \subset \mathbf{f}$ we let $I^{\T}(\OF)$ denote the set of invertible (integral) ideals of $\OF$, relatively prime to all $\p\in\T$.  We also adopt the general convention that quantities denoted by Fraktur letters (e.g. $\p, \mathfrak{a}, \n, \s, \v$, etc.) will be (possibly fractional) ideals of $F$.

We denote by $\A_{F, \mathbf{f}}^\times := \prod'_{\p \in \mathbf{f}} F_\p^\times$ the non-archimedean ideles of $F$, where the restricted direct product $\prod'$ requires that all but finitely many components lie in $\Op^\times$.  For convenience we will often write products $\prod_{\p\in\mathbf{f}}$ more simply as unquantified products $\prod_\p$, and similarly write $\prod_v$ for a product over all places $v$ of $F$.

When $F = \Q$, we define the {\bf conductor} $\Cond(\chi) \in \N$ of a Dirichlet character $\chi: (\Z/m\Z)^\times \rightarrow \C^\times$, as the smallest $f \in \N$ so that $\chi$ factors through a character on 
$(\Z/f\Z)^\times$.  We also say that $D \in \Z$ is a {\bf fundamental discriminant} if $D$ is the absolute discriminant of a quadratic number field.

We refer to an extension of number fields $K/F$ of degree $[K:F]=2$ where $F$ is totally real and $K$ is totally complex as a {\bf CM extension}, where CM here is an abbreviation for ``complex multiplication''.

\smallskip
{\bf Squareclasses:}
For any abelian group $G$ with subgroup $H$, we let $\SqCl(G,H):= G / (H^2)$ denote the group of {\bf $H$-squareclasses of $G$}, and write $\SqCl(G) := \SqCl(G,G)$ for the group of {\bf squareclasses of $G$}.
We will frequently refer to the group of {\bf (integral) non-archimedean squareclasses} 
$\SqAfInt$ where $U_\mathbf{f} := \prod_\p \Op^\times$, and let $S_\p$ denote the component of $S$ at $\p$.  
We define the {\bf valuation at $\p$} of  a non-archimedean squareclass $S$ by the expression $\ord_\p(S) =  \ord_\p(S_\p) := \ord_\p(\al)$ when  $S_\p = \al(\Op^\times)^2$ for some $\al \in F_\p^\times$, and we
define its {\bf support} $\Supp(S)$ 
as the set of primes $\p$ of $F$ where $\ord_\p(S) \neq 0$.

\smallskip
{\bf Basic Quadratic Objects:}  
For any $n \in \N$ we define an $n$-dimensional {\bf quadratic space} over a field $K$ (assuming that $\mathrm{char}(K) \neq 2$) to be a pair $(V, Q)$ where $V$ is an $n$-dimensional vectorspace 
(which we assume is equipped with a basis $\mathcal{B}$) 
over $K$ and $Q$ is a quadratic form on $V$ (relative to $\mathcal{B}$).  
Given a quadratic form $Q(x_1, \cdots, x_n)$ we define its Hessian and Gram matrices respectively as $H = (h_{ij}) :=  (\frac{\partial^2 Q}{\partial x_i \partial x_j})$ and Gram matrix $G := \frac{1}{2}H$.
We respectively define the Gram and Hessian determinants $\det_G(Q)$ and $\det_H(Q)$ of the quadratic space $(V,Q)$ as the squareclass in $\SqCl(K^\times)$ given by taking the determinant of the Gram and Hessian matrices of $Q$.  We say that $(V, Q)$ is {\bf non-degenerate} if the Gram determinant $\det_G(Q) \neq 0$ (which happens iff $\det_H(Q) \neq 0$ since $\mathrm{char}(K)\neq 2$).

We say that $L$ is a {\bf quadratic $R$-lattice} for some ring $R$  if $R$ is a subring of $F$,  $L$ is a finitely generated $R$-submodule of some quadratic space $(V,Q)$ over $F$, and $L \otimes_R F = V$.  Note that a quadratic lattice $L$ inherits the values of its ambient quadratic space, and for any set $\T$ we say that $L$ is {\bf $\T$-valued} if $Q(L) \subset \T$. We say that a quadratic ($\OF$- or $\Op$-)lattice $L$ in {\bf primitive} if $L$ is ($\OF$- or $\Op$-)valued and any scaling $Q \mapsto c \cdot Q$ of the ambient quadratic space for which 
$L$ is still ($\OF$- or $\Op$-)valued must have $c \in$ ($\OF$ or $\Op$).

Given a quadratic $\OF$-lattice $L$, we denote its localizations $L_v := L \otimes_{\OF} \Ov$ in the quadratic spaces $(V_v := \otimes_{F} F_v, Q_v)$ over $F_v$.  We let $\s(L)$ denote its {\bf scale ideal} (generated locally by the entries of its Gram matrix in a basis for $L_\p$), $\n(L)$ its {\bf norm ideal} (generated by its values), its {\bf volume ideal} $\v(L)$ (generated locally by $\det_G(L_\p)$ relative to a basis for $L_\p$), and its {\bf norm group} $\mathcal{G}(L)$ (generated by its values).

\smallskip
{\bf Decorated Quadratic Objects:}  
We define 
$\mathbf{Gen}^*(S, \vec{\sigma}_\infty, \c_{\S}; n)$, $\mathbf{Cls}^*(S, \vec{\sigma}_\infty, \c_{\S}; n)$ and $\mathbf{Cls}^{*,+}(S, \vec{\sigma}_\infty, \c_{\S}; n)$ respectively to be the set of genera, classes, and proper classes $G$ of primitive $\OF$-valued rank $n$ quadratic $\OF$-lattices  with fixed signature $\sigma_v(G) = (\vec{\sigma}_\infty)_v$ at all places $v \mid \infty$, fixed Hasse invariants $c_\p(G) = (\c_\S)_\p$ at the finitely many primes $\p\in\S$, and Hessian determinant $\det_H(G) = S$.

We define $\mathbf{Gen}^*_{\p}(S, \ve; n)$ as the set of local genera $G_\p$ of primitive $\Op$-valued rank $n$ quadratic forms with Hessian determinant $\det_H(G_\p) = S$ and Hasse invariant $c_\p(G_\p) = \ve$.
We also let 
$\mathbf{Gen}^*_{\mathbf{f}}(S, \ve_\infty, \c_{\S}; n)$ be the set of tuples $(G_\p)$ of 
$G_\p \in \mathbf{Gen}^*_{\p}(S_\p, \{\pm1\}; n)$ over all primes $\p$ 
where we require that
$\det_H(G_\p) \in \Op^\times$ for all but finitely many primes $\p$, 
and also $\prod_\p c_\p(G_\p) = \ve_\infty$.  (Note that here $c_\p(G_\p) = 1$ for all but finitely many $\p$ since the Hilbert symbol $(x,y)_\p = 1$ when $\p\nmid 2$ and $x,y\in \Op^\times$.)



\section{Facts about local genera of integer-valued quadratic forms}

In this section we are interested in defining and understanding the Hessian determinant squareclasses of (non-degenerate) $\OF$-valued quadratic $\OF$-lattices.  We classify their associated ideals, show locally that when the associated ideal is maximal there is a unique genus of quadratic forms giving rise to this squareclass, and finally establish exactly which squareclasses arise from quadratic lattices.
In later sections these observations will allow us to define certain ``normalized'' local densities, and to sensibly parametrize Hessian determinant squareclasses by integral ideals.


\begin{defn}
Given a quadratic $\OF$-lattice $L$, we define its {\bf (non-archimedean) Hessian determinant squareclass} $\det_H(L)$ as the squareclass $S \in \SqCl(\A_{F, \mathbf{f}}^\times, U_\mathbf{f})$ so that $S_\p = \det_H(Q_\p)$ where $Q_\p$ is the quadratic form associated with the quadratic lattice $L_\p$ with respect to some choice of independent generators  for $L_\p$ as a free $\Op$-module.  Note that $S_\p$ does not depend on this choice of generators, and that $\ord_\p(S_\p)=0$ for all but finitely many primes $\p$.
\end{defn}

In order to better understand the Hessian determinant squareclass $\det_H(L)$, we first study the relationships between local and global squareclasses under both rational and integral equivalence.

\begin{lem}
We can express the integral and rational non-archimedean squareclasses as restricted direct products of local integral and rational squareclasses by
$$
\SqAfInt
 	 = \sideset{}{'} \prod_\p \SqFpInt 
$$
$$
\SqAf
	 = \sideset{}{'} \prod_\p \SqFp 
$$
where both restricted direct products $\prod'_p$ are with respect to the family of open subgroups $\SqOp$,
using the inclusion 
$\SqOp  \hookrightarrow \SqFp$
in the second product.
\end{lem}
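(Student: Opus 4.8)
The plan is to write down the evident component-wise maps in both cases and verify that they are isomorphisms; the only real content is bookkeeping the restricted-product structure correctly. For the first identity I would begin from the definition $\SqAfInt = \A_{F, \mathbf{f}}^\times/(U_\mathbf{f})^2$, and observe that since $U_\mathbf{f} = \prod_\p \Op^\times$ is the \emph{full} (unrestricted) product, we have $(U_\mathbf{f})^2 = \prod_\p (\Op^\times)^2$. The map $(\alpha_\p) \mapsto (\alpha_\p (\Op^\times)^2)_\p$ then sends an idele to the tuple of its local classes in $\SqFpInt = F_\p^\times/(\Op^\times)^2$. Because any idele satisfies $\alpha_\p \in \Op^\times$ for all but finitely many $\p$, each such local class lies in the image of $\Op^\times$, i.e. in $\SqOp$, so the image lands in the restricted direct product $\sideset{}{'}\prod_\p \SqFpInt$ taken with respect to the subgroups $\SqOp$. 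Injectivity is immediate: if every $\alpha_\p \in (\Op^\times)^2$ then $(\alpha_\p) \in (U_\mathbf{f})^2$. Surjectivity follows by lifting each local squareclass to a representative in $F_\p^\times$, choosing a unit representative at the cofinitely many primes where the target class lies in $\SqOp$, so that the resulting tuple is an idele. Here the inclusion $\SqOp \hookrightarrow \SqFpInt$ is the evident one induced by $\Op^\times \hookrightarrow F_\p^\times$, and it is automatically injective since both quotients share the denominator $(\Op^\times)^2$.

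For the second identity I would first verify that the asserted inclusion $\SqOp \hookrightarrow \SqFp$ is genuinely injective, which is exactly what makes the restricted product on the right well-defined. This is a one-line valuation argument: if $u \in \Op^\times$ becomes a square in $F_\p^\times$, say $u = \gamma^2$, then $0 = \ord_\p(u) = 2\,\ord_\p(\gamma)$ forces $\gamma \in \Op^\times$, so $\Op^\times \cap (F_\p^\times)^2 = (\Op^\times)^2$ and the induced map on squareclasses is injective. The rest proceeds exactly as before, via $(\alpha_\p) \mapsto (\alpha_\p (F_\p^\times)^2)_\p$ into $\sideset{}{'}\prod_\p \SqFp$ restricted with respect to $\SqOp$, with surjectivity again handled by lifting.

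The one point that requires genuine care — and which I regard as the main (if modest) obstacle — is that in the second case the subgroup being quotiented out, $(\A_{F, \mathbf{f}}^\times)^2$, is itself a restricted direct product (squares of ideles are units at cofinitely many primes) rather than the full product $\prod_\p (F_\p^\times)^2$. Concretely, injectivity requires showing that an idele which is locally a square at every prime is in fact the square of an idele: given $\alpha_\p \in (F_\p^\times)^2$ for all $\p$, the injectivity established above shows that at the cofinitely many primes with $\alpha_\p \in \Op^\times$ we actually have $\alpha_\p \in (\Op^\times)^2$, so the local square roots $\gamma_\p$ can be taken in $\Op^\times$ there, making $(\gamma_\p)$ a genuine idele with $(\gamma_\p)^2 = (\alpha_\p)$. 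Assembling these two component-wise isomorphisms then yields the two displayed identities.
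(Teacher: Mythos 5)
Your proof is correct and follows essentially the same route as the paper's (componentwise restriction maps, with the restricted-product structure tracked via the subgroups $\SqOp$). In fact you are more careful than the paper on the one genuinely delicate point — that $(\A_{F,\mathbf{f}}^\times)^2$ is itself only a restricted product, so injectivity in the second identity needs the observation $\Op^\times \cap (F_\p^\times)^2 = (\Op^\times)^2$ to choose unit square roots at cofinitely many primes — which the paper's two-line sketch passes over.
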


\begin{proof}
The restricted direct product with respect to the subgroups $\SqOF$ follow from the restricted product defining of $\A_{F, \mathbf{f}}^\times$, and the local squareclass group factors $\SqCl(F_\p^\times, \cdot)$ are obtained by looking at the surjective restriction map to the component at any given prime $\p$, since $U_\mathbf{f} \cap F_\p^\times  = \Op^\times$ and $\A_{F, \mathbf{f}}^\times \cap F_\p^\times  = F_\p^\times$.
\end{proof}

We notice that by passing from integral to rational equivalence gives {\bf rational reduction maps}, denoted by $\rho_{*}$, giving the commutative diagram
\begin{equation}  \label{Eq:Rational_reduction}
\begin{tikzpicture}
[baseline=(current bounding box.center)]
[description/.style={fill=white,inner sep=2pt}]
\matrix (m) [matrix of math nodes, row sep=3em,
column sep=2.5em, text height=1.5ex, text depth=0.25ex]
{ 
\SqFInt &  \SqAfInt \\
\SqF & \SqAf \\
};
\path[->,font=\scriptsize]
(m-1-1) edge node[auto] {$\Delta$} (m-1-2)
(m-2-1) edge node[auto] {$\Delta$} (m-2-2);
\path[->>] (m-1-2) edge node[auto] {$ \rho_\mathbf{f}$} (m-2-2);
\path[->>] (m-1-1) edge node[auto] {$ \rho$} (m-2-1);
\end{tikzpicture}
\end{equation}
where $\Delta$ denotes the diagonal map $x \mapsto (x, x, \cdots)$.
We also can realize the non-archimedean rational reduction map $\rho_\mathbf{f}$ as the product 
$\rho_\mathbf{f} = \prod_\p \rho_\p$ 
of the local rational reduction maps $\rho_\p: \SqFpInt \twoheadrightarrow \SqFp$.

\begin{defn}
We say that a non-archimedean integral squareclass $S \in \SqCl(\A_{F, \mathbf{f}}^\times, U_\mathbf{f})$ is {\bf globally rational} if its rational reduction $\rho_\mathbf{f}(S)$ is in the image of $\SqF$ in (\ref{Eq:Rational_reduction}).
\end{defn}

We begin by studying the possible valuations attained by the Hessian determinant squareclasses, which we phrase in the language of ideals.

\begin{defn}
For any non-archimedean integral squareclass $S \in  \SqCl(\A_{F, \mathbf{f}}^\times, U_\mathbf{f})$ 
we define its {\bf associated (valuation) ideal} $\mathfrak{I}(S)$ by 
$$
\mathfrak{I}(S) := \prod_\p \p^{\ord_\p(S_\p)}.
$$
This product is finite since $S_\p \subseteq \Op^\times$ for all but finitely many primes $\p$.
\end{defn}

\begin{lem} \label{Lem:Hessian_det_ideals}
Suppose that $L$ is a rank $n$  $\OF$-valued quadratic $\OF$-lattice.  Then 
the associated ideal 
$\I(\det_H(L)) \subseteq \OF$ and 
for each $n$,
the sum $\mathfrak{h}_n$ of all such ideals
is 
\begin{align*}
\mathfrak{h}_n 
:= 
\biggl\{ 
\textstyle{\prod_\p \det_H}(L_\p) \Op 
\biggm\vert
\begin{aligned}
&\text{$L_\p$ is an $\Op$-valued } \\
&\text{rank $n$ quadratic form} 
\end{aligned}
\biggr\} 
=
\begin{cases}
\OF  & \quad \text{if $n$ is even,} \\ 
2\OF  & \quad \text{if $n$ is odd.} \\ 
\end{cases}
\end{align*}
\end{lem}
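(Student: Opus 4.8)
The plan is to reduce everything to a local computation of the $\p$-adic valuation of the Hessian determinant, then to bound this valuation and exhibit forms attaining the bound. First, the integrality $\I(\det_H(L)) \subseteq \OF$ is immediate: in a basis of $L_\p$ the Hessian matrix $H_\p$ has all entries in $\Op$, since its diagonal entries are $2Q_\p(e_i) \in 2\Op$ and its off-diagonal entries are $Q_\p(e_i+e_j) - Q_\p(e_i) - Q_\p(e_j) \in \Op$ as $L$ is $\OF$-valued. Hence $\det(H_\p) \in \Op$, so $\ord_\p(\det_H(L)) = \ord_\p(\det(H_\p)) \ge 0$ for every prime $\p$, and every term $\I(\det_H(L))$ is an integral ideal.

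The heart of the matter is the lower bound, which I phrase as: when $n$ is odd, every $\OF$-valued rank $n$ lattice $L$ satisfies $\ord_\p(\det(H_\p)) \ge \ord_\p(2)$ at every prime. This is vacuous for $\p \nmid 2$, so fix $\p \mid 2$ and reduce $H_\p$ modulo $2\Op$. The diagonal entries die while symmetry is preserved, so the reduction $\bar H_\p$ is an \emph{alternating} matrix over the commutative ring $\Op/2\Op$. I will then invoke the ring-theoretic fact that an odd-sized alternating matrix has vanishing determinant over any commutative ring: the determinant of a generic alternating matrix is a $\Z$-polynomial in its entries that already vanishes identically over $\Q$ (there skew-symmetry in odd size forces $\det A = (-1)^n \det A = -\det A$), so it is the zero polynomial and specializes to $0$ over $\Op/2\Op$. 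Thus $\det(H_\p) \equiv 0 \pmod{2\Op}$, giving the claimed bound; in particular every $\I(\det_H(L))$ is contained in $2\OF$ when $n$ is odd, and trivially in $\OF$ when $n$ is even.

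It remains to produce forms attaining these bounds, which pins down the sum $\mathfrak{h}_n$. For $n = 2m$ even, the orthogonal sum of $m$ hyperbolic planes $\sum_{i=1}^m x_i y_i$ is $\OF$-valued with Hessian determinant the global unit $(-1)^m$, so its associated ideal is $\OF$; since every term of the sum is already contained in $\OF$, we conclude $\mathfrak{h}_n = \OF$. For $n = 2m+1$ odd, the $\OF$-valued form $\sum_{i=1}^m x_i y_i + z^2$ has block-diagonal Hessian with determinant $(-1)^m\cdot 2$, so its associated ideal is exactly $2\OF$; combined with the containment of every term in $2\OF$ from the previous paragraph, the smallest ideal containing all the terms is $\mathfrak{h}_n = 2\OF$.

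The step I expect to be the main obstacle is the sharp lower bound in the odd case. A naive reduction modulo $\p$ only shows that $\bar H_\p$ is alternating over the residue field and yields $\ord_\p(\det(H_\p)) \ge 1$, which is too weak at ramified dyadic primes where $\ord_\p(2) > 1$. Working modulo $2\Op$ instead — a ring that may carry nilpotents — is what upgrades this to the sharp $\ge \ord_\p(2)$, and it is precisely here that one needs the vanishing of odd alternating determinants over an arbitrary commutative ring rather than merely over a field.
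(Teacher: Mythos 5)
Your proof is correct, and it reaches the key divisibility by a genuinely different (though closely related) mechanism than the paper. The paper works directly with the Leibniz expansion of the symmetric, even-diagonal Hessian matrix over $\Op$: every permutation with a fixed point contributes a term containing a diagonal entry in $2\Op$, and the fixed-point-free permutations pair off with their inverses, which are distinct from them precisely when $n$ is odd (no fixed-point-free involutions exist in odd size), so every term of the determinant lands in $2\Op$; the even case is settled by the permutation matrix of the involution $i\mapsto n-i+1$, which is essentially the Hessian of your sum of hyperbolic planes. You instead reduce the Hessian modulo $2\Op$, observe that the reduction is alternating (zero diagonal, and symmetry becomes skew-symmetry in characteristic $2$), and invoke the vanishing of odd-size alternating determinants over an arbitrary commutative ring, proved via the characteristic-zero polynomial identity $\det A=-\det A$. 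Both routes establish the same sharp local statement $\det(H_\p)\in 2\Op$ — your observation that one must work modulo $2\Op$ rather than modulo $\p$ to handle ramified dyadic primes is exactly right, and the paper's argument achieves the same sharpness because the diagonal entries lie in $2\Op$, not merely in $\p$. Your packaging is somewhat more modular (it isolates a standard fact about alternating matrices, essentially the Pfaffian dichotomy), and it has the additional merit of exhibiting the explicit odd-rank form $\sum_{i}x_iy_i+z^2$ attaining the ideal $2\OF$, a step the paper leaves implicit.
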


\begin{proof}
The localization $L_\p:= L\otimes_{\OF} \Op$ of $L$ is a free $\Op$-module, and so by taking a basis for $L_\p$ we can represent it by an $\Op$-valued quadratic form.  
By taking an $\Op$-basis of $L_\p$, we obtain an $\Op$-valued quadratic form whose Hessian matrix is in $\Mat_{n\times n}(\Op)$ with even diagonal, hence $\det_H(L_\p) \in \Op$.  The Leibniz  formula 
$$
\det(A) = \sum_{\sigma\in S_n} \sgn(\sigma) \prod_{i=1}^n a_{i \sigma(i)}
$$
for a square matrix $A = (a_{ij})$ with even diagonal tells us that the terms from permutations $\sigma$ with no fixed points determine how $\mathfrak{h}_n$ sits between $\OF$ and $2\OF$.   When $A$ is symmetric the terms from $\sigma$ and $\sigma^{-1}$ are the same, hence they contribute twice when $\sigma \neq \sigma^{-1}$.  
Since a fixed-point-free involution $\sigma \in S_n$ exists $\iff n$ in even, we see that $\mathfrak{h}_n  = 2\OF$ when $n$ in odd.
If $n$ is even then we can choose the involution $\sigma(i) := n-i + 1$ and define the matrix $A = (a_{ij}) :=(\delta_{i \sigma(i)})$ which has $\det(A) \in \{\pm1\}$, so $\mathfrak{h}_n  = \OF$.
\end{proof}

\begin{thm}[Uniqueness of Normalized Local Genera] \label{Thm:Uniqueness_for_normalized_QF}
Suppose that $n \in \N$ and that $S_\p \in \SqCl(F_\p^\times, \Op^\times)$ with $\ord_\p(S_\p) = \ord_\p(\mathfrak{h}_n)$.  Then there exists at most one local genus $G_\p$ of primitive $\Op$-valued quadratic forms in $n$ variables with (Hessian) determinant $\det_H(G_\p) = S_\p$. 
\end{thm}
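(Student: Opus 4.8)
The plan is to use the fact that over the local ring $\Op$ a genus of quadratic forms is the same thing as an $\Op$-isometry class, so that the statement becomes: any two primitive $\Op$-valued rank $n$ lattices $L,L'$ with $\det_H(L)=\det_H(L')=S_\p$ and $\ord_\p(S_\p)=\ord_\p(\mathfrak{h}_n)$ are isometric over $\Op$. I would prove this by showing that the minimality hypothesis forces $L$ to be a \emph{maximal} integral lattice, and then invoking the uniqueness of maximal lattices inside a fixed quadratic space. Throughout I work locally and write $V:=L\otimes_{\Op}F_\p$ for the ambient space.

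First I would record that primitivity is equivalent to $\n(L)=\Op$ (a lattice of norm $\p^{j}$ with $j\neq 0$ can be rescaled by a non-unit, so $\n(L)\subsetneq\Op$ contradicts primitivity, while $\n(L)=\Op$ gives it). I then claim the minimality hypothesis forces $L$ to be an $\Op$-maximal lattice of norm $\Op$. Indeed, suppose $M\supsetneq L$ is a strictly larger $\Op$-valued lattice of the same rank, with module index $[M:L]=\p^{k}$, $k\geq 1$. Comparing volume ideals gives $\ord_\p(\det_H(M))=\ord_\p(\det_H(L))-2k<\ord_\p(\mathfrak{h}_n)$, since $\det_H=2^{n}\det_G$ shifts valuations the same way as $\det_G$. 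But $M$ is itself an $\Op$-valued rank $n$ lattice, so by Lemma~\ref{Lem:Hessian_det_ideals} (whose conclusion $\mathfrak{h}_n=\OF$ or $2\OF$ says exactly that $\ord_\p(\mathfrak{h}_n)$ is the least Hessian-determinant valuation attained by any such lattice) we must have $\ord_\p(\det_H(M))\geq\ord_\p(\mathfrak{h}_n)$, a contradiction. Hence no such $M$ exists and $L$ is maximal. This step is robust and works uniformly at all primes.

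Next I would invoke the classical theorem that on a fixed regular quadratic space over a local field all maximal lattices of a given norm are isometric. Granting this, it only remains to show that the ambient space $V$ is determined up to isometry by the pair $(n,S_\p)$, i.e.\ that $L$ and $L'$ lie in isometric spaces $V\cong V'$. Both spaces have dimension $n$ and Hessian determinant $S_\p$, and a quadratic space over $F_\p$ is pinned down by its dimension, determinant, and Hasse invariant; so the entire theorem reduces to showing that the Hasse invariant of $V$ is forced by $(n,S_\p)$.

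This last reduction is the crux. The mechanism is that carrying a norm-$\Op$ lattice of \emph{minimal} Hessian determinant is a rigid integral condition that selects a single Hasse invariant: among the at most two spaces of dimension $n$ and determinant $S_\p$, the one with the ``wrong'' Hasse invariant represents only elements of positive valuation and therefore admits no norm-$\Op$ lattice at all (let alone one of minimal determinant), so $V$ is determined. Concretely I would decompose $V$ into a maximal split (hyperbolic) part and a small anisotropic kernel, compute the determinant valuation of the associated maximal integral lattice, and check that minimality pins down the anisotropic kernel. When $\p\nmid 2$ this is elementary: minimality forces $L$ to be unimodular, and unimodular $\Op$-lattices are classified by rank and determinant after diagonalization. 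I expect the main obstacle to be carrying this out uniformly at the dyadic primes $\p\mid 2$, where (for $n$ odd) the minimal valuation is $\ord_\p(2)$ rather than $0$, the ideals $\s(L)$, $\n(L)$ and $\v(L)$ are no longer tightly linked, and one cannot simply diagonalize; here I would instead argue through even-unimodular decompositions, using that reduction modulo $\p$ turns the even-diagonal Hessian into a nondegenerate alternating form over $k_\p$, together with the theory of dyadic integral invariants of \cite{OMeara:1955rr}. Once $V\cong V'$ is established, the uniqueness of maximal lattices gives $L\cong L'$ and completes the proof.
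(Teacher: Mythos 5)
Your reduction is sound and genuinely different from the paper's route, and the first half of it is correct as written: primitivity is indeed equivalent to $\n(L)=\Op$, and your index computation $\ord_\p(\det_H(M))=\ord_\p(\det_H(L))-2k$ combined with Lemma~\ref{Lem:Hessian_det_ideals} (whose proof is purely local and shows every $\Op$-valued rank $n$ form has $\ord_\p(\det_H)\geq\ord_\p(\mathfrak{h}_n)$) correctly forces $L$ to be $\Op$-maximal. Invoking the classical uniqueness of $\Op$-maximal lattices on a fixed space (O'Meara \S91:2, valid at all primes including dyadic ones) then cleanly replaces the paper's final appeal to the dyadic integral classification theorem (\S93:28) and the explicit computation of $\mathfrak{f}_0$ and the weight ideals. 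That is a real simplification of the last step.

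The gap is that you have only relocated, not discharged, the actual content of the theorem. Everything now rests on your claim that the ambient space $V$ is determined by $(n,S_\p)$, i.e.\ that the Hasse invariant is forced, and you prove this nowhere: at $\p\nmid 2$ it is easy (as you say), but at $\p\mid 2$ it is precisely the computation occupying the bulk of the paper's proof (the classification of unimodular lattices with $\n=2\Op$ into sums of $A(0,0)$ and at most one $A(2,2\rho)$, and the $\Omega$-bookkeeping showing that the two decompositions $(\epsilon,u)$ and $(\epsilon+1,u\Omega)$ of a given determinant yield the same Hasse invariant). Moreover, one of the two mechanisms you offer for why the ``wrong'' space is excluded is false in general: a space of the wrong Hasse invariant can perfectly well represent units and hence admit norm-$\Op$ lattices. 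For example, for $n=3$ and $\p\nmid 2$ the anisotropic ternary space of unit determinant over $\Q_p$ represents units; it is excluded not because it has no norm-$\Op$ lattice but because its $\Op$-maximal lattice has Gram determinant of valuation $2$ rather than $0$. Your second mechanism (comparing the determinant valuation of the maximal lattice against the anisotropic kernel) is the right one, but carrying it out uniformly at dyadic primes --- where the anisotropic kernel can have dimension up to $4$ and the norm/scale dichotomy intervenes --- is exactly the work that remains to be done before this counts as a proof.
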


\begin{proof}
The condition that $G_\p$ is $\Op$-valued is equivalent to the local norm ideal $\n(G_\p)$ being integral, and 
the ideal generated by the Hessian determinant squareclass is the volume ideal $\v(2G_\p)$ (defined in \cite[\textsection82:9, p221]{OM}).  The volume ideal $\v(L)$ can be expressed in terms of any Jordan splitting $L = \bigoplus_{i \in \Z} L_i$ as
$$
\v(L) = \p^{
\sum_{i \in \Z} 
i \cdot \mathrm{rank}_{\Op}(L_i) 
}.
$$
We also have that  
$$
\s(L) = \p^{
\min\{i \in \Z \,\mid\, \mathrm{rank}_{\Op}(L_i) \neq 0\}
}
$$
and the scale and norm ideals are related by the containments
$\s(G_\p) \supseteq \n(G_\p) \supseteq 2\s(G_\p)$.

When $\p\nmid 2$ this shows that $\s(G_\p) = \n(G_\p) = \Op$ and the largest possible ideal $\v(2L)$ is $\Op$, which is attained by the unimodular forms.  When $\p\nmid 2$ unimodular forms exist for every $n \in \N$, and they are exactly characterized up to isomorphism by their determinant squareclass \cite[\textsection92:2, p247]{OM}.

When $\p\mid 2$ the condition $2\s(G_\p) \subseteq \n(G_\p) = \Op$ implies that $\s(2G_\p) \subseteq \Op$ and so $\v(2G_\p)\subseteq \Op$.  If $\v(2G_\p) = \Op$ then $2G_\p$ is unimodular and satisfies $\n(2G_\p) \subsetneq \s(2G_\p)$, so by \cite[\textsection93:15, p 258]{OM} we see that $2G_\p$ is an orthogonal direct sum of  binary forms, and so $n$ in even.
If $n$ is odd then largest volume ideal is given by $\v(2G_\p) = 2\Op$ because we can take a rank $n-1$ unimodular form that must be a direct sum of binary forms of norm ideal $2\Op$, and then since $\n(2G_\p) = 2\Op$ the remaining unary form $\al x^2$ must have $\s(\al x^2) = \n(\al x^2) \subseteq 2\Op$, so the volume ideal is largest when $\v(2G_\p) = \s(\al x^2) = 2\Op$ (which can be attained).  We now examine each of these two possibilities.
%
\centerline{\rule{5in}{0.1mm}}

{\bf Case 1 ($n$ even):} When $n$ is even, we define 
$$
\mathcal{L}_{\p; \text{ even}} 
:= 
\mathcal{L}_{\p; n} 
:= 
\left\{
\begin{tabular}{c}
Local genera of \\
non-degenerate  rank $n$ \\
quadratic $\Op$-lattices $L := L_0$ \\
\end{tabular}
\middle|
\begin{tabular}{c}
$L_0$ is unimodular \\
with $\n(L_0) = 2\Op$
\end{tabular}
\right\}.
$$

\begin{sublem}
Suppose that $n\in \N$ is even and $L, L' \in \mathcal{L}_{\p; n}$. Then
$$
\textstyle
\det_H(L) = \det_H(L') 
\qquad \Longrightarrow \qquad
L \cong L'.
$$
\end{sublem}

\begin{proof}[Proof of Sublemma]
We first analyze the Hasse invariants of the unimodular non-diagonalizable binary quadratic lattices with $\n(L) = 2\Op$.  (From \cite[\textsection93:15, p258]{OM} and $\n(L_0) = 2$ we know that $L_0$ must be a sum of lattices of this form.)  We know from \cite[\textsection93:11, pp255-6]{OM} we know that the only such lattices (up to equivalence) are $A(0,0)$ and $A(2, 2\rho)$ where $A(\al, \beta) := \al x^2 + 2xy + \beta y^2$, $\rho \in \SqOp$ is in the squareclass defined by the relation $\Omega =: 1 + 4\rho$ 
and $\Omega\in \SqOp$ is the unique squareclass with quadratic defect $4\Op$.   

{\bf Case a)}  Suppose that $Q = A(0,0) = 2xy$ over $\Op$.  Then $Q \sim_{F_\p} x^2 - y^2$, giving $\det_H(Q) = (-1)(\Op^\times)^2$ and $c_\p(Q) = (1,-1)_\p = 1$.

{\bf Case b)}  Suppose that $Q = A(2, 2\rho) = 2x^2 + 2xy + 2\rho y^2$ over $\Op$.  Then $Q \sim_{F_\p} 2x^2 + (2\rho - \frac{1}{2})y^2$, giving $\det_H(Q) = (4\rho - 1)(\Op^\times)^2 = (\Omega - 2)(\Op^\times)^2$ and $c_\p(Q) = (2, 2\rho - \frac{1}{2})_\p = (2, 2\cdot (4\rho - 1))_\p = \cancel{(2,2)_\p} \cdot (2, \Omega - 2)_\p$.

We note here that the two determinant squareclasses $\det_H(Q)$ are the two possible lifts of the (mod 4) squareclass $-1 \in \SqCl((\Op/4\Op)^\times)$.

From \cite[\textsection93:18(ii), p260]{OM} (and since there $\ord_\p(a) + \ord_\p(b) = \ord_\p(2) + \ord_\p(2)$ is even) we see that any $L \in \mathcal{L}_{\p, even}$ can be written as a direct sum of copies of $A(0,0)$ and at most one $A(2, 2\rho)$, the presence/absence of $A(2, 2\rho)$ is determined by $\det_H(L)$.  Therefore the lattice $L$ is completely determined by $\det_H(L)$ and $n$.
\end{proof}
\vspace{-.2in}
\centerline{\rule{5in}{0.1mm}}

{\bf Case 2 ($n$ is odd):} When $n \in \N$ is odd, we let
$$
\mathcal{L}_{\p; n} 
:= 
\left\{
\begin{tabular}{c}
Local genera of \\
non-degenerate  rank $n$ \\
quadratic $\Op$-lattices \\
$L := L_0 \oplus L_1$ \\
\end{tabular}
\middle|
\begin{tabular}{c}
$L_0$ is unimodular of rank $n-1$, \\
$L_1 =: 2u x^2$ is $2\Op$-modular, \\
and $\n(L_0) = \n(L_1) = 2\Op$ \\
\end{tabular}
\right\}.
$$
Notice that if $G' \in \mathcal{L}_{\p; n}$ then $G'$ is $2\Op$-valued and so $\frac{1}{2}G'$ is a primitive $\Op$-valued genus with normalized Hessian determinant $\det_H(\frac{1}{2}G')$.
From above, we also see that $\mathcal{L}_{\p; n}$ contains all local genera $2G_\p$ where $G_\p$ is a primitive $\Op$-valued local genus and $\det_H(G_\p)$ is normalized.
This shows that if $G_\p$ is a primitive $\Op$-valued local genus then 
$$
\text{$\textstyle\det_H(G_\p)$ is normalized $\Longleftrightarrow G_\p \in \mathcal{L}_{\p; n}$.}
$$

\begin{sublem}
Suppose that $n\in \N$ is odd and $L, L' \in \mathcal{L}_{\p; n}$. Then
$$
\textstyle
\det_H(L) = \det_H(L') 
\qquad \Longrightarrow \qquad
V \cong V',
$$
where $V$ and $V'$ are the quadratic spaces associated to $L$ and $L'$ respectively.
\end{sublem}

\begin{proof}[Proof of Sublemma]
When $n$ is odd then the decomposition $V = V_0 \oplus V_1$ has local invariants 
$$
\textstyle
\det_H(V) = \det_H(V_0) \cdot \det_H(V_1), 
$$
%
$$
\textstyle
c_\p(V) = c_\p(V_0) \cdot c_\p(V_1) \cdot (\det_G(V_0), \det_G(V_1))_\p.
$$
We know that $\det_H(V_1) = u \in \SqOp$ and $c_\p(V_1) = 1$ since $V_1 = 2u x^2$, 
and our previous computations show that $\det_H(V_0) = (-1)^\frac{n-1}{2} \Omega^\epsilon$ with $c_\p(V_0)$ uniquely determined by $\epsilon \in \{0,1\}$.
This gives the local invariants of $V$ as
$$
\textstyle
\det_H(V) = (-1)^\frac{n-1}{2} \Omega^\epsilon u
\qquad
\text{ and }
\qquad
c_\p(V) = c_\p(V_0) \cdot ((-1)^\frac{n-1}{2} \Omega^\epsilon, 2u)_\p.
$$

Now suppose that $L, L' \in \mathcal{L}_{\p, n}$ with $n$ odd, and that their associated quadratic spaces have direct sum decompositions as 
$V = V_0 \oplus V_1$ and $V' = V'_0 \oplus V'_1$,
where $V_i$ and $V'_i$ are the quadratic spaces associated to $L_i$ and $L'_i$ respectively.
%
If $\det_H(V) = \det_H(V')$ then knowing that there are exactly two squareclasses in $\SqOp$ with given reduction in $\{\pm 1\} \subseteq \SqOp{(\Op/4\Op)^\times}$, 
we have that either $\epsilon' = \epsilon$ and $u' = u$ (giving $V' \cong V$), or 
\begin{equation}
\epsilon' \neq \epsilon \quad\text{ and }\quad u' = u \cdot \Omega.
\end{equation}
In the second case we check that $c_\p(V') = c_\p(V)$ since by \cite[\textsection63:11a, p165]{OM} we have that 
$$
((-1)^\frac{n-1}{2} \Omega, 2u)_\p 
= -((-1)^\frac{n-1}{2}, 2u)_\p
= ((-1)^\frac{n-1}{2}, 2u\cdot \Omega)_\p,
$$
and so again $V'\cong V$.
This shows that if $L, L' \in \mathcal{L}_{\p, n}$ with $n$ odd and $\det_H(L) = \det_H(L')$, then  their associated quadratic spaces are equivalent. 
\end{proof}

\begin{sublem}
Suppose that $n\in \N$ is even and $L, L' \in \mathcal{L}_{\p; n}$. Then
$$
\textstyle
\det_H(L) = \det_H(L') 
\qquad \Longrightarrow \qquad
L \cong L'.
$$
\end{sublem}

\begin{proof}[Proof of Sublemma]
To finish the proof, we compute O'Meara's ideal $\mathfrak{f}_0(L)$ (which is ideal $\mathfrak{f}$ associated to the unimodular lattice $L_0$) and the weight ideals $\mathfrak{w}_i(L)$, then use the dyadic integral equivalence conditions in \cite[\textsection93:28, pp267-8]{OM}.  Since $u_i = \ord_\p(\s(L_i)) = \ord_\p(2)$ for $i \in \{0,1\}$ (so $u_0 + u_1 \in 2\Z$), the defining equation for $\mathfrak{f}_0$ on \cite[p264]{OM} becomes
$$
\Op^2 \cdot \mathfrak{f}_0 
= \sum_{\al \in 2\Op, \beta \in 2u\Op^2}
	\mathfrak{d}(\al\beta) + \underbrace{2\p^{\frac{2 \ord_\p(2)}{2} + 0}}_{=\, 4\Op}.
$$
Since the quadratic defect satisfies $\mathfrak{d}(\alpha^2 \cdot \xi) = \alpha^2\mathfrak{d}(\xi)$ (\cite[p160]{OM}), we see that the sum above is $4\sum_{\gamma \in \Op^\times} \mathfrak{d}(\gamma) \subseteq 4\Op$, so $\mathfrak{f}_0 = 4\Op$.  We also compute that $\mathfrak{w}_0(L) = 2\Op$ and $\mathfrak{w}_1(L) = \Op$ from the known norms and scales.  With these, we see that the respective conditions in \cite[\textsection93:28, pp267-8]{OM} for $L, L' \in \mathfrak{L}_{\p; n}$ to be equivalent are:
\begin{enumerate}
\item $\det_H(L_0) \equiv \det_H(L'_0) \pmod {4\Op}$,
\item There is an isometry $V_0 \rightarrow V$,
\item no condition.
\end{enumerate}
When $\det_H(L) = \det_H(L')$ we have that $V \sim V'$ so the second condition certainly holds, and the first condition follows from our previous analysis since $\Omega \equiv 1 \pmod {4\Op}$.
This shows that $L\cong L'$, proving the theorem.
\end{proof}

\vspace{-.2in}
\centerline{\rule{5in}{0.1mm}}

These sublemmas together show that there is at most one local genus of lattices of any given normalized local Hessian determinant.
\end{proof}


\begin{lem}[Classifying Hessian squareclasses]  \label{Lem:Globally_rational}
Given $n \in\N$, then a non-archimedean squareclass $S \in \SqAfInt$ is the Hessian determinant squareclass of a some non-degenerate rank $n$ (possibly not $\OF$-valued) quadratic $\OF$-lattice $L$ iff $S$ is globally rational.
\end{lem}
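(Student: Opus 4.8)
The plan is to prove both implications by comparing the integral Hessian determinant squareclass $\det_H(L)$ with the \emph{rational} Hessian determinant of the ambient quadratic space, the bridge between the two being the rational reduction map $\rho_\mathbf{f} = \prod_\p \rho_\p$ from diagram (\ref{Eq:Rational_reduction}). The forward ($\Rightarrow$) direction is quick. Suppose $S = \det_H(L)$ for a non-degenerate rank $n$ quadratic $\OF$-lattice $L$ inside a quadratic space $(V,Q)$ over $F$, and set $d := \det_H(V) \in \SqF$. For each prime $\p$, a free $\Op$-basis of $L_\p$ is simultaneously an $F_\p$-basis of $V_\p$, so computing the Hessian determinant in that basis gives $\rho_\p(S_\p) = \det_H(V_\p)$, which is precisely the image of the global class $d$ under $F^\times \to F_\p^\times$. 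Hence $\rho_\mathbf{f}(S) = \Delta(d)$ lies in the image of $\SqF$, i.e. $S$ is globally rational.

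The substance lies in the ($\Leftarrow$) direction, which I would handle by an explicit local-to-global construction. Global rationality supplies $d \in F^\times$ whose image in $\SqFp$ equals $\rho_\p(S_\p)$ for every $\p$. I first fix a convenient global model: take $V = F^n$ with $Q = (2^{-n}d)x_1^2 + x_2^2 + \cdots + x_n^2$, which is non-degenerate with $\det_H(V) = d$, and let $L^{(0)} := \OF e_1 \oplus \cdots \oplus \OF e_n$, so that $\det_H(L^{(0)}_\p) = d\,(\Op^\times)^2$ at every $\p$. The key local step is then this: both $S_\p$ and $d\,(\Op^\times)^2$ reduce under $\rho_\p$ to the image of $d$ in $\SqFp$, so their quotient lies in $\ker(\rho_\p) = (F_\p^\times)^2/(\Op^\times)^2$, a group consisting exactly of the classes $\pi^{2k}(\Op^\times)^2$ with $k \in \Z$. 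Therefore there is a unique $k_\p \in \Z$ with $S_\p = \pi^{2k_\p}\,d\,(\Op^\times)^2$, and $k_\p = 0$ for all but finitely many $\p$ (both classes being units almost everywhere).

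At each of the finitely many $\p$ with $k_\p \neq 0$ I would rescale a single basis vector, setting $M_\p := \Op e_1 \oplus \cdots \oplus \Op e_{n-1} \oplus \Op(\pi^{k_\p} e_n)$ inside $V_\p$. Since the corresponding change-of-basis matrix has determinant $\pi^{k_\p}$, the Hessian matrix transforms as $H \mapsto P^{T} H P$ and its determinant is multiplied by $\pi^{2k_\p}$, giving $\det_H(M_\p) = \pi^{2k_\p}\,d\,(\Op^\times)^2 = S_\p$ as desired; at all other $\p$ I keep $M_\p := L^{(0)}_\p$. By the local-global correspondence for lattices (\cite[\textsection81E]{OM}), there is a unique $\OF$-lattice $L$ on $V$ with $L_\p = M_\p$ for all $\p$. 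This $L$ is non-degenerate of rank $n$ since it spans the non-degenerate space $V$, and $\det_H(L) = S$ by construction, completing the argument.

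The main obstacle here is conceptual rather than computational: one must verify that \emph{no} local obstruction survives beyond the single global rationality condition. This is exactly where the freedom to use lattices that are not $\OF$-valued is essential, since it lets me absorb the entire kernel $\pi^{2\Z}$ of each $\rho_\p$ by rescaling basis vectors — a maneuver that an integrality constraint on $Q(L)$ would forbid, and which would otherwise force the $\mathfrak{h}_n$-type restrictions of Lemma \ref{Lem:Hessian_det_ideals}. The points needing genuine care are checking that the rescaled $M_\p$ coincide with $L^{(0)}_\p$ for almost all $\p$ (so the gluing theorem applies) and that $\det_H$ of the glued lattice is indeed computed place-by-place; both are routine once the kernel computation for $\rho_\p$ is in hand.
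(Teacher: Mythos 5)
Your proof is correct and follows essentially the same route as the paper: the forward direction uses that a local basis of $L_\p$ is a basis of $V_\p$, and the converse builds a global quadratic space of rational determinant $d$, takes the standard free lattice, rescales one basis vector at the finitely many bad primes to absorb the kernel $\pi^{2\Z}(\Op^\times)^2$ of $\rho_\p$, and glues via the local-global principle for lattices. The only (harmless) difference is that you write down an explicit diagonal form where the paper invokes the Hasse principle, and you make the paper's $\beta_\p$ and $\lambda_\p$ concrete as $\pi^{k_\p}$ and a diagonal scaling.
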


\begin{proof}
If $L \subset (V,Q)$ is a non-degenerate quadratic $\OF$-lattice then its Hessian determinant $\det_H(L) =: S = \prod_\p S_\p$ where $S_\p := \det_H(Q'_\p)$ is the Hessian determinant of a quadratic form $Q'_\p$ obtained by choosing $n$ independent generators for $L_\p$ as an $\Op$-module (though $S_\p$ doesn't depend on this choice).  Since these generators for $L_\p$ also give a basis for the local quadratic space $(V_\p, Q_\p)$, we see that 
$S_\p = \det_H(Q_\p) \in \SqFp$ and so $S_\p = (\det_H(Q))_\p$ where $\det_H(Q) \in \SqF$,
which shows that $\det_H(L)$ is always globally rational.

Now suppose that $S \in \SqCl(\A_{F, \mathbf{f}}^\times, U_\mathbf{f})$ is globally rational.  Then $S$ must differ from the squareclass $\al(\Op^\times)^2$ for some $\al\in F^\times$ at only finitely many primes, and denote the set of such primes by $\T$.  
At each $\p\in\T$ we have that $S_\p / \al \in (F_\p^\times)^2$, so we can find some $\beta_\p \in F_\p^\times$ so that $S_\p / \al = (\beta_\p)^2$.
By the Hasse principle for quadratic spaces \cite[\textsection63:23, p171]{OM} we can find an $n$-dimensional quadratic space $(V, Q)$ over $F$ with $\det_H(Q) = \al$.
 We also have that $\al =\det_H(L)$ for the standard free lattice $L := (\OF)^n$ (all relative to a given fixed basis for $V$).
By choosing for each $\p\in \T$ some $\lambda_\p \in \GL_n(F_\p)$ with $\det(\lambda_\p) = \beta_\p$, we see that the local lattices $L'_\p := \lambda_\p L_\p$ have $\det_H(L'_\p) = S_\p$ and these can be assembled (together with $L_\p$ for all $\p\notin\T$) to a quadratic lattice $L' \subset (V, Q)$ with $\det_H(L) = S$.  This shows that every globally rational non-archimedean squareclass arises as $\det_H(L)$.
\end{proof}

\begin{rem} \label{Rem:Local_genus_existence_when_2_splits_completely}
From the proof of Theorem \ref{Thm:Uniqueness_for_normalized_QF} we see that a local genus $G_\p$ with $\det_H(G_\p) = S_\p$ exists for any normalized squareclasss $S_\p \in \SqFpInt$ when $\I(S_\p) \subseteq \mathfrak{h}_n \Op$ both when $n$ is odd, and when $n$ is even and $\p\nmid 2$.  When $n$ is even and $F_\p = \Q_2$, we have existence iff either $\I(S_\p) \subseteq \Op$ and $S \equiv (-1)^\frac{n}{2}$ or $\I(S_\p) \subseteq 4\Op$.
\end{rem}




\section{The total non-archimedean mass}

In this section we define our main object of study, the ``primitive total non-archimedean mass'' $T^*$ of primitive integer-valued quadratic lattices of rank $n$ with given signature, Hasse invariants (at finitely many primes), and Hessian determinant squareclass over a number field $F$.  We show that this primitive total non-archimedean mass can be naturally considered as a purely local object $M^*$, and we examine its convergence properties in this local context.

\begin{defn}  \label{Def:Signature_vector}
Given a number field $F$ and some $n \in \N$ we let $\vec{\sigma}_\infty$ denote a {\bf vector of signatures} for $F$ of rank $n$, by which we mean a vector $\vec{\sigma}_\infty := ((\sigma_{v, +}, \sigma_{v, -}))_{\{v \mid \infty_\R\}} \in \prod_{v \mid \infty_\R} (\Z_{\geq 0} \times \Z_{\geq 0})$ where for each real archimedean place $v$ of $F$ we have $\sigma_{v, +} + \sigma_{v, -}= n$.
\end{defn}

\begin{rem}
Notice that one obtains a vector of signatures of rank $n$ naturally from any $n$-dimensional non-degenerate quadratic space $(V, Q)$ over $F$, and that this vector determines the product of (non-degenerate) quadratic spaces $\prod_{v\mid\infty} (V_v, Q_v)$ up to isomorphism.
\end{rem}

\begin{defn} \label{Def:Hasse_vector}
Let $\S$ be a finite set of non-archimedean places of a number field $F$ and define a {\bf vector of Hasse invariants}, or {\bf Hasse vector}, as a vector $\c_\S \in \prod_{\p\in \S} \{\pm 1\}$.
\end{defn}

\begin{defn} \label{Def:M_rational}
For a fixed number field $F$, $n \in \N$, vectors $\vec{\sigma}_\infty$ and $\c_\S$ of signatures of rank $n$ and Hasse invariants, and some globally rational 
non-archimedean squareclass $S \in \SqAfInt$,
we define the {\bf primitive total rational non-archimedean mass} of (Hessian) determinant $S$ by 
$$
T^*_{\vec{\sigma}_\infty, \c_{\S}; n}(S) := \sum_{G \in \mathbf{Gen}^*(S, \vec{\sigma}_\infty, \c_{\S}; n)} 
\prod_\p \beta_{G, \p}(G)^{-1}.
$$
\end{defn}

To study the global quantity $T_{\vec{\sigma}_\infty, \c_{\S}; n}(S)$ we notice that this can be recovered from a more local quantity that is a little easier to study, though there are some convergence problems for arbitrary products when $n=2$.  Notice that in the following formulation the signature vector $\vec{\sigma}_\infty$ above is replaced by a single sign $\ve_\infty \in \{\pm1\}$, which will help to clarify the dependence of the total mass on the choice of signatures.

\begin{defn} \label{Def:M_idelic}
For a fixed number field $F$,  $n \in \N$, some $\ve_\infty \in \{\pm1\}$, a vector  $\c_\S$ of  Hasse invariants,  and some non-archimedean squareclass $S \in \SqCl(\A_{F, \mathbf{f}}^\times, U_\mathbf{f})$,
we define the {\bf primitive total adelic non-archimedean mass} of (Hessian) determinant $S$ by 
$$
M^*_{\ve_\infty, \c_{\S}; n}(S) := \sum_{G \in \mathbf{Gen}^*_{\mathbf{f}}(S, \ve_\infty, \c_{\S}; n)}  
	\prod_\p \beta_{G, \p}(G)^{-1},
$$
when the infinite product converges, and zero otherwise.
Notice that by Theorem \ref{Thm:Uniqueness_for_normalized_QF} the local genera $G_\p$ are uniquely determined for $\p \notin \Supp(S) \cup \{\p\mid2\}$, so the sum over 
$\mathbf{Gen}^*_{\mathbf{f}}(S, \ve_\infty, \c_{\S}; n)$
is finite.
\end{defn}

We now address some convergence issues for $M^*_{\ve_\infty, \c_{\S}; n}(S)$, and see how it recovers $T^*_{\vec{\sigma}_\infty, \c_{\S}; n}(S)$ as a special case.

\begin{lem}
The quantity $M^*_{\ve_\infty, \c_{\S}; n}(S)$ converges for all $S\in \SqCl(\A_{F, \mathbf{f}}^\times, U_\mathbf{f})$ when $n\neq 2$.  If $n=2$ and $S$ is globally rational (i.e. agrees with some given element of $\SqF$) 
at all but finitely many places $\p$ then $M^*_{\ve_\infty, \c_{\S}; n}(S)$ converges.
\end{lem}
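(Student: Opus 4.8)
The plan is to separate the two sources of variation in $M^*_{\ve_\infty, \c_{\S}; n}(S)$ --- the sum over genera and the infinite product of local densities --- and to reduce everything to the convergence of the latter. First I would invoke Theorem \ref{Thm:Uniqueness_for_normalized_QF}: for every prime $\p \notin \Supp(S) \cup \{\p \mid 2\}$ one has $\ord_\p(S_\p) = 0 = \ord_\p(\mathfrak{h}_n)$, so $S_\p$ is normalized and there is a unique local genus $G_\p$ of the prescribed Hessian determinant, necessarily unimodular. Hence a tuple $(G_\p) \in \mathbf{Gen}^*_{\mathbf{f}}(S, \ve_\infty, \c_{\S}; n)$ is free to vary only over the finite set $\Supp(S) \cup \{\p \mid 2\}$ (the conditions $\c_{\S}$ and $\ve_\infty$ cut this down further but do not enlarge it), so the index set is finite. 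Convergence of $M^*$ is therefore equivalent to convergence of each of the finitely many Euler products $\prod_\p \beta_{G,\p}(G)^{-1}$, and it suffices to control the tail over the good primes $\p \notin \Supp(S) \cup \{\p \mid 2\}$.

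At such a good prime $G_\p$ is unimodular of rank $n$, and the standard good-reduction formula for the self-representation density gives $\beta_{G,\p}(G) = |\SO_n(k_\p)| \, q^{-\dim \SO_n}$ with $q = q_\p := |k_\p|$ and $\dim \SO_n = \binom{n}{2}$. Substituting the orders of the finite orthogonal groups, I would record that
$$
\beta_{G,\p}(G) = \prod_{i=1}^{(n-1)/2}(1 - q^{-2i}) \qquad (n \text{ odd}),
$$
while for $n$ even
$$
\beta_{G,\p}(G) = (1 - \epsilon_\p \, q^{-n/2}) \prod_{i=1}^{n/2 - 1}(1 - q^{-2i}),
$$
where $\epsilon_\p = \pm 1$ is the quadratic residue symbol at $\p$ of the suitably signed discriminant of $G_\p$. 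In every case $\beta_{G,\p}(G)^{-1} = 1 + (\text{correction})$, and the whole question becomes one of bounding these corrections as $q_\p \to \infty$.

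For $n \neq 2$ I would conclude absolute convergence at once. When $n$ is odd the correction is $O(q_\p^{-2})$; when $n$ is even with $n \geq 4$ the factor $(1 - \epsilon_\p q_\p^{-n/2})^{-1}$ contributes $O(q_\p^{-n/2}) = O(q_\p^{-2})$ because $n/2 \geq 2$, and the remaining product again contributes $O(q_\p^{-2})$ (the rank-$1$ case $n=1$ is the empty product). Thus $\beta_{G,\p}(G)^{-1} = 1 + O(q_\p^{-2})$ uniformly over good primes, and since $\sum_\p q_\p^{-2}$ converges by comparison with $\zeta_F(2)$, each product $\prod_\p \beta_{G,\p}(G)^{-1}$ converges absolutely.

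The case $n = 2$ is the genuine obstacle, and is exactly where the global-rationality hypothesis is forced upon us. Here $n/2 = 1$, the inner product is empty, and the good-prime factor is \emph{exactly} $\beta_{G,\p}(G)^{-1} = (1 - \epsilon_\p q_\p^{-1})^{-1}$; since $\sum_\p q_\p^{-1}$ diverges the product is not absolutely convergent, and with arbitrary signs $\epsilon_\p$ it need not converge at all. If, however, $S$ agrees with a fixed global squareclass outside a finite set, then outside that set the signs are the values $\epsilon_\p = \chi(\p)$ of the quadratic Hecke character $\chi$ attached to the global discriminant squareclass, so after removing finitely many Euler factors the tail becomes $\prod_\p (1 - \chi(\p) q_\p^{-1})^{-1}$, which is precisely the Euler product of the Hecke $L$-function $L(s,\chi)$ at $s = 1$. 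I would then finish by citing the classical (conditional) convergence of this Euler product at $s=1$ for nontrivial $\chi$, which yields a finite nonzero value $c \cdot L(1,\chi)$. The delicate point --- and the part I expect to require the most care --- is precisely this conditional convergence: the argument depends on the oscillation of $\chi(\p)$ (equivalently on $\chi$ being nontrivial, i.e.\ the discriminant squareclass not being a global square, as happens in the definite setting that recovers $T^*$), and the rearrangement into the $L$-function Euler product must respect the order in which the primes are accumulated.
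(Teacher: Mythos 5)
Your proposal is correct and follows essentially the same route as the paper: reduce to a finite sum over tuples via Theorem \ref{Thm:Uniqueness_for_normalized_QF}, evaluate the good-prime local densities by Hensel's lemma and the orders of finite orthogonal groups, compare with $\zeta_F(2)\zeta_F(4)\cdots$ for $n\neq 2$, and identify the $n=2$ tail with the conditionally convergent Euler product of $L_F(1,\chi)$. Your closing remark that the $n=2$ argument requires $\chi$ to be nontrivial is a point the paper's proof passes over silently, and is worth the care you give it.
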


\begin{proof}
Suppose that $S\in \SqAfInt$.
If 
$\I(S) \nsubseteq \mathfrak{h}_n$
then the sum over $G$ is empty and the empty product converges.   If 
$\I(S) \subseteq \mathfrak{h}_n$ 
then by Theorem \ref{Thm:Uniqueness_for_normalized_QF} we know that for every $\p \notin \T := \Supp(S) \cup \{\p\mid 2\}$ there is a unique (unimodular) local genus $G_\p$ over $\Op$ with $\det_H(G_\p) = S_\p$, and the infinite products converge iff the product $\prod_{\p\notin\T} \beta^{-1}_{G_\p, \p}(G_\p)$ converges.

For $\p \notin \T$ we have by Hensel's Lemma that $\beta_{G_\p, \p}(G_\p) = \frac{|SO_{Q_\p}(k_\p)|}{q^\frac{n(n-1)}{2}}$, and the sizes of orthogonal groups (of types $B$ and $D$) over $k_\p$ are given in \cite[p72]{Carter}.  This leads to the formulas
$$
\beta_{Q_\p, \p}(Q_\p)
=
\begin{cases}
(1-\frac{1}{q^2})(1-\frac{1}{q^4})\cdots (1-\frac{1}{q^{2r}}) & \text{if $n = 2r+1$ is odd,} \\
(1-\frac{1}{q^2})(1-\frac{1}{q^4})\cdots (1-\frac{1}{q^{2r-2}})(1\pm\frac{1}{q^{r}}) & \text{if $n = 2r$ is even,} \\
\end{cases}
$$
where the choice of sign when $n$ is even is given by $\pm = - \chi_\p(S_\p)$ where $\chi_\p$ is the non-trivial quadratic character of $\SqCl(k_\p^\times)$.
This tells us that the generic product $\prod_{\p\notin\T} \beta^{-1}_{G_\p, \p}(G_\p)$ is given by $\zeta_F^\T(2) \zeta_F^\T(4) \cdots \zeta_F^\T(n-1)$ when $n\geq 3$ is odd, and the bounds
$$
1 \leq \prod_{\p\notin\T} \beta^{-1}_{G_\p, \p}(G_\p)
\leq \zeta_F^\T(2) \zeta_F^\T(4) \cdots \zeta_F^\T(n-2) \zeta_F^\T(\tfrac{n}{2}) / \zeta_F^\T(n)
$$
when $n$ is even.  This ensures convergence for arbitrary $S$ when $n\neq 2$ (as $n=1$ has generic factor $1$), but shows that when $n=2$ we can have divergent products if almost all signs are $+$.

However if $n=2$ and $S$ agrees with some $S \in \SqF$ on $\T$ (after possibly enlarging $\T$) then we see that $\prod_{\p\notin\T} \beta^{-1}_{G_\p, \p}(G_\p) = L_F^\T(1, \chi)$ where $\chi$ is the finite order Hecke character of $F$ given by the Legendre symbol $\p \mapsto \leg{a}{\p}$ for all $\p\nmid 2$, which converges.
\end{proof}

\begin{lem} \label{Lem:T_is_M}
Suppose the non-archimedean squareclass $S \in \SqAfInt$ is globally rational.  
Then 
$$
T^*_{\vec{\sigma}_\infty, \c_{\S}; n}(S) = M^*_{\ve_\infty, \c_{\S}; n}(S)
$$
where $\ve_\infty := \prod_{v\mid\infty} c_v((\vec{\sigma}_\infty)_v)$
\end{lem}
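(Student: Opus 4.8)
The plan is to exhibit a bijection between the global index set $\mathbf{Gen}^*(S, \vec{\sigma}_\infty, \c_{\S}; n)$ and the local index set $\mathbf{Gen}^*_{\mathbf{f}}(S, \ve_\infty, \c_{\S}; n)$ under which the summand $\prod_\p \beta_{G,\p}(G)^{-1}$ is preserved term-by-term, after which the equality of the two sums is immediate. Since $S$ is globally rational, the preceding lemma on convergence guarantees that every infinite product in sight converges (even when $n=2$), so $M^*_{\ve_\infty, \c_{\S}; n}(S)$ genuinely equals its defining sum rather than $0$, and there are no convergence subtleties to worry about.

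The central input is the local--global principle for genera: a genus $G$ of quadratic $\OF$-lattices lying in a fixed $F$-quadratic space $(V,Q)$ is the same datum as the tuple $(G_\p)$ of its localizations, i.e.\ a local genus of $\Op$-lattices at each finite prime $\p$, equal to the standard unimodular genus for all but finitely many $\p$. First I would record that the assignment $G \mapsto (G_\p)$ is well-defined and injective, since two global $\OF$-lattices lie in the same genus precisely when they are locally isometric at every place; and that each $G_\p$ recovers the local invariants $\det_H(V_\p)=S_\p$ and $c_\p(V_\p)=c_\p(G_\p)$. The conditions of being primitive and $\OF$-valued are purely local (built into $\mathbf{Gen}^*_\p$), so they transfer between the two descriptions component-by-component.

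Next I would pin down exactly which tuples $(G_\p)$ arise from a genuine global genus of the prescribed type. By the Hasse principle for quadratic spaces \cite[\textsection63:23, p171]{OM}, a space $(V,Q)$ over $F$ with archimedean signature $\vec{\sigma}_\infty$ and finite invariants $\det_H(V_\p)=S_\p$, $c_\p(V_\p)=c_\p(G_\p)$ exists and is unique exactly when these local invariants are compatible: the determinant must be globally rational (true here by hypothesis, via Lemma \ref{Lem:Globally_rational}), and the Hilbert reciprocity/product formula $\prod_v c_v(V_v)=1$ must hold. Splitting this product into its archimedean and non-archimedean parts and using $c_v=c_v^{-1}$ in $\{\pm1\}$, the reciprocity condition becomes
$$
\textstyle\prod_\p c_\p(G_\p) = \prod_{v\mid\infty} c_v((\vec{\sigma}_\infty)_v) = \ve_\infty,
$$
which is precisely the defining constraint of $\mathbf{Gen}^*_{\mathbf{f}}(S, \ve_\infty, \c_{\S}; n)$; the fixed-Hasse condition $c_\p(G_\p)=(\c_\S)_\p$ for $\p\in\S$ matches on both sides verbatim. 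In particular this explains why the non-archimedean sum depends on the archimedean data only through the single sign $\ve_\infty$, so that the fuller global index by $\vec{\sigma}_\infty$ and the leaner local index by $\ve_\infty$ yield the same value. Existence (and uniqueness) of the local genera realizing each admissible $S_\p$ is supplied by Theorem \ref{Thm:Uniqueness_for_normalized_QF}, so every compatible tuple patches to a global genus of the required type.

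The main obstacle will be making the patching step fully rigorous: once the ambient quadratic space $(V,Q)$ is fixed, one must show that an arbitrary compatible tuple $(G_\p)$ is realized by an honest global $\OF$-lattice $L\subset V$, obtained by choosing representatives $L_\p$ in each local genus $G_\p$ that agree with a fixed reference lattice for all but finitely many $\p$ and gluing them; this is the standard fact that lattice data glues (a lattice being determined by its localizations), but it must be carried out so that the resulting $L$ is primitive, $\OF$-valued, and has Hessian determinant squareclass exactly $S$. Once the bijection is in hand, the equality of the two sums follows at once, since $\beta_{G,\p}(G)^{-1}$ is an invariant of the local genus $G_\p$ alone and hence the products $\prod_\p \beta_{G,\p}(G)^{-1}$ coincide across corresponding $G$.
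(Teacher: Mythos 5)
Your proposal is correct and follows essentially the same route as the paper: a localization bijection $\mathbf{Gen}^*(S, \vec{\sigma}_\infty, \c_{\S}; n) \overset{\sim}{\rightarrow} \mathbf{Gen}^*_{\mathbf{f}}(S, \ve_\infty, \c_{\S}; n)$, with injectivity from the Hasse principle plus the fact that a global lattice is determined by its localizations, and surjectivity from global rationality of $S$, Hilbert reciprocity converting $\prod_v c_v = 1$ into the constraint $\prod_\p c_\p(G_\p) = \ve_\infty$, and the local--global gluing of lattices (O'Meara \textsection81:14). The only cosmetic difference is your appeal to Theorem \ref{Thm:Uniqueness_for_normalized_QF} for ``existence'' of local genera, which is not needed since the tuple $(G_\p)$ is already given as an element of the local index set.
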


\begin{proof}
We will prove this by establishing a bijection
$\mathbf{Gen}^*(S, \vec{\sigma}_\infty, \c_{\S}; n)
\overset{\sim}{\rightarrow}
\mathbf{Gen}^*_{\mathbf{f}}(S, \ve_\infty, \c_{\S}; n)$ 
when $S$ is globally rational, and use the identification of (local or global) genera with the orthogonal group orbits of lattices in a (local or global) quadratic space to describe them.
(See \cite[\textsection1.2 and \textsection4.5]{Ha-AWS} for more details.)

We have a localization map $\lambda: L \mapsto (L_\p)_\mathbf{f}$ that takes a genus of lattices $L$ to a non-archimedean tuple of genera of lattices $L_\p$ in their respectively localized quadratic spaces.  Notice that  $\det_H(L) = (\det_H(L_\p))_\mathbf{f}$, $c_\p(V) = c_\p(V_\p)$, and $\mathrm{rank}_{\OF}(L) = \mathrm{rank}_{\Op}(L_\p) = n$ for all primes $\p$, and the product formula for Hasse invariants gives $\prod_\p c_\p(V) = \prod_{v\mid\infty} c_v(V)$.  Therefore by restricting to primitive and $\OF$-valued genera (which are local properties of $L$), we obtain a localization map 
$\lambda: 
\mathbf{Gen}^*(S, \vec{\sigma}_\infty, \c_{\S}; n)
\rightarrow
\mathbf{Gen}^*_{\mathbf{f}}(S, \ve_\infty, \c_{\S}; n)$.

To see that that the map is injective, notice that if $G_\mathbf{f} := \lambda(G)$ then by the Hasse principle for quadratic spaces there is a unique quadratic space $(V,Q)$ localizing to the quadratic spaces $(V_\p,Q_\p)$ of $G_\mathbf{f}$, which is the quadratic space of $L$ (up to global isomorphism).  Also, because global lattices are uniquely determined by their localizations, we see that $L$ is also determined (up to equivalence), so $\lambda$ is injective.

To see that $\lambda$ is surjective when $S$ is globally rational, first notice that by Lemma \ref{Lem:Globally_rational} any 
$G \in \mathbf{Gen}^*(S, \vec{\sigma}_\infty, \c_{\S}; n)$ must have $S$ globally rational, so the condition is necessary.  However when $S$ is globally rational, then by the Hasse principle \cite[\textsection72:1, p203]{OM} any 
$G_\mathbf{f} \in \mathbf{Gen}^*_{\mathbf{f}}(S, \ve_\infty, \c_{\S}; n)$
will have a unique quadratic space $(V,Q)$ over $F$ that localizes to the quadratic spaces of $(G_\mathbf{f})_\p$ at all primes $\p$, and then the local-global principle for lattices  \cite[\textsection81:14, p218]{OM} these $L_\p$ can be assembled to some $\OF$-lattice on $V$ since $\det_H(G_\mathbf{f}) \in \SqAfInt$ implies that $L_\p = (\Op)^n$ for almost all $\p$.
\end{proof}




\section{The Structure of certain Formal  Series}

In this section we use the language of formal series to study the primitive total adelic non-archimedean mass $M^*_{\ve_\infty, \c_{\S}; n}(S)$ locally, and
show that certain formal series associated to the total non-archimedean mass (as the determinant squareclass varies) naturally decomposes (along squareclasses of squareclasses) as a linear combination of two series each of which admits an Euler product.  In Section \ref{Sec:Binary_Forms} we will compute these Euler factors explicitly when $n=2$.

\medskip

Given a function $X_{\bullet}:\SqCl(\A_{F, \mathbf{f}}^\times) \ra \C$, 
we define the {\bf formal non-archimedean squareclass series} associated to $X_{\bullet}$ as formal sum of the form 
$$
\mathcal{F}_{X, \bullet} := 
\sum_{S \in \SqAfInt} \frac{X_{\bullet}(S)}{S}
$$
where the formal symbols $S$ run over idelic square classes $S \in \SqAfInt$.  If $X_{\bullet}(S)$ is a {\bf multiplicative function} on $\SqAfInt$ (meaning that $X_{\bullet}(S) = \prod_\p X_{\p, \bullet}(S_\p)$ for some functions $X_{\p, \bullet}$ on $\SqFpInt$) that is trivial on the 
local squareclasses $\SqOp$ for all but finitely many primes $\p$ (which is required for the multiplicativity to make sense formally), 
then we can express it as local product 
of the form
$$
\sum_{S \in \SqAfInt} \frac{X_{\bullet}(S)}{S} = \prod_\p 
\underbrace{
\sum_{S_\p \in \SqFpInt} 
	\frac{X_{\p, \bullet}(S_\p)}{S_\p}
}_\text{Euler factor at $\p$}
$$
where $X_{\p, \bullet}(S_\p)$ is defined by the natural inclusion map $\SqFpInt \hookrightarrow \SqAfInt$.
We say that such a product over primes is an {\bf Euler product}, and refer to the sum for any given prime $\p$ as the {\bf Euler factor at $\p$}.

For any finite set of primes $\S$, we define the {\bf partial (non-archimedean) formal squareclass series} away from $\S$ by 
\begin{align}
\mathcal{F}^\S_{X, \bullet} 
& := 
	\[
	\prod_{\p \in\S} \frac{1}{(\Op^\times)^2}
	\]
	\cdot 
	\[
	\prod_{\p \notin\S}
	\sum_{S_\p \in \SqFpInt} \frac{X_{\p, \bullet}(S_\p)}{S_\p}
	\] 
\\
&= \sum_{
		\substack{S \in \SqAfInt \text{ where} \\ 
	 	\text{$S_\p = (\Op^\times)^2$ for all $\p \in \S$} 
		} 
 	} 
 	\frac{X_{\bullet}(S)}{S}.
\end{align}
For any subset $\mathbb{K} \subseteq \SqAfInt$ we also define the {\bf restriction to $\mathbb{K}$}
of a formal squareclass series by
$$
\mathcal{F}_{X, \bullet} \Big|_{\mathbb{K}} := \sum_{S\in \mathbb{K}} \frac{X_{\bullet}(S)}{S}.
$$

\medskip

In preparation for our main structure theorem (Theorem \ref{Theorem:M_N_Dirichlet}), we 
give some definitions useful for normalizing squareclasses and define some arithmetically interesting local quantities that will be used later for defining Euler factors.

\begin{defn}  \label{Defn:Normalized_squareclass}
Given $n \in \N$,
we say that a non-archimedean squareclass $\widetilde{S} \in \SqAfInt$ is {\bf normalized for $n$} if 
its associated ideal $\I(\widetilde{S}) = \mathfrak{h}_n$.  Similarly we say that a local squareclass $\widetilde{S_\p} \in \SqFpInt$ is {\bf normalized at $\p$ for $n$} if its associated ideal $\I(\widetilde{S_\p}) = \mathfrak{h}_n \Op$.
\end{defn}

\begin{defn}  \label{Defn:Uniformizing_squareclass}
We say $\pi_\p \in \SqFpInt$ is a {\bf  uniformizing squareclass (at $\p$)} 
if its associated ideal $\I(\pi_\p) = \p$.  A set $\P = \{\pi_\p\}$ consisting of one such $\pi_\p$ for each prime $\p$ of $F$ is called a {\bf family of uniformizing squareclasses}.  
\end{defn}

\begin{defn} \label{Defn:Associated_normalized_squareclass}
Given $n \in \N$ and a family of uniformizing squareclasses $\P = \{\pi_\p\}$,  
for any $S_\p \in \SqFpInt$ we can define the {\bf normalized squareclass $\widetilde{S_\p}$ associated to $S_\p$} by the formula $\widetilde{S_\p} := S_\p \cdot  \pi_\p^{\ord_\p(\mathfrak{h}_n / \I(S_\p))}$. 
Similarly, 
for any $S \in \SqAfInt$ we can define the {\bf normalized squareclass $\widetilde{S}$ associated to $S$} as $\widetilde{S} := S \cdot  \prod_{\p} \pi_\p^{\ord_\p(\mathfrak{h}_n / \I(S))}$, 
by using the natural inclusion $\SqFpInt \hookrightarrow \SqAfInt$. 
For $S \in \SqAfInt$ these two normalizations satisfy the local compatibility relation $(\widetilde{S})_\p = \widetilde{S_\p}$, and both  are normalized squareclasses in the sense of Definition \ref{Defn:Normalized_squareclass}.
\end{defn}

\begin{rem}
When defining the normalized squareclass $\widetilde{S}$ we often implicitly assume a fixed choice of a family  $\P$ of uniformizing squareclasses, and we will only refer to $\P$ explicitly when we are choosing a particular kind of family.
\end{rem}

\begin{defn}  \label{Def:generic_density_and_product}
Given $n \in \N$ and a normalized local squareclass $\widetilde{S_\p} \in \SqFpInt$ we define the  {\bf generic local density at $\p$} of rank $n$ to be 
$$
\beta_{n, \p}(\widetilde{S_\p}) := \beta_{G_\p, \p}(G_\p)
$$
where $G_\p$ is the unique local genus of primitive $\Op$-valued quadratic forms with $\det_H(G_\p) = \widetilde{S_\p}$, described in Lemma \ref{Lem:Hessian_det_ideals}.
At the finitely many primes $\p\mid 2$ where such a genus may not exist for certain $S_\p$, we make an arbitrary (but forever fixed) choice of these generic local densities (e.g. say $\beta_{n, \p}(\widetilde{S_\p}) := 1$).
Similarly, if $S \in \SqAfInt$ then we define the {\bf generic density product} of rank $n$ as the product 
$$
\beta_{n, \mathbf{f}}(\widetilde{S}) := \prod_\p \beta_{n, \p}(\widetilde{S}_\p).
$$
where $\widetilde{S}$ is the normalized squareclass associated to $S$.
\end{defn}

\begin{defn} \label{Def:normalized_mass_sums}
Given $n\in\N$ and  $\ve \in \{\pm1\}$, we define the {\bf normalized local mass sums} at $\p$ to be the quantities
$$
\widetilde{M}_{\p; n}^{*, \ve}(S_\p) :=
\sum_{
G_\p \in \mathbf{Gen}^*_{\p}(S, \ve; n)
}
\frac{\beta_{n, \p}(\widetilde{S_\p})}{\beta_{G_\p, \p}(G_\p)}.
$$
For convenience we also define the quantities 
$$
A^*_{\p; n}(S_\p) := \widetilde{M}_{\p; n}^{*, +}(S_\p) + \widetilde{M}_{\p; n}^{*, -}(S_\p)
\qquad
\text{and}
\qquad
B^*_{\p; n}(S_\p) := \widetilde{M}_{\p; n}^{*, +}(S_\p) - \widetilde{M}_{\p; n}^{*, -}(S_\p),
$$
which allow us to better understand the $\ve$-dependence of $\widetilde{M}_{\p; n}^{*, \ve}(S_\p)$ via the formula
\begin{equation} \label{Eq:M_as_A_and_B}
\widetilde{M}_{\p; n}^{*, \ve}(S_\p) = \frac{A^*_{\p; n}(S_\p) + \ve \cdot B^*_{\p; n}(S_\p)}{2}.
\end{equation}
\end{defn}

\begin{lem} \label{Lem:A_and_B_generically_one}
If $\p\nmid 2$ and $S_\p \in \SqFpInt$,
then
$$ 
A^*_{\p; n}(S_\p) = B^*_{\p; n}(S_\p) =
\begin{cases}
1 & \qquad\text{if $\ord_p(S_\p) = 0$,} \\
0 & \qquad\text{if $\ord_p(S_\p) < 0$,} \\
\end{cases}
$$
so the formal Euler products $\mathcal{F}_{A^*; n}$ and $\mathcal{F}_{B^*; n}$ respectively associated to the functions 
$A^*_n(S) := \prod_\p A^*_{\p; n}(S_\p)$ and $B^*_n(S) := \prod_\p B^*_{\p; n}(S_\p)$
make sense term-by-term as formal non-archimedean squareclass series.  
\end{lem}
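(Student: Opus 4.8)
The plan is to compute the local quantities $\widetilde{M}_{\p; n}^{*, \pm}(S_\p)$ directly for $\p\nmid 2$ by enumerating the primitive $\Op$-valued genera of Hessian determinant $S_\p$, and then to read off the triviality of $A^*_{\p; n}$ and $B^*_{\p; n}$ on unit squareclasses, from which the statement about formal products follows. First I would record that when $\p\nmid 2$ we have $\mathfrak{h}_n\Op = \Op$ (since $2\in\Op^\times$), so every normalized local squareclass is a unit squareclass; in particular, when $\ord_\p(S_\p)=0$ the associated normalized squareclass satisfies $\widetilde{S_\p} = S_\p$, so that the genus computing $\beta_{n,\p}(\widetilde{S_\p})$ is exactly the genus that appears in the mass sum.

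Next I would treat the two cases separately. When $\ord_\p(S_\p) < 0$, Lemma \ref{Lem:Hessian_det_ideals} shows that no $\Op$-valued lattice can have Hessian determinant of negative order, so $\mathbf{Gen}^*_\p(S_\p, \pm; n)$ is empty and both $\widetilde{M}_{\p; n}^{*, \pm}(S_\p) = 0$, giving $A^*_{\p; n}(S_\p) = B^*_{\p; n}(S_\p) = 0$. When $\ord_\p(S_\p) = 0$, primitivity together with $\Op$-valuedness forces the genus to be unimodular (exactly as in the proof of Theorem \ref{Thm:Uniqueness_for_normalized_QF}), and by O'Meara's classification of unimodular forms at an odd prime \cite[\textsection92:2, p247]{OM} there is precisely one such genus $G_\p$, determined by its determinant squareclass $S_\p$. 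Since this $G_\p$ is the very genus used to define $\beta_{n,\p}(\widetilde{S_\p})$, its contribution to the mass sum is the ratio $\beta_{n,\p}(\widetilde{S_\p})/\beta_{G_\p,\p}(G_\p) = 1$.

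The crucial point---and the one place where the hypothesis $\p\nmid 2$ is genuinely used---is to pin down the Hasse invariant $c_\p(G_\p)$. Diagonalizing $G_\p$ over $\Op$ as $\sum_i a_i x_i^2$ with all $a_i\in\Op^\times$, its Hasse invariant is $\prod_{i<j}(a_i,a_j)_\p$, and each Hilbert symbol $(a_i,a_j)_\p = 1$ because both arguments are units at an odd prime (the very fact already invoked in the definition of $\mathbf{Gen}^*_{\mathbf{f}}$). Hence $c_\p(G_\p) = +1$, so $G_\p$ contributes to $\widetilde{M}_{\p; n}^{*, +}(S_\p)$ and not to $\widetilde{M}_{\p; n}^{*, -}(S_\p)$; this gives $\widetilde{M}_{\p; n}^{*, +}(S_\p) = 1$ and $\widetilde{M}_{\p; n}^{*, -}(S_\p) = 0$, whence $A^*_{\p; n}(S_\p) = B^*_{\p; n}(S_\p) = 1$. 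I expect this Hasse-invariant computation to be the main (if modest) obstacle, since it is precisely what rules out the value $B^*_{\p; n}(S_\p) = -1$ and so makes $A^*$ and $B^*$ coincide.

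Finally, to conclude that the formal Euler products $\mathcal{F}_{A^*; n}$ and $\mathcal{F}_{B^*; n}$ make sense term-by-term, I would combine the two cases: for every $\p\nmid 2$ and every unit squareclass $S_\p\in\SqOp$ we have shown $A^*_{\p; n}(S_\p) = B^*_{\p; n}(S_\p) = 1$. Thus, given any $S\in\SqAfInt$, its components $S_\p$ lie in $\SqOp$ for all but finitely many $\p$, and among those the finitely many primes dividing $2$ are the only places where the local factor can differ from $1$. Hence all but finitely many factors of $\prod_\p A^*_{\p; n}(S_\p)$ and $\prod_\p B^*_{\p; n}(S_\p)$ equal $1$, each product is effectively a finite product, and the associated formal non-archimedean squareclass series are well-defined term-by-term.
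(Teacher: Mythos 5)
Your proposal is correct and follows essentially the same route as the paper's proof: Lemma \ref{Lem:Hessian_det_ideals} handles the case $\ord_\p(S_\p)<0$, and uniqueness of the unimodular genus at $\p\nmid 2$ (via Theorem \ref{Thm:Uniqueness_for_normalized_QF} / O'Meara \textsection92:2) together with triviality of its Hasse invariant gives the case $\ord_\p(S_\p)=0$. The only difference is that you spell out the diagonalization and Hilbert-symbol computation for $c_\p(G_\p)=1$, which the paper asserts in one line.
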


\begin{proof}
By Lemma \ref{Lem:Hessian_det_ideals}, any local genus of non-degenerate $G_\p$ of $\Op$-valued quadratic forms
has associated ideal $\I(\det_H(G_\p)) \subseteq \mathfrak{h}_n \Op \subseteq \Op$, so
so there are no such local general with $\ord_\p(S_\p) < 0$ and so $A^*_{\p; n}(S_\p) = B^*_{\p; n}(S_\p) = 0$ in this case.

If $\p\nmid 2$ and $S_\p \in \SqFpInt$ is a local squareclass with $\ord_\p(S_\p) = 0$ then by Theorem \ref{Thm:Uniqueness_for_normalized_QF} there is a unique local genus $G_\p$ of quadratic forms with $\det(G_\p) = S_\p = \widetilde{S_\p}$.  Since $G_\p$ is unimodular and $\p\nmid 2$ we also have that  $c_\p(G_\p)= 1$, which shows that $A^*_{\p; n}(S_\p) = B^*_{\p; n}(S_\p) = 1$.
\end{proof}

\smallskip

We now give an explicit decomposition formula for $M^*_{\ve_\infty, \c_{\S}; n}(S)$ 
in the language of formal non-archimedean squareclass series.

\begin{thm} \label{Theorem:M_N_Dirichlet}
The formal non-archimedean squareclass series 
$$
\mathcal{F}_{M^*, \ve_\infty, \c_{\S}; n} 
:= \sum_{S \in \SqAfInt} 
	\frac{M^*_{\ve_\infty, \c_{\S}; n}(S)}{S}
$$
can be written as 
$$
\mathcal{F}_{M^*, \ve_\infty, \c_{\S}; n} 
=  
\hspace{-.2in}
\sum_{\substack{\text{normalized} \\ \widetilde{S} \in \SqAfInt}}
\hspace{-.2in}
\tfrac{1}{2} 
\beta_{n, \mathbf{f}}^{-1}(\widetilde{S})
	\Biggl(
	\mathcal{K}_{\c_\S; n} 
	\cdot 
		\[
		\mathcal{F}^\S_{A^*; n} + C_{\ve_\infty, \c_{\S}} \cdot \mathcal{F}^\S_{B^*; n}
		\]
	\Biggr|_{\widetilde{S}\cdot \langle\mathcal{P}\rangle}
$$
where 
$$
\mathcal{K}_{\c_\S; n} := 
\prod_{\p \in \S}
\,
\sum_{S_\p \in \SqFpInt}
\frac{\widetilde{M}_{\p; n}^{*, (\c_{\S})_\p}(S_\p)}{S_\p},
$$
$C_{\ve_\infty, \c_{\S}} := \ve_\infty 
\cdot 
\prod_{\p\in\S} (\c_{\S})_\p \in \{\pm1\}$,
and 
$\langle\mathcal{P}\rangle$ is the multiplicative subset of $\SqAfInt$ generated by the (implicitly fixed) uniformizing family of squareclasses $\P$.
Here the series $\mathcal{F}^\S_{A^*; n}$ and $\mathcal{F}^\S_{B^*; n}$ are both given as Euler products over  primes $\p\notin\S$.
\end{thm}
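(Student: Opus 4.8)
The plan is to start from the definition of $M^*_{\ve_\infty, \c_\S; n}(S)$ as the sum over tuples $(G_\p) \in \mathbf{Gen}^*_\mathbf{f}(S, \ve_\infty, \c_\S; n)$ of the density product $\prod_\p \beta_{G_\p, \p}(G_\p)^{-1}$, and to rewrite the weight in normalized form. First I would insert the generic local densities of Definition \ref{Def:generic_density_and_product} prime-by-prime, writing
$$
\prod_\p \beta_{G_\p,\p}(G_\p)^{-1} = \beta_{n,\mathbf{f}}^{-1}(\widetilde{S}) \cdot \prod_\p \frac{\beta_{n,\p}(\widetilde{S_\p})}{\beta_{G_\p,\p}(G_\p)},
$$
where $\widetilde{S}$ is the normalized squareclass associated to $S$ (Definition \ref{Defn:Associated_normalized_squareclass}), using $\beta_{n,\mathbf{f}}(\widetilde{S}) = \prod_\p \beta_{n,\p}(\widetilde{S_\p})$ and the local compatibility $(\widetilde{S})_\p = \widetilde{S_\p}$. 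The point is that every $S$ lies in a unique coset $\widetilde{S}\cdot\langle\P\rangle$, along which the local normalized components $\widetilde{S_\p}$ are all constant; hence $\beta_{n,\mathbf{f}}^{-1}(\widetilde{S})$ is constant along the coset and may be pulled out once the outer sum is reorganized as a double sum over normalized $\widetilde{S}$ and over $S \in \widetilde{S}\cdot\langle\P\rangle$.

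Next I would perform the sum over the local genera $(G_\p)$ subject to the constraints defining $\mathbf{Gen}^*_\mathbf{f}$. At the primes $\p\in\S$ the Hasse invariant is pinned to $(\c_\S)_\p$, so summing the normalized weight over local genera of that fixed Hasse invariant produces exactly $\widetilde{M}^{*,(\c_\S)_\p}_{\p;n}(S_\p)$, and assembling these across $\S$ (together with the factors $1/S_\p$) yields the finite product $\mathcal{K}_{\c_\S; n}$. At the primes $\p\notin\S$ the Hasse invariants are free except for the single global identity $\prod_\p c_\p(G_\p) = \ve_\infty$, which after fixing the $\S$-part reads $\prod_{\p\notin\S} c_\p(G_\p) = C_{\ve_\infty, \c_\S}$. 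I would detect this constraint with the elementary identity
$$
\mathbf{1}\big[\textstyle\prod_{\p\notin\S} c_\p(G_\p) = C\big] = \tfrac{1}{2}\big(1 + C\textstyle\prod_{\p\notin\S} c_\p(G_\p)\big), \qquad C \in \{\pm1\},
$$
valid since each $c_\p(G_\p)\in\{\pm1\}$. Substituting and exchanging the sum over genera with the (formal) product over primes turns the unconstrained sum into $\prod_{\p\notin\S}\big(\widetilde{M}^{*,+}_{\p;n}+\widetilde{M}^{*,-}_{\p;n}\big)=\prod_{\p\notin\S} A^*_{\p;n}$, while the term weighted by $C\prod c_\p$ becomes $\prod_{\p\notin\S}\big(\widetilde{M}^{*,+}_{\p;n}-\widetilde{M}^{*,-}_{\p;n}\big)=\prod_{\p\notin\S} B^*_{\p;n}$. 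This is the source of both the factor $\tfrac12$ and the sign $C_{\ve_\infty,\c_\S}$.

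Finally I would translate these products of local sums into the formal squareclass series language: the product over $\p\notin\S$ of $\sum_{S_\p} A^*_{\p;n}(S_\p)/S_\p$ with trivial $\S$-components is by definition $\mathcal{F}^\S_{A^*;n}$, and likewise for $B^*$, so that recombining with the $\S$-part $\mathcal{K}_{\c_\S;n}$ and restricting to the coset $\widetilde{S}\cdot\langle\P\rangle$ (then summing over all normalized $\widetilde{S}$) exactly reconstitutes $\sum_{S}M^*(S)/S$, giving the claimed identity. Here Lemma \ref{Lem:A_and_B_generically_one} is essential: it forces $A^*_{\p;n}(S_\p)=B^*_{\p;n}(S_\p)=1$ whenever $\ord_\p(S_\p)=0$ and $\p\nmid 2$, so only finitely many Euler factors differ from $1$ in each term and the Euler products are well defined term-by-term. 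I expect the principal difficulty to be bookkeeping rather than conceptual: one must verify that $\sum_S$ genuinely partitions as $\sum_{\widetilde{S}}\sum_{S\in\widetilde{S}\langle\P\rangle}$ (each $S$ has a unique normalized representative and uniformizer exponent, with the admissible ideals controlled by Lemma \ref{Lem:Hessian_det_ideals}), that $\widetilde{S_\p}$ is truly constant along each coset so that the normalization $\beta_{n,\mathbf{f}}^{-1}(\widetilde{S})$ cancels against the generic densities hidden inside $\mathcal{K}$, $A^*$ and $B^*$ to return the true densities $\beta_{G_\p,\p}(G_\p)^{-1}$, and that the interchange of the sum over genera with the formal product is legitimate even at the finitely many primes $\p\mid 2$ where the value of $\beta_{n,\p}$ was chosen arbitrarily.
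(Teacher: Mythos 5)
Your proposal is correct and follows essentially the same route as the paper: factor out the generic density product $\beta_{n,\mathbf{f}}^{-1}(\widetilde{S})$, reduce to the finitely many nontrivial primes, detect the global Hasse-invariant constraint by a sign identity, and reassemble the two resulting Euler products as $\mathcal{F}^\S_{A^*;n}$ and $\mathcal{F}^\S_{B^*;n}$ restricted to the cosets $\widetilde{S}\cdot\langle\P\rangle$. The only cosmetic difference is that you detect the parity constraint with the indicator identity $\tfrac{1}{2}(1 + C\prod_{\p\notin\S} c_\p(G_\p))$, which is exactly the $N=2$ case of the paper's Lemma \ref{Lemma:polynomial_id} applied after writing $\widetilde{M}^{*,\ve}_{\p;n} = \tfrac{1}{2}(A^*_{\p;n} + \ve B^*_{\p;n})$.
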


\begin{proof}
Given $S \in \SqAfInt$, 
from Definition \ref{Def:M_idelic}
we have  
\begin{align}
\allowdisplaybreaks[4]
M^*_{\ve_\infty, \c_{\S}; n}(S)
&= 
\sum_{G_\mathbf{f} \in \mathbf{Gen}^*_{\mathbf{f}}(S, \ve_\infty, \c_{\S}; n)}  
	\prod_\p \beta_{G, \p}(G)^{-1}
=
\beta_{n, \mathbf{f}}^{-1}(\widetilde{S})
\cdot 
\sum_{G_\mathbf{f} \in \mathbf{Gen}^*_{\mathbf{f}}(S, \ve_\infty, \c_{\S}; n)}  
	\prod_\p 
	\frac{\beta_{n, \p}(\widetilde{S}_\p)}{\beta_{G, \p}(G)}
\\
& =
\beta_{n, \mathbf{f}}^{-1}(\widetilde{S})
\cdot 
\sum_{G_\mathbf{f} \in \mathbf{Gen}^*_{\mathbf{f}}(S, \ve_\infty, \c_{\S}; n)}
\prod_{\substack{
\p \in\T \text{ where} \\
\T := \{ \p\mid 2\} \cup \Supp(S) \cup \S 
}}
\frac{\beta_{n, \p}(\widetilde{S}_\p)}{\beta_{G, \p}(G)}
\\
& =
\beta_{n, \mathbf{f}}^{-1}(\widetilde{S})
\sum_{\substack{
(\ve_\p)_{\p\in\T} \in \{\pm1\}^{|\T|} \\
\text{satisfying} \\
\ve_\p =  (\c_{\S})_\p \text{ if $\p\in \S$ and } \\
\prod_{\p\in\T - \S} \ve_\p = C_{\ve_\infty, \c_{\S}} 
}}
\,
\sum_{\substack{
\text{Tuples of primitive local genera $G_\p$} \\
\text{of $\Op$-integral quadratic forms} \\
\text{in $n$-variables with $\p\in\T$} \\
\text{$c_\p(G_\p) = \ve_\p$ and $\det_H(G_\p) = S_\p$} \\
}}
\,\,
\prod_{\p \in \T} 
\,\,
\frac{\beta_{n, \p}(\widetilde{S}_\p)}{\beta_{G_\p, \p}(G_\p)}
\\
& =
\beta_{n, \mathbf{f}}^{-1}(\widetilde{S})
\sum_{\substack{
(\ve_\p)_{\p\in\T} \in \{\pm1\}^{|\T|} \\
\text{satisfying} \\
\ve_\p =  (\c_{\S})_\p \text{ if $\p\in \S$ and } \\
\prod_{\p\in\T - \S} \ve_\p = C_{\ve_\infty, \c_{\S}} 
}}
\,
\prod_{\p \in \T}
\,\,
\underbrace{
\sum_{
G_\p \in \mathbf{Gen}^*_{\p}(S, \ve; n)
}
\frac{\beta_{n, \p}(\widetilde{S}_\p)}{\beta_{G_\p, \p}(G_\p)}
}_{\widetilde{M}_{\p; n}^{*, \ve_\p}(S_\p) =
}.
\end{align}

By using (\ref{Eq:M_as_A_and_B})  
to re-express $\widetilde{M}_{\p; n}^{*, \ve_\p}(S_\p)$,
and Lemma \ref{Lemma:polynomial_id} below, we obtain
\begin{align}
\allowdisplaybreaks[4]
M^*_{\ve_\infty, \c_{\S}; n}(S)
& =
\beta_{n, \mathbf{f}}^{-1}(\widetilde{S})
\sum_{\substack{
(\ve_\p)_{\p\in\T} \in \{\pm1\}^{|\T|} \\
\text{satisfying} \\
\ve_\p =  (\c_{\S})_\p \text{ if $\p\in \S$ and } \\
\prod_{\p\in\T - \S} \ve_\p = C_{\ve_\infty, \c_{\S}} 
}}
\,
\prod_{\p \in \T}
\,
\frac{A^*_{\p; n}(S_\p) + \ve_\p B^*_{\p; n}(S_\p)}{2}
\\
& =
\beta_{n, \mathbf{f}}^{-1}(\widetilde{S})
\[
\prod_{\p \in \S}
\,
\widetilde{M}_{\p; n}^{*, (\c_{\S})_\p}(S_\p)
\]
\sum_{\substack{
(\ve_\p)_{\p\in\T-\S} \in \{\pm1\}^{|\T-\S|} \\
\text{satisfying} \\
\prod_{\p\in\T - \S} \ve_\p = C_{\ve_\infty, \c_{\S}} 
}}
\,
\prod_{\p \in \T-\S}
\,
\frac{A^*_{\p; n}(S_\p) + \ve_\p B^*_{\p; n}(S_\p)}{2}
\\
& =
\beta_{n, \mathbf{f}}^{-1}(\widetilde{S})
\[
\prod_{\p \in \S}
\,
\widetilde{M}_{\p; n}^{*, (\c_{\S})_\p}(S_\p)
\]
\,\,\,
\frac{2^{{|\T-\S|}-1}}{2^{|\T-\S|}}
\,
\[
\,
\prod_{\p \in \T-\S} A^*_{\p; n}(S_\p)
+ C_{\ve_\infty, \c_{\S}} 
\prod_{\p \in \T-\S} B^*_{\p; n}(S_\p)
\].
\end{align}

Since all of the valuations $\ord_\p(\det_H(Q)) \geq 0$ for any $\OF$-valued quadratic form $Q$,
we see that $A^*_{\p; n}(S_\p) = B^*_{\p; n}(S_\p) = 0$ when $\ord_\p(S) < 0$.  Also, if $\p\nmid 2$ and $\ord_\p(S)=0$ then there is a unique local genus of $\Op$-valued ternary quadratic forms of determinant $S_\p$, and this local genus has Hasse invariant $c_\p=1$, which shows that $A^*_{\p; n}(S_\p) = B^*_{\p; n}(S_\p) = 1$ when $\ord_\p(S) = 0$.

By Lemma \ref{Lem:Hessian_det_ideals} every $S \in \SqAfInt$ with $M^*_{\ve_\infty, \c_\S; n}(S) \neq 0$ has $\I(S) \subseteq \mathfrak{h}_n$ and becomes normalized by dividing by a product of powers of the uniformizing squareclasses $\pi_\p \in \mathcal{P}$, so 
$$
M^*_{\ve_\infty, \c_\S; n}(S) \neq 0 
\implies 
S \in \bigsqcup_{\substack{
		\text{normalized} \\ 
		\widetilde{S} \in \SqAfInt
		}} 
	\widetilde{S} \cdot \langle\mathcal{P}\rangle.
$$
%
%
%
This shows that the formal non-archimedean squareclass series 
$\mathcal{F}_{M^*, \ve_\infty, \c_{\S}; n}$
can be written as sum over squareclasses with the same normalization $\widetilde{S}$ 
of a linear combination of two formal squareclass series, each of which admits an Euler product expansion given by the desired formulas.
\end{proof}

We now prepare to associate a formal Dirichlet series to a formal non-archimedean squareclass series.

\begin{defn}[Distinguished squareclasses]  \label{Def:distinguished_squareclasses}
Given $n \in \N$, we say that a homomorphism $\lambda: I(\OF) \ra \SqAfInt$ defines a {\bf distinguished family of (non-archimedean) squareclasses} $\lambda(\mathfrak{a})$ if 
$\I(\lambda(\mathfrak{a})) = \mathfrak{a}$  for all $\mathfrak{a} \in I(\OF)$.
A family of distinguished squareclasses gives us a good way of parametrizing (a family of) Hessian determinant squareclasses of $\OF$-valued rank $n$ quadratic $\OF$-lattices by integral ideals.
\end{defn}

\begin{rem}[Normalizing distinguished squareclasses]  
\label{Rem:normalizing_distinguished_squarecalsses}
Because of Theorem \ref{Theorem:M_N_Dirichlet} it is important to understand how the 
normalization map
$\sim_n \,:S \mapsto \widetilde{S}$ affects our family of distinguished squareclasses $\lambda(\mathfrak{a})$.  We have the following commutative diagram
\begin{equation}  \label{Eq:Rational_reduction}
\begin{tikzpicture}
[baseline=(current bounding box.center)]
[description/.style={fill=white,inner sep=2pt}]
\matrix (m) [matrix of math nodes, row sep=3em,
column sep=2.5em, text height=1.5ex, text depth=0.25ex]
{ 
I(\OF) &  \SqAfInt \\
\SqCl(I(\OF)) & \SqAfInt \\
};
\path[->,font=\scriptsize]
(m-1-1) edge node[auto] {$\lambda$} (m-1-2);
\path[dashed,->,font=\scriptsize]
(m-2-1) edge node[auto] {$\widetilde{\lambda}$} (m-2-2);
\path[->,font=\scriptsize] (m-1-2) edge node[auto] {$\sim_n$} (m-2-2);
\path[->>,font=\scriptsize] (m-1-1) edge node[auto] {id} (m-2-1);
\path[->,font=\scriptsize] (m-1-1) edge node[auto] {$\widetilde{\lambda}$} (m-2-2);
\end{tikzpicture}
\end{equation}
where the dashed map $\widetilde{\lambda}$ is well-defined because 
$$
\widetilde{\lambda(\mathfrak{a})} 
= \lambda(\mathfrak{a}) \prod_\p {\pi_\p}^{\ord_\p(\mathfrak{h}_n) - \ord_\p(\mathfrak{a})},
$$
so by (strict) multiplicativity we have that 
$$
\widetilde{\lambda(\mathfrak{a} \mathfrak{b}^2)}
= \widetilde{\lambda(\mathfrak{a})} 
	\cdot 
	\underbrace{
		\lambda(\mathfrak{b})^2 \prod_\p {\pi_\p}^{- 2\ord_\p(\mathfrak{b})}
	}_{= 1 \in \SqAfInt}
= \widetilde{\lambda(\mathfrak{a})}.
$$
The map $\widetilde{\lambda}$ is usually non-constant (see Remark \ref{Rem:canonical_distinguished_squareclasses} for the exception), and to account for this variation we write $I(\OF)$ as the disjoint union of squareclasses $\mathfrak{t} \cdot I(\OF)^2$
where $\mathfrak{t}$ varies over all squarefree ideals $\mathfrak{t} \in I(\OF)$.
\end{rem}

\begin{rem}[Canonical ``local'' distinguished squareclasses]  
\label{Rem:canonical_distinguished_squareclasses}
Given $n \in \N$, a normalized non-archimedean squareclass $\widetilde{S} \in \SqAfInt$, and a family $\P$ of uniformizing squareclasses, we have a canonical ``local'' distinguished family of squareclasses defined by the rule $\mathfrak{a} =: \prod_\p \p^{\nu_\p} \mapsto \lambda'(\mathfrak{a})$ where 
$$
\lambda'(\mathfrak{a}) := 
\lambda'_{\widetilde{S}, \P ;n}(\mathfrak{a}) := 
\begin{cases}
\prod_\p \pi_\p^{\nu_\p} \cdot \widetilde{S} 
	& \qquad \text{if $n$ is even,} \\
\prod_\p \pi_\p^{\nu_\p - \ord_\p(2)} \cdot \widetilde{S} 
	& \qquad \text{if $n$ is odd.}
\end{cases}
$$
However because this family is defined purely locally, it will not be as interesting as some of the more globally defined squareclasses that we will consider in Section \ref{Sec:Binary_Forms}.  These are the only families of distinguished squareclasses where the map $\widetilde{\lambda}$ is constant (having value $\widetilde{\lambda}(\mathfrak{a}) = \widetilde{\lambda}(\OF) = \widetilde{S}$).
\end{rem}

This notion of distinguished squareclasses will allow us to naturally associate formal Dirichlet series to a formal non-archimedean squareclass series.

\begin{defn}[Associated Dirichlet series] \label{Defn:Associated_Dirichlet_series}
Given 
a family $\lambda$ of distinguished  
squareclasses, we can associate to any formal non-archimedean squareclass series $\mathcal{F}_{X, \bullet; n}$ 
the formal Dirichlet series $D_{X, \lambda, \bullet ;n}$ by considering the terms associated to the distinguished squareclasses $\lambda(\mathfrak{a})$.
More generally, for any finite (possibly empty) set $\T$ of primes, we let 
$$
D^\T_{X, {\lambda}, \bullet; n}(S) :=  \sum_{\mathfrak{a}\in I^\T(\OF)} \frac{X_{\bullet; n}(\lambda(\mathfrak{a}))}{\mathfrak{a}^s}.
$$
\end{defn}

The following corollary shows that these formal Dirichlet series inherit much of the structure of the formal squareclass series they are derived from.

\begin{cor} \label{Cor:Formal_Dirichlet_M}
Given $n\in\N$, $\ve_\infty \in \{\pm1\}$, a vector of Hasse invariants $\c_\S$, and a family of distinguished squareclasses $\lambda$,
the formal Dirichlet series $D_{M, \lambda, \ve_\infty, \c_\S; n}(s)$ can be written as 
$$
D_{M^*, {\lambda}, \ve_\infty, \c_\S; n}(s) 
= 
\sum_{\mathfrak{t}} 
	\tfrac{1}{2} \beta_{n, \mathbf{f}}^{-1}(\widetilde{\lambda(\mathfrak{t})})
	\cdot 
\left(
	K_{\c_\S; n} 
	\cdot 
	\[ 
	D^\S_{A^*, \lambda; n}(s) + C_{\ve_\infty, \c_{\S}} \cdot D^\S_{B^*, {\lambda}; n}(s)
	\]
\right|_{\mathfrak{t} \, I(\OF)^2}
$$
where $\mathfrak t$ runs over all squarefree ideals in $I(\OF)$, 
$$
K_{\c_\S; n} := 
\prod_{\p \in \S}
\,
\sum_{i=0}^\infty
\frac{\widetilde{M}_{\p; n}^{*, (\c_{\S})_\p}(\lambda(\p^i))}{\p^i},
$$
and $C_{\ve_\infty, \c_{\S}} := \ve_\infty \cdot \prod_{\p\in\S} (\c_{\S})_\p \in \{\pm1\}$.
Here 
both $D_{A^*, {\lambda}; n}^\S(s)$ and $D_{B^*, {\lambda}; n}^\S(s)$ are both given as Euler products over  primes $\p\notin\S$.
\end{cor}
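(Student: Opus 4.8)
The plan is to derive this as the Dirichlet-series translation of the squareclass-series decomposition already proved in Theorem~\ref{Theorem:M_N_Dirichlet}, transported along the parametrization $\mathfrak a \mapsto \lambda(\mathfrak a)$ of Definition~\ref{Defn:Associated_Dirichlet_series}. Starting from
$$
D_{M^*, \lambda, \ve_\infty, \c_\S; n}(s) = \sum_{\mathfrak a \in I(\OF)} \frac{M^*_{\ve_\infty, \c_\S; n}(\lambda(\mathfrak a))}{\mathfrak a^s},
$$
I would first substitute the pointwise identity for $M^*_{\ve_\infty, \c_\S; n}(S)$ that was established (before the concluding passage to formal series) in the course of proving Theorem~\ref{Theorem:M_N_Dirichlet}, namely
$$
M^*_{\ve_\infty, \c_\S; n}(S) = \tfrac12\, \beta_{n, \mathbf f}^{-1}(\widetilde S) \Bigl[\prod_{\p \in \S} \widetilde M_{\p; n}^{*, (\c_\S)_\p}(S_\p)\Bigr] \Bigl[\prod_{\p \notin \S} A^*_{\p; n}(S_\p) + C_{\ve_\infty, \c_\S} \prod_{\p \notin \S} B^*_{\p; n}(S_\p)\Bigr].
$$
Here the products over $\p \notin \S$ are finite, since by Lemma~\ref{Lem:A_and_B_generically_one} the factors satisfy $A^*_{\p; n}(S_\p) = B^*_{\p; n}(S_\p) = 1$ whenever $\p \nmid 2$ and $\ord_\p(S_\p) = 0$, and vanish when $\ord_\p(S_\p) < 0$; this is precisely what makes the eventual Euler products legitimate term-by-term.

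Next I would organize the outer sum over $\mathfrak a \in I(\OF)$ according to the squarefree decomposition $I(\OF) = \bigsqcup_{\mathfrak t} \mathfrak t\, I(\OF)^2$, with $\mathfrak t$ ranging over squarefree integral ideals. The decisive input is Remark~\ref{Rem:normalizing_distinguished_squarecalsses}, which gives $\widetilde{\lambda(\mathfrak a \mathfrak b^2)} = \widetilde{\lambda(\mathfrak a)}$; consequently the factor $\beta_{n, \mathbf f}^{-1}(\widetilde{\lambda(\mathfrak a)})$ is constant on each class $\mathfrak t\, I(\OF)^2$, equal to $\beta_{n, \mathbf f}^{-1}(\widetilde{\lambda(\mathfrak t)})$, and may be pulled outside the inner sum. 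This is exactly why the statement is indexed by squarefree $\mathfrak t$ with each summand restricted to $\mathfrak t\, I(\OF)^2$, mirroring the grouping by normalized $\widetilde S$ over the coset $\widetilde S \cdot \langle\mathcal P\rangle$ in Theorem~\ref{Theorem:M_N_Dirichlet}.

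It then remains to identify the inner sum of the bracketed expression with the displayed combination. Using unique factorization of ideals I would factor each $\mathfrak a$ into its $\S$-part and its prime-to-$\S$ part: the local factors $\widetilde M_{\p; n}^{*, (\c_\S)_\p}$ at primes $\p \in \S$ assemble into the finite product $K_{\c_\S; n}$ of one-prime series $\sum_{i \geq 0} \widetilde M_{\p; n}^{*, (\c_\S)_\p}(\lambda(\p^i))\, \p^{-is}$, while the multiplicative functions $A^*_n(S) = \prod_\p A^*_{\p; n}(S_\p)$ and $B^*_n(S) = \prod_\p B^*_{\p; n}(S_\p)$ produce, upon summation over $\mathfrak a \in I^\S(\OF)$, the Euler products $D^\S_{A^*, \lambda; n}(s)$ and $D^\S_{B^*, \lambda; n}(s)$ over primes $\p \notin \S$. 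Carrying the sign $C_{\ve_\infty, \c_\S}$ onto the $B^*$ term and reassembling then yields the claimed formula.

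The step I expect to be most delicate is the simultaneous bookkeeping of three independent multiplicative structures: the squarefree class $\mathfrak t$ (which fixes the constant $\beta_{n, \mathbf f}^{-1}(\widetilde{\lambda(\mathfrak t)})$), the primes of $\S$ (which feed $K_{\c_\S; n}$), and the primes outside $\S$ (which feed the $A^*/B^*$ Euler products). One must verify that the restriction $|_{\mathfrak t\, I(\OF)^2}$ applied to the product $K_{\c_\S; n}\cdot[D^\S_{A^*, \lambda; n} + C_{\ve_\infty, \c_\S} D^\S_{B^*, \lambda; n}]$ reconstructs each coefficient of $D_{M^*, \lambda, \ve_\infty, \c_\S; n}$ exactly once, so that the $\S$-splitting and the squarefree splitting are compatible with no double counting. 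Everything beyond this is the formal, convergence-free rearrangement of Dirichlet series already underwritten by Theorem~\ref{Theorem:M_N_Dirichlet} and Lemma~\ref{Lem:A_and_B_generically_one}.
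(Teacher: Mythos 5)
Your proposal is correct and follows essentially the same route as the paper's own (much terser) proof: take the pointwise decomposition from Theorem~\ref{Theorem:M_N_Dirichlet}, transport it along $\lambda$, group the ideals into squarefree classes $\mathfrak{t}\,I(\OF)^2$ using Remark~\ref{Rem:normalizing_distinguished_squarecalsses} to pull out the constant $\beta_{n,\mathbf{f}}^{-1}(\widetilde{\lambda(\mathfrak{t})})$, and invoke multiplicativity together with Lemma~\ref{Lem:A_and_B_generically_one} for the Euler products. Your version merely makes explicit the bookkeeping (the $\S$ versus non-$\S$ factorization and the coefficient-matching under the restriction $|_{\mathfrak{t}\,I(\OF)^2}$) that the paper compresses into two sentences.
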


\begin{proof}
The statement of the Corollary follows directly by taking the formal Dirichlet series of the statement in Theorem \ref{Theorem:M_N_Dirichlet} relative to the distinguished family of squareclasses $\lambda$, and by using Remark \ref{Rem:normalizing_distinguished_squarecalsses} to write the sum over normalized $\widetilde{S}$ as a sum over squareclasses of integral ideals.
The Euler product expansions follow because the maps $\mathfrak{a} \mapsto A^*_{\p;n}(\lambda(\mathfrak{a}))$ and $\mathfrak{a} \mapsto B^*_{\p;n}(\lambda(\mathfrak{a}))$ are multiplicative functions, and by Lemma \ref{Lem:A_and_B_generically_one}  the formal infinite products 
involve only finitely many non-trivial factors.
\end{proof}

\begin{rem}[Computing the generic product]
We note that an exact formula for the generic product $\beta_{n, \mathbf{f}}^{-1}(\widetilde{S})$ can be derived from \cite[\textsection6, p115-120]{GHY} because the associated genera of local quadratic forms $Q_\p$ of determinant $\det_H(Q_\p) = \widetilde{S}_\p$  correspond to locally maximal $\Op$-valued quadratic lattices $L_\p$.  (However these $L_\p$ only assemble to a  global maximal lattice $L$ when $\widetilde{S}$ is globally rational.)  For each prime $\p$ these factors differ from the ``generic'' unimodular factors at $\p$ in $L(M)$  in \cite[Eq(7.2), p121]{GHY} by the factors $\lambda_\p$ given in \cite[Prop 4.4 and 4.5, p121]{GHY}.
When $n$ is odd this formula shows that $\beta_{n, \mathbf{f}}^{-1}(\widetilde{S})$ is constant as $\widetilde{S}$ varies, essentially because there is only one orthogonal group over each finite field.

\end{rem}

When $n$ is odd the previous remark considerably simplifies the statement of Corollary \ref{Cor:Formal_Dirichlet_M} by removing the dependence on the squarefree ideals $\mathfrak{t}$.

\begin{cor} \label{Cor:Formal_Dirichlet_decomp_n_odd}
Suppose that $n$ is odd.  Then the formal Dirichlet series $D_{M^*, \lambda, \ve_\infty, \c_\S; n}(s)$ 
of Corollary \ref{Cor:Formal_Dirichlet_M} can be written as 
$$
D_{M^*, {\lambda}, \ve_\infty, \c_\S; n}(s) 
= 
	\tfrac{1}{2} \beta_{n, \mathbf{f}}^{-1}(\widetilde{\lambda(\OF)})
	\cdot 
	K_{\c_\S; n} 
	\cdot 
	\[ 
	D^\S_{A^*, \lambda; n}(s) + C_{\ve_\infty, \c_{\S}} \cdot D^\S_{B^*, {\lambda}; n}(s)
	\],
$$
with the notation as defined there.
\end{cor}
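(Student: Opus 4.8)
The plan is to derive Corollary \ref{Cor:Formal_Dirichlet_decomp_n_odd} directly from Corollary \ref{Cor:Formal_Dirichlet_M} by showing that, when $n$ is odd, the outer sum over squarefree ideals $\mathfrak{t}$ collapses to a single term. The entire content of the reduction lies in the observation, recorded in the Remark immediately preceding the statement, that the generic density product $\beta_{n, \mathbf{f}}^{-1}(\widetilde{S})$ is \emph{constant} in $\widetilde{S}$ when $n$ is odd. I would begin by invoking this fact: for $n$ odd each local factor $\beta_{n, \p}(\widetilde{S}_\p)$ is the local density of the unique normalized local genus, which for odd rank is (after the scaling $2G_\p$ described in Case 2 of Theorem \ref{Thm:Uniqueness_for_normalized_QF}) independent of the particular normalized squareclass, since there is a single orthogonal group of each rank over the residue field $k_\p$.

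First I would make precise why the constancy of $\beta_{n, \mathbf{f}}^{-1}$ forces the collapse. In the statement of Corollary \ref{Cor:Formal_Dirichlet_M}, the coefficient $\tfrac{1}{2}\beta_{n, \mathbf{f}}^{-1}(\widetilde{\lambda(\mathfrak{t})})$ is the only part of each summand that depends on $\mathfrak{t}$ through the normalized squareclass $\widetilde{\lambda(\mathfrak{t})}$; the bracketed Dirichlet-series expression is already restricted to the squareclass $\mathfrak{t}\, I(\OF)^2$. By Remark \ref{Rem:normalizing_distinguished_squarecalsses} the map $\widetilde{\lambda}$ factors through $\SqCl(I(\OF))$, so $\widetilde{\lambda(\mathfrak{t})}$ is exactly the value of this map on the squareclass of $\mathfrak{t}$, and as $\mathfrak{t}$ ranges over squarefree ideals these squareclasses exhaust $\SqCl(I(\OF))$. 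Since $\beta_{n, \mathbf{f}}^{-1}$ is constant on all normalized squareclasses when $n$ is odd, each coefficient equals the common value $\tfrac{1}{2}\beta_{n, \mathbf{f}}^{-1}(\widetilde{\lambda(\OF)})$, which I can pull out of the sum over $\mathfrak{t}$.

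Having extracted the constant coefficient, the remaining sum is $\sum_{\mathfrak{t}} K_{\c_\S; n}\cdot\bigl[D^\S_{A^*, \lambda; n}(s) + C_{\ve_\infty, \c_\S}\cdot D^\S_{B^*, \lambda; n}(s)\bigr]\big|_{\mathfrak{t}\, I(\OF)^2}$, and I would observe that summing a Dirichlet series restricted to each squareclass $\mathfrak{t}\,I(\OF)^2$ over all squarefree $\mathfrak{t}$ simply recovers the full (unrestricted) Dirichlet series, since $I(\OF) = \bigsqcup_{\mathfrak{t}} \mathfrak{t}\,I(\OF)^2$ is a disjoint decomposition. The factor $K_{\c_\S; n}$ and the sign $C_{\ve_\infty, \c_\S}$ do not depend on $\mathfrak{t}$, so they pass outside the sum unchanged. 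This yields precisely the asserted formula with the bracket now an unrestricted sum.

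The only genuine subtlety — and the step I would flag as the main obstacle — is justifying the constancy of $\beta_{n, \mathbf{f}}^{-1}(\widetilde{S})$ for odd $n$ rigorously rather than merely citing it. The cited source \cite{GHY} gives the relevant local density formulas for maximal lattices, and one must check that, in odd rank, the correction factors $\lambda_\p$ of \cite[Prop 4.4 and 4.5]{GHY} together with the unimodular factors produce a value that does not see the squareclass $\widetilde{S}_\p$; the conceptual reason is that in odd rank the two local genera distinguished by their determinant squareclass become isometric after scaling and have orthogonal groups of the same type $B_r$. I would verify that this claim, already asserted in the preceding Remark, applies uniformly at every prime $\p$ (including $\p \mid 2$, where the arbitrary fixed choice of $\beta_{n,\p}$ from Definition \ref{Def:generic_density_and_product} is by construction independent of $\widetilde{S_\p}$), so that the product $\beta_{n, \mathbf{f}}^{-1}(\widetilde{S}) = \prod_\p \beta_{n,\p}(\widetilde{S}_\p)$ is globally constant and the extraction in the second paragraph is valid.
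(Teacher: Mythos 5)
Your proposal is correct and follows essentially the same route as the paper, which offers no separate proof for this corollary beyond the sentence preceding it: the constancy of the generic density product $\beta_{n,\mathbf{f}}^{-1}(\widetilde{S})$ for odd $n$ (asserted in the remark on computing the generic product via \cite{GHY}) lets the coefficient be pulled out of the sum over squarefree ideals $\mathfrak{t}$, and the restricted series then reassemble into the full one. Your added care in spelling out the collapse and in flagging that the constancy claim itself deserves verification (including at $\p\mid 2$, where the fixed arbitrary choice in Definition \ref{Def:generic_density_and_product} is indeed independent of $\widetilde{S_\p}$) goes slightly beyond what the paper records, but it is the same argument.
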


\begin{rem}[Variation of the signature vector]
Notice that while there are  $(n+1)^{r}$ possible signature vectors $\vec{\sigma}_\infty$ for 
non-degenerate quadratic spaces over $F$ of given dimension $n$ (where $r$ is the number of real embeddings of $F$), there are only two possible series $\mathcal{F}_M$ and the dependence on $\vec{\sigma}_\infty$ is encoded in the single sign $\ve_\infty \in \{\pm1\}$.  This observation will allow us to translate questions about the masses of indefinite quadratic forms into question about totally definite forms, where explicit computations are more tractable.  This is done in later sections to numerically verify our local mass computations of $A_{\p; n}$ and $B_{\p; n}$.
\end{rem}

\smallskip
Finally we state and prove the main technical lemma of this section, used in the proof of Theorem \ref{Theorem:M_N_Dirichlet}.

\begin{lem} \label{Lemma:polynomial_id}
Suppose $\T$ is a nonempty finite set, $X_i$ and $Y_i$ are indeterminates for all $i\in\T$, and let $\mu_N$ denote the set of all $N^\text{th}$ roots of unity in $\C$.  Then for any $c\in\mu_N$ we have the polynomial identity
\begin{equation} \label{Eq:Polynomial_identity}
\sum_{\substack{
(\ve_i)_{i\in\T} \in {\mu_N}^{|\T|} \\
\text{satisfying} \\
\prod_{i\in\T} \ve_p = c}}
\,
\prod_{i\in\T} 
\,\,
(X_i + \ve_i Y_i)
= 
N^{|\T|-1}
\[
\prod_{i\in\T} X_i + c \prod_{i\in\T} Y_i 
\]
\, .
\end{equation}
\end{lem}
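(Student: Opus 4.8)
The plan is to prove the identity \eqref{Eq:Polynomial_identity} by expanding the product on the left and tracking which monomials survive after summing over the constrained tuples $(\ve_i)$. I would first expand
$$
\prod_{i\in\T}(X_i + \ve_i Y_i) = \sum_{\Sigma \subseteq \T} \Bigl(\prod_{i\in\Sigma}\ve_i\Bigr)\Bigl(\prod_{i\in\Sigma} Y_i\Bigr)\Bigl(\prod_{i\notin\Sigma} X_i\Bigr),
$$
so that summing over all admissible sign-tuples gives $\sum_{\Sigma\subseteq\T} W(\Sigma)\bigl(\prod_{i\in\Sigma}Y_i\bigr)\bigl(\prod_{i\notin\Sigma}X_i\bigr)$, where the weight is the character-type sum $W(\Sigma) := \sum_{(\ve_i):\,\prod_i \ve_i = c}\prod_{i\in\Sigma}\ve_i$. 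The entire content of the lemma is thus reduced to showing that $W(\Sigma) = N^{|\T|-1}c$ when $\Sigma = \T$, equals $N^{|\T|-1}$ when $\Sigma = \emptyset$, and vanishes otherwise.

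The key step is evaluating $W(\Sigma)$ by detecting the constraint $\prod_i\ve_i = c$ via the orthogonality relation for the group $\mu_N$: for $\zeta\in\mu_N$ one has $\frac{1}{N}\sum_{\chi\in\mu_N}\chi^{-1}\zeta = 1$ if $\zeta = 1$ and $0$ otherwise, applied with $\zeta = c^{-1}\prod_i\ve_i$. Substituting this indicator and interchanging the order of summation frees the $\ve_i$ to range independently over $\mu_N$, giving
$$
W(\Sigma) = \frac{1}{N}\sum_{\chi\in\mu_N}\chi^{-1}c^{-1}\prod_{i\in\T}\Bigl(\sum_{\ve_i\in\mu_N}\chi\,\ve_i^{[i\in\Sigma]}\Bigr).
$$
Now $\sum_{\ve\in\mu_N}\chi\ve = \chi\cdot 0 = 0$ whenever $N>1$, while $\sum_{\ve\in\mu_N}\chi = N\chi$; hence each factor for $i\in\Sigma$ contributes $\sum_\ve\chi\ve$ and each factor for $i\notin\Sigma$ contributes $N\chi$. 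The inner product over $i\in\T$ therefore vanishes unless the ``$\ve$-dependent'' factors all disappear, which (for $N>1$) forces $\Sigma=\emptyset$ or $\Sigma=\T$; I would handle the boundary case $N=1$ (where $\mu_1=\{1\}$ and the identity is trivial) separately. Carrying out the two surviving cases yields $W(\emptyset)=N^{|\T|-1}$ and $W(\T)=c\,N^{|\T|-1}$, which assembles exactly to the right-hand side of \eqref{Eq:Polynomial_identity}.

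The main obstacle I anticipate is purely bookkeeping rather than conceptual: correctly isolating which $\Sigma$ survive requires care, since for intermediate $\Sigma$ the factor $\sum_{\ve}\chi\ve$ vanishes identically (for $N>1$) and one must argue cleanly that the full product is zero. An alternative that sidesteps the character sum is a direct induction on $|\T|$: peeling off one index $j$, one splits the constrained sum according to the value of $\ve_j$ and rewrites $\sum_{\ve_j}(X_j+\ve_j Y_j)(\cdots)$ using the inductive formula for the set $\T\setminus\{j\}$ with the adjusted target $c\,\ve_j^{-1}$, then collapses the resulting geometric-type sums over $\ve_j\in\mu_N$ using $\sum_{\ve\in\mu_N}\ve^k = N$ if $N\mid k$ and $0$ otherwise. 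I expect the induction to be the cleanest route to a fully rigorous argument, with the orthogonality computation above serving as the conceptual explanation for why only the two extreme monomials $\prod X_i$ and $\prod Y_i$ survive.
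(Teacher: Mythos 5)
Your reduction of the identity to evaluating the weights $W(\Sigma)=\sum_{\prod_i\ve_i=c}\prod_{i\in\Sigma}\ve_i$ for each subset $\Sigma\subseteq\T$ is exactly the skeleton of the paper's proof: after normalizing all $Y_i=1$, the paper computes the coefficient $a_{\mathbb U}$ of $\prod_{i\notin\mathbb U}X_i$ and shows it is $N^{|\T|-1}$ for $\mathbb U=\emptyset$, $c\,N^{|\T|-1}$ for $\mathbb U=\T$, and $0$ otherwise. The difference is in how that coefficient is evaluated. The paper does it by fibre counting: the map $\x\mapsto\prod_{i\in\mathbb U}x_i$, restricted to tuples of total product $c$, is onto $\mu_N$ with all fibres of size $N^{|\T|-2}$ whenever $\emptyset\subsetneq\mathbb U\subsetneq\T$, so the weight is $N^{|\T|-2}\sum_{\ve\in\mu_N}\ve=0$. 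You propose character orthogonality or, alternatively, induction on $|\T|$. The induction route is sound and complete (peeling off $j$, applying the inductive identity with target $c\,\ve_j^{-1}$, and using $\sum_{\ve\in\mu_N}\ve^{\pm1}=0$ kills the two cross terms and leaves $N\cdot N^{|\T|-2}\bigl[\prod X_i+c\prod Y_i\bigr]$), and it is arguably more elementary than the paper's equidistribution argument, at the cost of obscuring why only the extreme monomials survive.

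One concrete warning about your character computation as written: you have conflated $\mu_N$ with its dual group. In the display for $W(\Sigma)$ the factor for $i\in\Sigma$ should be $\sum_{\ve}\chi_k(\ve)\,\ve=\sum_{\ve}\ve^{k+1}$, which vanishes \emph{except} when $k\equiv-1\pmod N$ (where it equals $N$); as you have written it, $\sum_{\ve}\chi\,\ve=0$ for every $\chi$, which would force $W(\T)=0$ and contradict the identity. The correct bookkeeping (the $i\in\Sigma$ factors survive only at $k\equiv-1$, the $i\notin\Sigma$ factors only at $k\equiv0$) does recover your stated values, but the display needs the exponent $k$ restored. Since you flag the induction as your rigorous route and that argument is airtight, the proposal stands, but the orthogonality passage should be repaired before use.
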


\begin{proof}
By dividing the identity by $\prod_{i\in\T} Y_i$ and replacing $X_i/Y_i$ by $X_i$, we can assume without loss of generality that all $Y_i = 1$.  For any finite set $\V$ we define a norm map $\Nm_{\V}:(\mu_N)^{\V} \twoheadrightarrow \mu_N$ on $\V$-tuples by $\Nm_{\V}(\x_{\V}) := \prod_{i \in \V} x_i$, and for finite sets  $\V' \subseteq \V$ we define restriction maps $\res^{\V}_{\V'}: (\mu_N)^{\V} \twoheadrightarrow (\mu_N)^{\V'}$ by 
$\res^{\V}_{\V'}(\x_{\V}) := (x_i)_{i\in \V'}$. 

Now consider the term $a_\U \prod_{i\notin\U} X_i$ on the left-hand side of (\ref{Eq:Polynomial_identity}) for some fixed $\U \subseteq \T$.  
Then we have 
$$
a_{\U} = 
\sum_{\substack{
(\ve_i)_{i\in\T} \in {\mu_N}^{|\T|} \\
\text{satisfying} \\
\prod_{i\in\T} \ve_p = c}}
\,
\prod_{i\in\U} 
\,\,
\ve_i
 = 
 \sum_{\x \in \Nm_\T^{-1}(c)}
\Nm_{\U}(\res^{\T}_{\U}(\x)).
$$
For convenience, we let $\varphi_{\U} := (\Nm_{\U} \circ \res^{\T}_{\U}|_{\Nm_\T^{-1}(c)}$.
If $\U = \T$ then  $\Image(\varphi_{\U}) = c$ and $\varphi_{\U}$ has multiplicity $N^{|\T| - 1}$, so $a_\T = c \cdot  N^{|\T| - 1}$.
If $\U = \emptyset $ then $\Image(\varphi_{\U}) = 1$ and $\varphi_{\U}$ has multiplicity $N^{|\T| - 1}$, so $a_\emptyset = N^{|\T| - 1}$.
If $\emptyset \subsetneq \U \subsetneq \T$ then $\Image(\varphi_{\U}) = \mu_N$ and each fibre $\varphi_{\U}^{-1}(\ve)$ has multiplicity $N^{|\T| - 2}$, so $a_\T = N^{|\T| - 2} \sum_{\ve \in \mu_N} \ve = 0$.
\end{proof}

\begin{rem}[Removing Primitivity]
For some applications it is much more natural to remove the condition that we consider only primitive quadratic lattices $(L, Q)$ in the primitive total non-archimedean mass $M^*_{\vec{\sigma}_\infty; n}(S)$, say where $\S = \emptyset$ for simplicity.  This has the effect of multiplying the formal 
associated 
Dirichlet series $D_{M^*, \lambda, \ve_\infty; n}(s)$ by the factor by $\zeta_F(ns + \frac{n(n-1)}{2})$ for every choice $\lambda$ of a distinguished family of squareclasses.  This follows from the scale invariance of the mass $\Mass(L,Q)$, with the $ns$ term coming from fact that scaling a quadratic space by $c$ scales its determinant by $c^n$, and the $\frac{n(n-1)}{2}$ term arising from the variation of the local densities at $v\mid\infty$.
\end{rem}

\begin{rem}[Relation to modular forms]
There are some structural similarities between our Dirichlet series in Corollary \ref{Cor:Formal_Dirichlet_M} and the (naive) Dirichlet series associated to a Hecke eigenform $f(z)$ of half-integral weight via the Mellin transform.  In particular, as pointed out by Shimura in \cite{Shimura:1973kx}, the action of the half-integral weight Hecke algebra on a Hecke eigen-cuspform $f(z) = \sum_{m\geq 1} a_m e^{2\pi i m z}$ produces relations between Fourier coefficients $a_m$  for $m$ within any fixed squareclass $t\N^2$ (with $t$ squarefree), and offers no insight about the squarefree coefficients $a_t$, which must be investigated separately.  This similarity might lead one to hope for the existence of a modular form whose $L$-function is essentially the Dirichlet series $D_{M^*, \lambda, \ve_\infty; n}(s)$ or $D_{M, \lambda, \ve_\infty; n}(s)$ of Definition \ref{Defn:Associated_Dirichlet_series}.

In \cite[Thrm2, p91]{Hirzebruch:1976fk} Hirzebruch and Zagier, following ideas of H. Cohen \cite{Cohen:1975uq}  in higher weights, show that there is a non-holomorphic function $\mathcal{H}(z)$ that transforms as a weight $3/2$ modular form for $\Gamma_0(4)$, whose holomorphic part is a Fourier series generating function for the positive definite total masses $T_{n=2}(m)$ (written there as the Hurwitz numbers $H(m)$) when $m \in \N$.  Furthermore,  the function $\mathcal{H}(z)$ naturally arises as a linear combination of Eisenstein series from the two regular cusps.  Here the total mass Dirichlet series $D_{T; n=2}(s)$ over $\Q$ agrees with the usual $L$-function $L(\mathcal{H}, s)$ of $\mathcal{H}(z)$.
Similarly, when $n=1$ the total mass Dirichlet series $D_{T; n=1}(s)$ for positive definite forms over $\Q$ agrees with the Riemann zeta function $\zeta(s)$, which can be thought of the $L$-function of an Eisenstein series on $\GL_1$.  

These results and structural similarities suggest that the total mass Dirichlet series $D_{M, \lambda, \ve_\infty; n}(s)$ arise as $L$-functions of Eisenstein series on $\GL_n$, and that when $n$ is even the weight should be half-integral (i.e. these are automorphic forms on a double cover of $\GL_n$).  This phenomenon will be investigated further in future papers, and as a first step in this direction, we give explicit formulas for the ternary case $D_{T; n=3}(s)$ in \cite{Hanke_n_equals_3_masses}.
\end{rem}




\section{Structural results for the Euler factors $A^*_{\p; n}$ and $B^*_{\p; n}$}

In this section we establish the rationality of the Euler factors for $A^*_{\p; n}(S_\p)$ and $B^*_{\p; n}(S_\p)$ as $S_\p$ varies but its normalized squareclass $\widetilde{S_\p}$ is fixed
 (under some mild removable conditions when $\p\mid 2$).
 We also use local scaling symmetries when $n$ is odd to gain precise information about how the variation of $\widetilde{S_\p}$ affects these Euler factors (at all primes $\p$).

\begin{defn} \label{Defn:Jordan_structure_and_PLGS}
If $\p\nmid 2$ we define a {\bf Jordan block structure} of size $n \in \N$ as a tuple $(n_1, \cdots, n_r)$ of $n_i \in \N$ where $n_1 + \cdots + n_r = n$, but $r \in \N$ is not specified.
If $\p\mid 2$ and $F_\p = \Q_2$ then we define a 
{\bf partial local genus symbol} of size $n$, as Jordan block structure where 
\begin{enumerate}
\item the commas separating the elements of the tuple can be either a comma `,', a semicolon `;', or a pair of colons `::', 
\item every $n_i \in 2\N$ may appear either with a bar above it or not,
\item the semicolon separator may only appear between two (adjacent) unbarred numbers.
\end{enumerate}
  For example, the symbol $(1; 2, 1, \bar2, 1 :: 3)$ partial local genus symbol of size 10.
\end{defn}

\begin{rem}[Relation to genus symbols] \label{Rem:CS_PLGS_translation}
The partial local genus symbols are in bijective correspondence with the local genus symbols in \cite[Ch 10.7, pp378-384]{CS} for the prime $p=2$, where the oddities and signs are not specified.  In the Conway-Sloane notation, the `::' symbols indicate a separation between trains, and the barred numbers $\bar{n_i}$ indicate Type II Jordan blocks of dimension $n_i$ (which separate compartments iff they are between odd numbers in the same train), and the `;' indicates a separation of compartments (but not trains) between Type I blocks, indicating that their scale ideals differ by a factor of 4.

When $p \neq 2$, the Jordan block structures are in correspondence with the local genus symbols where the signs are not specified.  The invariants described over $\Q$ there are known to hold for any $\p\nmid 2$ for any number field $F$.
\end{rem}

\begin{rem}[Counting Jordan Block Structures] 
One can show that there are exactly $2^{n-1}$ Jordan block structures of size $n$ (for $\p\nmid 2$), because they satisfy the recursion that $J(n) = \sum_{i=0}^{n-1} J(i)$ where $J(0)=J(1) = 1$ and $J(n)$ is the number of Jordan block structures of size $n$.  Counting partial local genus symbols (for $\p\mid 2$ where $F_\p= \Q_2$) can also be done, but it is a much more complicated task.
\end{rem}

\begin{thm}[Rationality of ``fixed unit'' local series] \label{Thm:Rationality_of_Euler_factors}
If  $\p\nmid 2$ or $F_\p = \Q_2$, then 
the formal local Dirichlet series 
$$
\sum_{
	\substack{S_\p \in \SqFpInt \\ \text{with $\widetilde{S_\p}$ fixed}}
	}
	\frac{A^*_{\p; n}(S_\p)}{N_{F/\Q}(\I(S_\p))^{s}}
\quad
\text{ and }
\quad 
\sum_{
	\substack{S_\p \in \SqFpInt \\ \text{with $\widetilde{S_\p}$ fixed}}
	}
	\frac{B^*_{\p; n}(S_\p)}{N_{F/\Q}(\I(S_\p))^{s}}
$$
are rational functions of $X := q^{-s}$, where the norm $N_{F/\Q}(\mathfrak{a}) := |\OF/\mathfrak{a}\OF|$.
\end{thm}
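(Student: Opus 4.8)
The plan is to first reduce the two displayed series to the rationality of a single ordinary generating function in $X$, then to parametrize the summands by genus symbols and extract a ``geometric in the scale--gaps'' structure that is manifestly rational.

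\textbf{Reduction to a generating function.} I would begin by reindexing. Fixing the normalized squareclass $\widetilde{S_\p}$ while letting $S_\p$ range over $\SqFpInt$ means exactly that $S_\p=\widetilde{S_\p}\cdot\pi_\p^{\,k}$ with $k:=\ord_\p(S_\p)-\ord_\p(\mathfrak{h}_n)\ge 0$ (by Definition \ref{Defn:Associated_normalized_squareclass}; negative $k$ contributes nothing, since $A^*_{\p;n}=B^*_{\p;n}=0$ whenever $\I(S_\p)\nsubseteq\mathfrak{h}_n\Op$ by Lemma \ref{Lem:Hessian_det_ideals}). Since $N_{F/\Q}(\I(S_\p))=q^{\ord_\p(\mathfrak{h}_n)+k}$ and the generic density $\beta_{n,\p}(\widetilde{S_\p})$ is constant in $k$, factoring out the monomial $X^{\ord_\p(\mathfrak{h}_n)}$ and the constant $\beta_{n,\p}(\widetilde{S_\p})$ reduces the $A^*$-series to the rationality of $\sum_{k\ge 0}\big(\sum_{G_\p}\beta_{G_\p,\p}(G_\p)^{-1}\big)X^{k}$, the inner sum running over all local genera of primitive $\Op$-valued rank $n$ forms with $\det_H(G_\p)=\widetilde{S_\p}\pi_\p^{\,k}$ (using Definition \ref{Def:normalized_mass_sums} and $A^*=\widetilde{M}^{*,+}+\widetilde{M}^{*,-}$). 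The $B^*$-series is identical except that each genus is weighted additionally by its Hasse invariant $c_\p(G_\p)\in\{\pm1\}$.

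\textbf{Parametrization and shape of the density.} Next I would parametrize these genera by their genus symbols: Jordan block structures together with unit/sign data when $\p\nmid 2$, and partial local genus symbols when $F_\p=\Q_2$ (Definition \ref{Defn:Jordan_structure_and_PLGS} and Remark \ref{Rem:CS_PLGS_translation}). Because the total rank $n$ is fixed, the rank data runs over the finitely many compositions of $n$, and the only unbounded information is the sequence of occupied scales $0=i_1<i_2<\cdots<i_r$ (primitivity forcing $i_1=0$). The analytic input I would isolate is that both $\log_q$ of $\beta_{G_\p,\p}(G_\p)^{-1}$ and the valuation of $\det_H(G_\p)$ split as a sum of \emph{intrinsic} block terms, depending only on bounded data (rank $\le n$ and finitely many unit squareclasses, hence drawn from a finite set), plus \emph{cross-interaction} terms between blocks at scales $i_s<i_t$ whose exponents are bilinear in the ranks and linear in the scale difference $i_t-i_s$. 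I would then introduce gap variables $g_u:=i_{u+1}-i_u\ge 1$, writing $L_u,M_u$ for the total rank at-or-below and strictly-above scale $i_u$, so that the varying part of the valuation is $k=\sum_u g_u M_u$ and the cross-interaction exponent is $\tfrac12\sum_u g_u L_u M_u$, each linear in the $g_u$ with coefficients fixed by the ranks.

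\textbf{The gap generating function.} For each fixed composition and admissible sign configuration the summand now factors as a bounded constant times $\prod_u\big(q^{e_u}X^{M_u}\big)^{g_u}$, where $e_u$ is a fixed half-integer depending only on the (fixed) ranks; for $B^*$ the extra Hasse weight contributes further fixed signs through Hilbert symbols, again multiplicative over blocks with geometric cross-terms. Summing each $g_u\ge 1$ gives the geometric series $q^{e_u}X^{M_u}/(1-q^{e_u}X^{M_u})$, rational in $X$ since $M_u\ge 1$. The unit part of $\det_H(G_\p)$ and the total Hasse invariant constrain the admissible sign configurations; since the unit squareclass of $\pi_\p^{\,k}$ is periodic in $k$, I would split the sum over $k$ into the finitely many residue classes modulo the order of $\pi_\p$ in $\SqOp$, on each of which that constraint, and hence the admissible finite set of configurations, is constant. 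Summing the resulting finitely many products of geometric series over all compositions of $n$, sign configurations, and residue classes then exhibits each series as a finite $\C$-linear combination of rational functions of $X$, which proves the claim away from $2$.

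\textbf{Main obstacle.} The hardest part will be the dyadic case $F_\p=\Q_2$. There one must check that, in the Conway--Sloane train/compartment formalism, the partial local genus symbols still decompose into blocks carrying only bounded, rank-dependent decorations, and that the dyadic local density supplied by the Conway--Sloane mass formula retains the linear-in-gaps form used above. The danger is that the formation of trains and compartments, oddity fusion, and cross-train interactions might inject scale dependence that is not linear in the gaps; the crux is to verify that these effects are confined to a bounded neighborhood of each occupied scale (so that they contribute only intrinsic factors), while the long-range interactions between distant trains stay bilinear in ranks and linear in gaps, thereby preserving the finite-state structure that forces rationality.
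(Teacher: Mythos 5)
Your proposal is correct and follows essentially the same route as the paper's proof: both parametrize the local genera by Jordan block structures (resp.\ partial local genus symbols at dyadic primes), observe that the density contribution factors as a bounded ``intrinsic'' constant times $q$ raised to an exponent linear in the scale valuations with coefficients bilinear in the block ranks, and then convert the constrained sum over increasing scales into free sums of geometric series --- your gap variables $g_u=i_{u+1}-i_u$ are exactly the paper's affine (``Ferrer diagram'') change of variables, and your identity $\sum_i\kappa_i\alpha_i=\tfrac12\sum_u g_uL_uM_u$ matches the paper's $\kappa_i$ precisely. Your explicit treatment of the Hasse weighting for $B^*$ and of the residue-class splitting, and your flagging of the dyadic train/compartment verification, are elaborations of points the paper's proof passes over tersely rather than a different argument.
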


\begin{proof}

First assume that $\p\nmid 2$ and 
consider a given Jordan block structure $J$ of size $n$ (i.e. dimensions and scales of the modular lattice summands) arising from $n$-variable primitive $\Op$-valued quadratic forms over $F_\p$, say with $r$ non-zero blocks.  From \cite[\textsection92:2, p147]{OM} we see that by decorating each (non-zero) Jordan block by a valuation zero squareclasses of $(F_\p)^\times$, we parametrize all $\Op$-equivalence classes of quadratic forms with that Jordan block structure $J$.  This decoration process gives rise to a distribution of Hasse invariants and normalized local densities for each Jordan block structure that gives a contribution to $A^*_{\p; n}(S_\p)$ and $B^*_{\p; n}(S_\p)$ (which are the sum of all such contributions).  We observe that this contribution $c_J$ varies with the scales of its components as 
$$
c_J = 
\widetilde{c}_J \cdot
\prod_{i} q^{\kappa_i \al_i}
\qquad \qquad 
\text{where 
$\kappa_i := \sum_{j < i} \tfrac{n_i n_j}{2} - \sum_{j > i} \tfrac{n_i n_j}{2}$,
}
$$
$n_i := \dim(L_i)$, $\p^{\al_i} := s(L_i)$ and 
$\widetilde{c}_J \in \Q $ is independent of the scale valuations $\al_i$.  More precisely, $\widetilde{c}_J$ depends only on the tuple $\vec{n}_J := (n_i)$ of Jordan block dimensions $n_i \in \N$ with $\sum_i n_i = n$ (and in any associated Jordan block structure $J$ we have $\s(L_i) \supsetneq \s(L_j)$ when $i < j$).  Since there are only only finitely many such tuples $\vec{n}_J$ for any given $n\in \N$,  we have a finite sum of contributions $\widetilde{c}_J$ (indexed by $\vec{n}_J$) which vary with $\al := \ord_\p(S_\p) = \sum_i \al_i$ as sums of the form
$$
c_J = 
\widetilde{c}_J \cdot
\sum_{
\substack{
	0=\al_1 < \al_2 < \cdots < \al_r \\
	\text{ where } \sum_i \al_i = \al
}
}
\prod_{i=1}^r q^{\kappa_i \al_i}.
$$
By using Ferrer diagrams, we can use an affine change variables to rewrite these sums as free sums over $\vec{\beta} := (\beta_1, \cdots, \beta_{r})$ with $\beta_i \geq 0$ where each $\al_i$ is an affine function of $\vec{\beta}$.  Therefore the contribution of each of these terms to the formal Euler factors are products of geometric series, and a finite sum of geometric series is a rational function.

When $\p\mid 2$ and $F_\p = \Q_2$ then the same argument applies where we replace the Jordan block structure by partial local genus symbols $J$ on size $n$, by using the $2$-adic integral local invariants in \cite[\textsection7.3-6, pp380-3]{CS-book}.  For each partial local genus symbol, using \cite[\textsection4]{CS} we notice that the variation of $c_J$ with the scales of the trains comes purely from the ``cross product'' factor, and the sum over all such (finitely many) terms formally gives a sum of geometric series.
\end{proof}

The rationality result for ``fixed unit'' local series in Theorem \ref{Thm:Rationality_of_Euler_factors} allows us to show that the Euler factors of the Dirichlet series $D_{A^*, \lambda, \ve_\infty; n}(s)$ and $D_{B^*, \lambda, \ve_\infty; n}(s)$ are also rational.

\begin{cor}[Rationality of Euler factors] \label{Cor:Rationality_of_AB_Euler_factors}
If  $\p\nmid 2$ or $F_\p = \Q_2$, $\ve \in \{\pm1\}$ and $\lambda$ is a distinguished family of squareclasses, then 
the formal local Dirichlet series 
$$
\sum_{i=0}^\infty
	\frac{A^*_{\p; n}(\lambda(\p^i))}{N_{F/\Q}(\p^i)^{s}}
\quad
\text{ and }
\quad 
\sum_{i=0}^\infty
	\frac{B^*_{\p; n}(\lambda(\p^i))}{N_{F/\Q}(\p^i)^{s}}
$$
are rational functions of $X := q^{-s} = N_{F/\Q}(\p)^{-s}$.
These are the Euler factors at $\p$ of the Dirichlet series $D_{A^*, \lambda, \ve_\infty; n}(s)$ and $D_{B^*, \lambda, \ve_\infty; n}(s)$ described in 
\end{cor}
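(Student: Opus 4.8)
The plan is to reduce the claimed rationality of these Euler factors to the ``fixed unit'' rationality already established in Theorem \ref{Thm:Rationality_of_Euler_factors}. The key observation is that the distinguished family $\lambda$ is, by Definition \ref{Def:distinguished_squareclasses}, a homomorphism $\lambda: I(\OF) \ra \SqAfInt$ satisfying $\I(\lambda(\mathfrak{a})) = \mathfrak{a}$. Evaluating at the prime powers $\mathfrak{a} = \p^i$ we have $\ord_\p(\I(\lambda(\p^i))) = i$ and $\ord_\q(\I(\lambda(\p^i))) = 0$ for $\q \neq \p$. Since $A^*_{\p; n}$ and $B^*_{\p; n}$ depend only on the local squareclass $(\lambda(\p^i))_\p \in \SqFpInt$, the sums in the Corollary are really local sums in which $S_\p := (\lambda(\p^i))_\p$ ranges over a subset of $\SqFpInt$ with $\ord_\p(S_\p) = i$, and $N_{F/\Q}(\p^i) = q^i$ so $X := q^{-s}$ as asserted.

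First I would make precise how the local squareclass $(\lambda(\p^i))_\p$ varies with $i$. Because $\lambda$ is a homomorphism, $(\lambda(\p^i))_\p = ((\lambda(\p))_\p)^i$ inside $\SqFpInt$; but squareclasses are $2$-torsion-flavored, so the sequence of local squareclasses $(\lambda(\p^i))_\p$ is (eventually) periodic in $i$ modulo the squaring relation, and in fact its normalized squareclass $\widetilde{(\lambda(\p^i))_\p}$ takes only finitely many values as $i$ runs over $\Z_{\geq 0}$. I would partition $\Z_{\geq 0}$ into the finitely many residue classes on which the normalized squareclass $\widetilde{S_\p}$ is constant. On each such class the summand $A^*_{\p; n}(\lambda(\p^i))$ is exactly a term appearing in the ``fixed unit'' series of Theorem \ref{Thm:Rationality_of_Euler_factors}, whose summation variable $\ord_\p(S_\p)$ restricted to that arithmetic progression is an affine function of a free nonnegative index.

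The main step is then the following: by Theorem \ref{Thm:Rationality_of_Euler_factors}, for each fixed normalized squareclass $\widetilde{S_\p}$ the generating function $\sum_{\ord_\p(S_\p)} A^*_{\p; n}(S_\p)\, X^{\ord_\p(S_\p)}$ is a rational function of $X$. Extracting the subseries supported on an arithmetic progression $i \equiv a \pmod m$ is a rational operation --- it amounts to applying the standard root-of-unity (Hadamard) filter $\frac{1}{m}\sum_{\zeta^m = 1} \zeta^{-a} f(\zeta X)$ to a rational function, which again yields a rational function of $X$. Summing the finitely many such progressions, one recovers $\sum_{i=0}^\infty A^*_{\p; n}(\lambda(\p^i)) X^i$ as a finite sum of rational functions, hence rational; the identical argument applies to $B^*_{\p; n}$.

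I expect the main obstacle to be the bookkeeping that matches $(\lambda(\p^i))_\p$ to the correct fixed normalized squareclass, namely verifying that the association $i \mapsto \widetilde{(\lambda(\p^i))_\p}$ really is eventually periodic and sorting out the behavior at the primes $\p \mid 2$, where the existence statement of Remark \ref{Rem:Local_genus_existence_when_2_splits_completely} and the hypothesis $F_\p = \Q_2$ are needed so that Theorem \ref{Thm:Rationality_of_Euler_factors} applies. Once the progression decomposition is in place, the rationality is purely formal, so the genuine content is entirely imported from Theorem \ref{Thm:Rationality_of_Euler_factors}. The closing sentence of the Corollary, identifying these series as the Euler factors at $\p$ of $D_{A^*, \lambda, \ve_\infty; n}(s)$ and $D_{B^*, \lambda, \ve_\infty; n}(s)$, is immediate from the Euler product expansion of Corollary \ref{Cor:Formal_Dirichlet_M}.
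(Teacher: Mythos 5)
Your proposal is correct and takes essentially the same route as the paper: the published proof likewise reduces to the ``fixed unit'' series of Theorem \ref{Thm:Rationality_of_Euler_factors} by noting that $\sum_i A^*_{\p;n}(\lambda(\p^i))X^i$ differs from those series only by a rescaling along congruence classes of $i$ (since $\lambda$ is a homomorphism, the normalized squareclass of $(\lambda(\p^i))_\p$ is periodic in $i$ with period dividing $2$, so your root-of-unity filter specializes to the paper's ``overall scaling and possibly an alternating sign''), and such a filter preserves rationality. The only difference is that you spell out the periodicity bookkeeping that the paper leaves implicit.
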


\begin{proof}
These local Dirichlet series agree with the local series described in Theorem \ref{Thm:Rationality_of_Euler_factors}, up to an overall scaling and possibly an alternating sign.  Since any scaling of a power series $F(x)= \sum_{i} a_i x^i$ along congruence classes of powers of $i$ can be accomplished by a rational transformation of $F(x)$, the result follows.
\end{proof}

\begin{rem}[Rationality for dyadic primes]
The condition that $F_\p = \Q_2$ for $\p\mid 2$ in Theorem \ref{Thm:Rationality_of_Euler_factors} is only present because the invariant theory of integral quadratic forms over general dyadic field \cite{OMeara:1955rr, OMeara:1957cr} has not been formulated in a way that makes it readily applicable here.  However one can formulate it in a language of partial local genus symbols (``trains and compartments'') that are decorated by rational invariants of the underlying quadratic spaces (under some equivalences which specialize to ``oddity fusion'' and ``sign walking'' over $\Q_2$) that allow the proof above to work for all $\p\mid 2$.  This reformulation will appear in a forthcoming paper \cite{Hanke:dyadic}.
\end{rem}

We now examine the effect of local unit scalings when $n$ is odd, which allow us to establish symmetries of the Euler factors for $A^*_{\p; n}$ and $B^*_{\p; n}$ across normalized local squareclasses.

\begin{thm}[Normalized local squareclass invariance]
Suppose that $n \in \N$ is odd and that $u_\p \in \Op^\times$.  Then 
$$
A^*_{\p; n}(u_\p S_\p) = A^*_{\p; n}(S_\p)
$$ 
and
$$
B^*_{\p; n}(u_\p S_\p) = 
\begin{cases}
B^*_{\p; n}(S_\p)  & \text{if $n \equiv 1 (\text{mod } 4)$,} \\
(u_\p, u_\p)_\p \cdot B^*_{\p; n}(S_\p)  & \text{if $n \equiv 3 (\text{mod } 4)$.} \\
\end{cases}
$$ 
\end{thm}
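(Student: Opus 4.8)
The plan is to exploit the scaling action of the unit $u_\p$ on local genera and track how it permutes the index sets $\mathbf{Gen}^*_{\p}(S_\p, \ve; n)$ that define the quantities $\widetilde{M}^{*,\ve}_{\p;n}$, and hence $A^*_{\p;n}$ and $B^*_{\p;n}$. First I would set up the scaling bijection: for a local genus $G_\p$ of primitive $\Op$-valued rank $n$ forms, let $u_\p G_\p$ be the genus obtained by scaling the underlying quadratic space by $u_\p$. Since $u_\p \in \Op^\times$ scaling preserves the scale and norm ideals (as $\n(u_\p G_\p) = u_\p \n(G_\p) = \n(G_\p)$), it preserves both $\Op$-valuedness and primitivity, so $G_\p \mapsto u_\p G_\p$ is a bijection on primitive $\Op$-valued rank $n$ local genera.

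The key computation I would carry out is the effect of this scaling on the two local invariants. Because the Hessian matrix scales by $u_\p$, we get $\det_H(u_\p G_\p) = u_\p^n \det_H(G_\p)$, and since $n$ is odd $u_\p^{n-1}$ is a square, so $\det_H(u_\p G_\p) = u_\p \det_H(G_\p)$ in $\SqFpInt$. For the Hasse invariant, writing the space as a diagonalization $\langle a_1,\dots,a_n\rangle$ and using bilinearity of the Hilbert symbol,
\begin{align*}
c_\p(u_\p G_\p)
&= \prod_{i<j}(u_\p a_i, u_\p a_j)_\p
= (u_\p,u_\p)_\p^{\binom{n}{2}} \cdot \Bigl(u_\p, \textstyle\prod_k a_k\Bigr)_\p^{\,n-1} \cdot c_\p(G_\p).
\end{align*}
As $n-1$ is even the middle factor is trivial, leaving $c_\p(u_\p G_\p) = (u_\p,u_\p)_\p^{\binom{n}{2}} c_\p(G_\p)$; reading off the parity of $\binom{n}{2}=\tfrac{n(n-1)}{2}$ shows this exponent is even when $n\equiv 1 \pmod 4$ and odd when $n\equiv 3 \pmod 4$.

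Next I would check that both density factors entering $\widetilde{M}^{*,\ve}_{\p;n}$ are unaffected by unit scaling. The self-representation density is literally scale-invariant, $\beta_{u_\p G_\p,\p}(u_\p G_\p) = \beta_{G_\p,\p}(G_\p)$, since the congruence conditions defining it for $u_\p G_\p$ coincide with those for $G_\p$. For the generic density, the normalized squareclass of $u_\p S_\p$ is $u_\p \widetilde{S_\p}$ (valuations are unchanged by a unit), and scaling the generic genus of $\widetilde{S_\p}$ by $u_\p$ yields a normalized primitive genus of determinant $u_\p\widetilde{S_\p}$, which by the uniqueness in Theorem \ref{Thm:Uniqueness_for_normalized_QF} is the generic genus of $u_\p\widetilde{S_\p}$; scale-invariance then gives $\beta_{n,\p}(\widetilde{u_\p S_\p}) = \beta_{n,\p}(\widetilde{S_\p})$ (alternatively this is the constancy of the generic density for $n$ odd noted in the Remark on computing the generic product).

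Assembling these facts, the scaling map restricts to a bijection
$$
\mathbf{Gen}^*_{\p}(S_\p, \ve; n) \;\overset{\sim}{\longrightarrow}\; \mathbf{Gen}^*_{\p}\bigl(u_\p S_\p,\ (u_\p,u_\p)_\p^{\binom{n}{2}}\ve;\ n\bigr)
$$
preserving each summand $\beta_{n,\p}(\widetilde{S_\p})/\beta_{G_\p,\p}(G_\p)$, so $\widetilde{M}^{*,\ve}_{\p;n}(u_\p S_\p) = \widetilde{M}^{*,(u_\p,u_\p)_\p^{\binom{n}{2}}\ve}_{\p;n}(S_\p)$. When $n\equiv 1 \pmod 4$ the sign $\ve$ is preserved, so each $\widetilde{M}^{*,\pm}$ is invariant and both $A^*$ and $B^*$ are unchanged; when $n\equiv 3 \pmod 4$ with $(u_\p,u_\p)_\p=-1$ the two signs are interchanged, giving $A^*_{\p;n}(u_\p S_\p)=A^*_{\p;n}(S_\p)$ but $B^*_{\p;n}(u_\p S_\p)=-B^*_{\p;n}(S_\p)$, which together with the trivial case $(u_\p,u_\p)_\p=1$ yields the stated formula. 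The main obstacle is the bookkeeping in the Hasse-invariant scaling law — verifying that the cross-terms $(u_\p,\prod_k a_k)_\p^{n-1}$ cancel because $n$ is odd, and correctly extracting the parity of $\binom{n}{2}$ modulo $2$ according to $n \bmod 4$ — since the entire dichotomy in the statement rests on that parity.
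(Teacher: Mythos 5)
Your proposal is correct and follows essentially the same route as the paper: the paper also uses the unit-scaling involution $G_\p \mapsto u_\p G_\p$ on local genera, computes $\det_H(u_\p G_\p) = u_\p S_\p$ and the Hasse-invariant scaling law via the same diagonalization identity $(ua_i,ua_j)_\p = (u,u)_\p(u,a_ia_j)_\p(a_i,a_j)_\p$ (packaged there as Lemma \ref{Lem:Hilbert_symbol_scaling}), and concludes from scale-invariance of the local densities. Your extra check that the generic normalizing factor $\beta_{n,\p}(\widetilde{S_\p})$ is also preserved is a detail the paper leaves implicit, but the argument is the same.
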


\begin{proof}
Consider the effect of the local scaling $G_\p \mapsto u_\p \cdot G_\p$ on the local genera $G_\p$ of $\det_H(G_\p) = S_\p$.  The scaled determinant is given by $\det_H(u_\p G_\p) = u_\p^{n} S_\p = u_\p S_\p$, and by Lemma \ref{Lem:Hilbert_symbol_scaling} we see that the scaled Hasse invariant is given by 
\begin{align*}
c_\p(u_\p S_\p) 
& =  
\textstyle{
	(u_\p, u_\p)_\p^\frac{n(n-1)}{2} 
	\cdot (u, 2^n  \det_H(G_\p))_\p^{n-1} 
	\cdot c_\p(S_\p) 
} 
\\
& = 
c_\p(S_\p) \cdot
\begin{cases}
1  & \text{if $n \equiv 1 (\text{mod } 4)$,} \\
(u_\p, u_\p)_\p  & \text{if $n \equiv 3 (\text{mod } 4)$.} \\
\end{cases}
\end{align*}
These formulas show the desired symmetries because the local unit scaling is an involution on local genera $G_\p$ that gives a bijection between genera of determinants $S_\p$ and $u_\p S_\p$, and also preserves their local densities (by the definition of $\beta_{Q, \p}(Q)$).
\end{proof}

\begin{rem}
If $\p \nmid 2$ and $n$ is odd then we see that the Euler factors $A^*_{\p; n}(S_\p)$ and $B^*_{\p; n}(S_\p)$ depend only on $\ord_\p(S_\p)$, or equivalently, on the local ideal associated to $S_\p$. When $n \equiv 1\pmod4$ this invariance holds at all primes $\p$.
\end{rem}

\begin{rem}[Symmetries for $n$ even]
When $n$ is even it is still possible to gain some symmetry information about the series $\mathcal{F}_{M^*, \ve_\infty, \c_{\S}; n}$ by examining the effect of global unit scalings by $u \in \OF^\times / (\OF^\times)^2$, but this only gives finitely many symmetries.
\end{rem}

\begin{lem} \label{Lem:Hilbert_symbol_scaling}
Given non-degenerate $n$-dimensional quadratic space $(V, Q)$ over a local field $K_v$ of characteristic $\neq 2$, we have
$$
\textstyle{
c_v(u\cdot V) = (u,u)^\frac{n(n-1)}{2}_v \cdot (u, \det_G(V))_v^{n-1} \cdot c_v(V).
}
$$
\end{lem}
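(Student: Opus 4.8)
The plan is to reduce the statement to a direct computation using a diagonalization of $(V,Q)$ together with the bimultiplicativity of the Hilbert symbol. First I would pick an orthogonal basis for $(V,Q)$ over $K_v$ (which exists since $\mathrm{char}(K_v)\neq 2$), writing $Q \cong \langle a_1, \ldots, a_n\rangle$ with each $a_i \in K_v^\times$. In this basis $\det_G(V) = \prod_i a_i$ as a squareclass, and the Hasse invariant is computed as $c_v(V) = \prod_{i<j}(a_i, a_j)_v$. Since scaling the quadratic form by $u$ replaces the diagonal entries $a_i$ by $u a_i$, I would then have $c_v(u\cdot V) = \prod_{i<j}(u a_i, u a_j)_v$, so that the entire problem becomes the expansion of this single product.

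Next I would expand each factor by bimultiplicativity and symmetry of $(\cdot,\cdot)_v$, namely
$$
(u a_i, u a_j)_v = (u,u)_v \cdot (u, a_i)_v \cdot (u, a_j)_v \cdot (a_i, a_j)_v,
$$
and separate the resulting product into three groups. The $(a_i,a_j)_v$ terms reassemble to $c_v(V)$, and the $(u,u)_v$ term appears once per pair $i<j$, giving the factor $(u,u)_v^{n(n-1)/2}$ since there are $\binom{n}{2} = \frac{n(n-1)}{2}$ such pairs.

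The only step that requires genuine care is the bookkeeping for the mixed terms $\prod_{i<j}(u,a_i)_v(u,a_j)_v$: for a fixed index $k$ the symbol $(u,a_k)_v$ occurs exactly once for each of the $n-1$ pairs containing $k$, so these terms collect to $\prod_k (u,a_k)_v^{n-1} = (u, \prod_k a_k)_v^{n-1} = (u,\det_G(V))_v^{n-1}$. Multiplying the three groups then yields the claimed identity. I expect the main obstacle to be purely one of convention-tracking rather than mathematical depth: one must use the strictly-ordered-pair convention $i<j$ for the Hasse invariant, which is exactly what makes the mixed exponent equal $n-1$. With the $i\le j$ convention the diagonal self-symbols $(u,a_k)_v^2=1$ would alter the count, so this choice must be fixed at the outset; once it is, everything else is a mechanical consequence of the bimultiplicativity of $(\cdot,\cdot)_v$ on squareclasses.
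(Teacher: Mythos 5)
Your proposal is correct and follows essentially the same route as the paper's proof: diagonalize $Q$ over $K_v$, expand each $(ua_i, ua_j)_v$ by bimultiplicativity into $(u,u)_v \cdot (u, a_i a_j)_v \cdot (a_i,a_j)_v$, and observe that each $(u,a_k)_v$ occurs $n-1$ times in the product over pairs $i<j$. Your explicit remark about the strict-inequality convention for the Hasse invariant is a sensible bookkeeping point, but otherwise the argument matches the paper's.
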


\begin{proof}
Since $K_v$ has characteristic $\neq 2$, we can assume that $Q \sim_{K_v} a_1x_1^2 + \cdots + a_n x_n^2$ with $a_i \in K_v^\times$, giving $c_v(V) = \prod_{1 \leq i < j \leq n} (a_i, a_j)_v$.  From basic properties of the Hilbert symbol we have that 
$$
(u a_i, u a_j)_v = (u,u)_v \cdot (u, a_i a_j)_v \cdot (a_i, a_j)_v,
$$
which gives the desired formula by noticing that the factors $(u, a_i a_j)_v$ for a given index $i$ (with varying index $j$) appear exactly $n-1$ times.
\end{proof}




\section{Relationship with the Mass formula}

In this section we relate the primitive total adelic non-archimedean mass $M^*_{\ve_\infty, \c_{\S}; n}(S)$ with the total mass of quadratic lattices (as a sum over global classes) of any given signature vector $\vec{\sigma}_\infty$ and Hessian determinant squareclass $S$, where we may restrict the allowed Hasse invariants $c_\p$ at finitely many primes.

\begin{defn}
We define the {\bf proper mass} $\Mass^{+}(L)$ of an $\OF$-valued non-degenerate quadratic lattice $L$ to be the quantity $\Mass(\Lambda, \varphi)$  defined in \cite[eq (5.4), p26]{Ha-Thesis}.  Since $\Mass^{+}(L)$ only depends on the genus $G := \Gen(L)$, we also define the proper mass of a genus $G$ as $\Mass^{+}(G) := \Mass^{+}(L)$ for any $L\in G$.
\end{defn}

\begin{thm} \label{Thm:Mass_theorem}
For a given a signature vector $\vec{\sigma}_\infty$ of rank $n$, Hasse vector $\c_{\S}$ for $F$, 
and 
globally rational non-archimedean squareclass $S \in \SqCl(\A_{F, \mathbf{f}}^\times, U_\mathbf{f})$,
we have 
\begin{align}
M^*_{\ve_\infty, \c_{\S}; n}(S)  
&= \frac{C_{\vec{\sigma}_\infty, n}}{2 \cdot |\Delta_F|^\frac{n(n-1)}{4}} 
\cdot
\frac{N_{F/\Q}(2\OF)^\frac{n(n+1)}{2}}{N_{F/\Q}(\I(S))^\frac{n+1}{2}}
\cdot
\sum_{
G \in \mathbf{Gen}^*(S, \vec{\sigma}_\infty, \c_{\S}; n)
}
\mathrm{Mass}^{+}(G)
\end{align}
where 
$$
C_{\vec{\sigma}_\infty, n} := 
\[
\prod_{\text{$v$ real}} 
2 \cdot 2^\frac{-n \cdot \min\{\sigma_{v,+}, \sigma_{v,-}\} }{2} \cdot V(\sigma_{v,+}) \cdot V(\sigma_{v,-})
\]
\[
\prod_{\text{$v$ complex}} 
2^\frac{n(n-3)}{2} \cdot V(n)
\],
$$
$\ve_\infty := \prod_{v\mid\infty} c_v((\vec{\sigma}_\infty)_v)$, 
and $V(r) := \frac{1}{2} \pi^\frac{r(r+1)}{4} \cdot \(\prod_{i=1}^r \Gamma(\frac{i}{2})\)^{-1}$ for $r \in \Z \geq 0$.
\end{thm}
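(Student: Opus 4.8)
The plan is to prove the identity genus-by-genus, using the Tamagawa form of Siegel's mass formula to trade the non-archimedean local density product $\prod_\p \beta_{G,\p}(G)^{-1}$ for the proper mass $\Mass^{+}(G)$, the cost of the trade being the archimedean local densities, which are explicit orthogonal-group volumes. First I would apply Lemma~\ref{Lem:T_is_M}: since $S$ is globally rational,
$$
M^*_{\ve_\infty, \c_{\S}; n}(S) = T^*_{\vec{\sigma}_\infty, \c_{\S}; n}(S) = \sum_{G \in \mathbf{Gen}^*(S, \vec{\sigma}_\infty, \c_{\S}; n)} \prod_\p \beta_{G,\p}(G)^{-1}.
$$
As the claimed prefactor depends only on $F$, $n$, $\vec{\sigma}_\infty$ and $S$, all fixed across this sum, it suffices to show for each such $G$ that $\prod_\p \beta_{G,\p}(G)^{-1}$ equals that prefactor times $\Mass^{+}(G)$.

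Next I would invoke the mass formula of \cite{Ha-Thesis}, which writes $\Mass^{+}(G)$ --- defined there via the Tamagawa measure on the adelic special orthogonal group of the ambient quadratic space --- as the Tamagawa volume times a product of local densities over all places $v$ of $F$. Expressing the Tamagawa measure as a product of local measures introduces the discriminant factor $|\Delta_F|^{-\dim(\SO_n)/2} = |\Delta_F|^{-n(n-1)/4}$, while the Tamagawa number contributes the factor $2$, so that
$$
\Mass^{+}(G) = 2\,|\Delta_F|^{n(n-1)/4}\ \prod_{v} \beta_{G,v}(G)^{-1}
$$
(up to the standard local measure conventions). Separating off the finitely many archimedean factors and solving for the non-archimedean product reduces the theorem to computing $\prod_{v\mid\infty}\beta_{G,v}(G)^{-1}$ and collecting constants.

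The archimedean factors I would compute directly. Since every $G$ in the sum has the prescribed signature $\vec{\sigma}_\infty$, the localized spaces $(V_v,Q_v)$ are isometric for all such $G$, so $\prod_{v\mid\infty}\beta_{G,v}(G)^{-1}$ is a single constant independent of $G$. The self-representation density $\beta_v$ at an archimedean place is governed by the volume of (the maximal compact subgroup of) the orthogonal group of $(V_v,Q_v)$ together with a determinant-dependent Jacobian; the scaling relation $\beta_v(cA,cA) = c^{-n(n+1)/2}\beta_v(A,A)$ fixes the determinant dependence as $\|\det_G\|_v^{(n+1)/2}$. At a real place of signature $(\sigma_{v,+},\sigma_{v,-})$ the relevant maximal compact is $\mathrm{O}(\sigma_{v,+})\times\mathrm{O}(\sigma_{v,-})$, whose normalized volume is $V(\sigma_{v,+})\,V(\sigma_{v,-})$ (as $V(r)$ equals, up to a power of $2$, the Haar volume of $\mathrm{O}(r)$), with residual powers of $2$ assembling into $2\cdot 2^{-n\min\{\sigma_{v,+},\sigma_{v,-}\}/2}$; at a complex place the group is $\mathrm{O}_n(\C)$, contributing $V(n)$ and $2^{n(n-3)/2}$. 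Dividing $\Mass^{+}(G)$ by $2\,|\Delta_F|^{n(n-1)/4}\prod_{v\mid\infty}\beta_{G,v}(G)^{-1}$ therefore surfaces exactly $C_{\vec{\sigma}_\infty, n}$ in the numerator; moreover, since $S$ is globally rational $\det_G$ lies in $F^{\times}$, and the product formula gives $\prod_{v\mid\infty}\|\det_G\|_v^{(n+1)/2} = N_{F/\Q}((\det_G))^{(n+1)/2}$, whence $\det_H = 2^{n}\det_G$ (so the Gram determinant ideal is $\I(S)\cdot(2\OF)^{-n}$) produces the ratio $N_{F/\Q}(2\OF)^{n(n+1)/2}/N_{F/\Q}(\I(S))^{(n+1)/2}$.

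I expect the main obstacle to be the normalization bookkeeping rather than any conceptual step: reconciling the local measure conventions implicit in $\beta_{G,v}(G)^{-1}$ with those in the Tamagawa form of the mass formula, correctly placing the single factor of $2$ (Tamagawa number versus the proper-versus-improper automorphism count) and the direction (numerator versus denominator) in which the compact-group volumes enter, and verifying that the archimedean orthogonal-group volumes reproduce $V(r)$ with precisely the stated powers of $2$ and $\pi$ at both real and complex places, including the regularization needed in the indefinite case. Once these constants are pinned down, summing the termwise identity over $G \in \mathbf{Gen}^*(S, \vec{\sigma}_\infty, \c_{\S}; n)$ gives the theorem.
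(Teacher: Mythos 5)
Your proposal follows essentially the same route as the paper's proof: both start from the Tamagawa-measure mass formula of \cite{Ha-Thesis} (its eq.~(5.7)), peel off the archimedean factors as normalized orthogonal-group volumes giving $C_{\vec{\sigma}_\infty, n}$, use the scaling/normalization of local densities together with the product formula and $\det_H = 2^n\det_G$ to produce the factor $N_{F/\Q}(2\OF)^{n(n+1)/2}/N_{F/\Q}(\I(S))^{(n+1)/2}$, and then sum over the genera and invoke Lemma~\ref{Lem:T_is_M}. The only difference is cosmetic ordering (you apply Lemma~\ref{Lem:T_is_M} first and argue termwise, while the paper derives the per-lattice identity first), so the argument is correct and matches the paper.
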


\begin{proof}

By \cite[eq (5.7), p27]{Ha-Thesis} (which uses the convention that $\sigma_{v,+} \geq \sigma_{v,-}$ for every real place) we see that 
$$
\mathrm{Mass}^{+}(L) 
= 2 \cdot |\Delta_F|^\frac{n(n-1)}{4} \cdot
\prod_{v\mid\infty} \Vol_{Q_v}(C_{v})^{-1}
\cdot 
\prod_\p \beta_{\p}(L, Q)^{-1},
$$
where $\Vol_{Q_v}(C_{v})$ is the volume on the fibre $C_v \subseteq SO(\phi_v)$ defined in \cite[\textsection4]{Ha-Thesis}, computed using the volume form on $SO(Q_v)$.  Since $\prod_{v\mid\infty}\Vol_{\phi_v}(C_{v}) = C_{\vec{\sigma}_\infty, n}$, we can use \cite[Lemma 2.2]{Ha-Thesis} to write 
$$
\prod_{v\mid\infty}\Vol_{Q_v}(C_{v})^{-1}
= \prod_{v\mid\infty} 
	\(\frac{|\det_G(Q)|_v}{\cancel{|\det_G(\phi_v)|_v}} \)^{\frac{n+1}{2}}
	C_{\vec{\sigma}_\infty, n}^{-1}.
$$
We can similarly use \cite[Lemma 2.2]{Ha-Thesis} to express the lattice local densities $\beta_{\p}(L, Q)$ in terms of the local densities of the quadratic form $\psi_\p$ induced by restricting $Q$ to $L_\p$ in some local basis of $L_\p$, giving 
$$
\prod_\p \beta_{\p}(L, Q)^{-1}
= \prod_\p \beta_{\psi_\p, \p}(\psi_\p)^{-1}
	\cdot \prod_\p 
		\(\sqrt{\frac{|\det_G(\psi_\p)|_\p}{|\det_G(Q)|_\p}} \)^{-(n+1)}.
$$
Now
using the product formula and observing that $\prod_\p |\det_G(\psi_\p)|_\p^{-1} = N_{F/\Q}( \frac{1}{2^n}\I(S))$, gives 
$$
\mathrm{Mass}^{+}(L) 
= \frac{2 \cdot |\Delta_F|^\frac{n(n-1)}{4}}{C_{\vec{\sigma}_\infty, n}}
\cdot 
\frac{N_{F/\Q}(\I(S))^\frac{n+1}{2}}{N_{F/\Q}(2\OF)^\frac{n(n+1)}{2}}
\cdot 
\prod_\p \beta_{\psi_\p, \p}(\psi_\p)^{-1}.
$$
Finally, summing over all primitive quadratic rank $n$ lattices with the given signature vector and Hasse invariants at $\p \in \S$ gives $T^*_{\vec{\sigma}_\infty, \c_{\S}; n}(S)$, and using Lemma \ref{Lem:T_is_M} we recover $M^*_{\ve_\infty, \c_{\S}; n}(S)$.
\end{proof}

\begin{cor} \label{Cor:Mass_formula_result_for_globally_rational}
If the signature vector $\vec{\sigma}_\infty$ is totally definite and 
$S \in \SqCl(\A_{F, \mathbf{f}}^\times, U_\mathbf{f})$ is globally rational, 
then
$$
M^*_{\ve_\infty, \c_{\S}; n}(S)  
= \frac{V(n)^{[F:\Q]}}{ |\Delta_F|^\frac{n(n-1)}{4}} 
\cdot
\frac{N_{F/\Q}(2\OF)^\frac{n(n+1)}{2}}{N_{F/\Q}(\I(S))^\frac{n+1}{2}}
\cdot
\sum_{
L \in \mathbf{Cls}^*(S, \vec{\sigma}_\infty, \c_{\S}; n)
}
\frac{1}{|\Aut(L)|}
$$
where $\ve_\infty$ and $V(n)$ are defined in Theorem \ref{Thm:Mass_theorem}.
\end{cor}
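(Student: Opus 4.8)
The plan is to deduce the corollary directly from Theorem~\ref{Thm:Mass_theorem} by performing two reductions: specializing the archimedean constant $C_{\vec{\sigma}_\infty, n}$ to the totally definite case, and converting the sum of proper masses $\sum_G \Mass^{+}(G)$ into the sum $\sum_L |\Aut(L)|^{-1}$ over ordinary classes. First I would note that a totally definite signature vector forces $F$ to be totally real: at a complex place the archimedean orthogonal group has a non-compact factor and no definite form exists there, so the class sum would fail to be finite. Hence the product over complex places in $C_{\vec{\sigma}_\infty, n}$ is empty, and there are exactly $[F:\Q]$ real places.

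Next I would evaluate the real factors. At each real place $v$ total definiteness gives $\min\{\sigma_{v,+},\sigma_{v,-}\} = 0$, so $2^{-n\min\{\sigma_{v,+},\sigma_{v,-}\}/2} = 1$, while $\{\sigma_{v,+},\sigma_{v,-}\} = \{n, 0\}$ in some order. From the definition of $V(r)$ one reads off the empty-product value $V(0) = \tfrac12$, so each real factor becomes
$$
2 \cdot 1 \cdot V(\sigma_{v,+}) V(\sigma_{v,-}) = 2 \cdot V(n) \cdot \tfrac12 = V(n).
$$
Taking the product over the $[F:\Q]$ real places yields $C_{\vec{\sigma}_\infty, n} = V(n)^{[F:\Q]}$.

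The key remaining step, and the part I expect to require the most care, is the identity
$$
\textstyle\sum_{G \in \mathbf{Gen}^*(S, \vec{\sigma}_\infty, \c_{\S}; n)} \Mass^{+}(G)
= 2 \sum_{L \in \mathbf{Cls}^*(S, \vec{\sigma}_\infty, \c_{\S}; n)} \frac{1}{|\Aut(L)|},
$$
relating proper masses (sums of $|\mathrm{SO}(L)|^{-1}$ over proper classes) to the sum of $|\Aut(L)|^{-1} = |\mathrm{O}(L)|^{-1}$ over ordinary classes. I would prove this genus by genus by analyzing how each $\mathrm{O}$-class within a genus decomposes into proper ($\mathrm{SO}$-)classes. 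Since $\mathrm{SO}$ has index $2$ in $\mathrm{O}$, an $\mathrm{O}$-class $[L]$ either contains an improper automorphism, in which case it remains a single proper class with $|\mathrm{SO}(L)| = \tfrac12|\mathrm{O}(L)|$, or it contains no improper automorphism, in which case it splits into two mirror proper classes each with $|\mathrm{SO}(L')| = |\mathrm{O}(L')| = |\mathrm{O}(L)|$. In both cases the total contribution of $[L]$ to $\Mass^{+}(G)$ is exactly $2/|\mathrm{O}(L)|$, so $\Mass^{+}(G) = 2\,\Mass(G) = 2\sum_{L\in G}|\Aut(L)|^{-1}$; summing over the genera in $\mathbf{Gen}^*$ gives the displayed identity.

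Finally I would substitute $C_{\vec{\sigma}_\infty, n} = V(n)^{[F:\Q]}$ and this factor-of-two identity into Theorem~\ref{Thm:Mass_theorem}. The explicit $\tfrac12$ in the theorem's prefactor cancels against the $2$ produced by $\Mass^{+} = 2\,\Mass$, while the normalizing factors $|\Delta_F|^{-n(n-1)/4}$, $N_{F/\Q}(2\OF)^{n(n+1)/2}$ and $N_{F/\Q}(\I(S))^{-(n+1)/2}$ carry over unchanged, producing exactly the stated formula.
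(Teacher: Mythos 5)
Your proposal is correct and follows exactly the paper's (much terser) argument: specialize $C_{\vec{\sigma}_\infty,n}$ to $V(n)^{[F:\Q]}$ in the totally definite case and use $\Mass^{+}(G)=2\,\Mass(G)$, proved by analyzing how each class splits into proper classes, so that the factor of $2$ cancels the $\tfrac12$ in Theorem \ref{Thm:Mass_theorem}. You simply supply the details (the value $V(0)=\tfrac12$, the totally-real reduction, and the class-splitting dichotomy) that the paper leaves implicit.
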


\begin{proof}
This follows from Theorem \ref{Thm:Mass_theorem} by taking all signatures $\sigma_v := (n, 0)$, and noticing that $C_{\vec{\sigma}_\infty, n} = \prod_{v\mid\infty} V(n) = V(n)^{[F:\Q]}$.  We can replace the proper mass by the mass because $\Mass^+(G) = 2\Mass(G)$, which can be proved by analyzing how classes split into proper classes.
\end{proof}

\begin{cor}
Suppose that $\lambda$ gives a family of distinguished squareclasses where each $\lambda(\mathfrak{a})$ is globally rational 
and let $\vec{\sigma}_\infty$ be a totally definite signature vector, 
then the formal Dirichlet series
$$
\sum_{\mathfrak{a} \in I(\OF)} 
\( 
\sum_{
Q \in \mathbf{Cls}^*(\lambda(\mathfrak{a}), \vec{\sigma}_\infty, \c_{\S}; n)
}
\frac{1}{|\Aut(Q)|}
\)
\mathfrak{a}^{-s}
=
\frac{|\Delta_F|^\frac{n(n-1)}{4}}{\( 2^\frac{n(n+1)}{2} \cdot V(n) \)^{[F:\Q]}} 
\cdot
D_{M^*, \lambda, \ve_\infty, \c_\S; n}(s - \tfrac{n+1}{2})
$$
where $\ve_\infty$, $\c_\S$ and $V(n)$ are defined in Theorem \ref{Thm:Mass_theorem}.
The Dirichlet series $D_{M^*, \lambda, \ve_\infty, \c_\S; n}(s)$ is given explicitly in Corollary \ref{Cor:Formal_Dirichlet_M} as a linear combination of two Dirichlet series each of which is defined by an Euler product.
\end{cor}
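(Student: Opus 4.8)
The plan is to reduce the entire statement to the pointwise mass formula already established in Corollary~\ref{Cor:Mass_formula_result_for_globally_rational} and then reassemble the resulting pointwise identity into a formal Dirichlet series. First I would take that corollary, which applies to every globally rational $S$ under the totally definite hypothesis, and solve it for the class sum, obtaining
$$
\sum_{L \in \mathbf{Cls}^*(S, \vec{\sigma}_\infty, \c_{\S}; n)} \frac{1}{|\Aut(L)|}
= \frac{|\Delta_F|^\frac{n(n-1)}{4}}{V(n)^{[F:\Q]}} \cdot \frac{N_{F/\Q}(\I(S))^\frac{n+1}{2}}{N_{F/\Q}(2\OF)^\frac{n(n+1)}{2}} \cdot M^*_{\ve_\infty, \c_{\S}; n}(S).
$$
The hypothesis that each $\lambda(\mathfrak{a})$ is globally rational is exactly what licenses applying this identity with $S = \lambda(\mathfrak{a})$.

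Next I would specialize to $S = \lambda(\mathfrak{a})$ and invoke the defining property of a distinguished family from Definition~\ref{Def:distinguished_squareclasses}, namely $\I(\lambda(\mathfrak{a})) = \mathfrak{a}$, so that $N_{F/\Q}(\I(S)) = N_{F/\Q}(\mathfrak{a})$. The prefactor $|\Delta_F|^\frac{n(n-1)}{4} \big/ \bigl( V(n)^{[F:\Q]} N_{F/\Q}(2\OF)^\frac{n(n+1)}{2} \bigr)$ does not depend on $\mathfrak{a}$, so after multiplying by $\mathfrak{a}^{-s}$ and summing over $\mathfrak{a}\in I(\OF)$ it pulls out of the sum. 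The remaining factor $N_{F/\Q}(\mathfrak{a})^\frac{n+1}{2}$ is absorbed by the usual formal-Dirichlet-series convention in which $\mathfrak{a}^{-s}$ stands for $N_{F/\Q}(\mathfrak{a})^{-s}$, so that $N_{F/\Q}(\mathfrak{a})^\frac{n+1}{2}\,\mathfrak{a}^{-s} = \mathfrak{a}^{-(s - \frac{n+1}{2})}$; this is precisely the shift carrying $\sum_{\mathfrak{a}} M^*_{\ve_\infty, \c_{\S}; n}(\lambda(\mathfrak{a}))\,\mathfrak{a}^{-s}$ into $D_{M^*, \lambda, \ve_\infty, \c_\S; n}(s - \tfrac{n+1}{2})$, as defined in Definition~\ref{Defn:Associated_Dirichlet_series}.

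Finally, the only remaining simplification is the elementary identity $N_{F/\Q}(2\OF) = |\OF/2\OF| = 2^{[F:\Q]}$, whence $N_{F/\Q}(2\OF)^\frac{n(n+1)}{2} = \bigl(2^\frac{n(n+1)}{2}\bigr)^{[F:\Q]}$, which combines with $V(n)^{[F:\Q]}$ to produce the asserted denominator $\bigl(2^\frac{n(n+1)}{2} \cdot V(n)\bigr)^{[F:\Q]}$. Because every ingredient is an equality proven earlier, this is a direct computation rather than an argument with a genuine obstacle; the one point demanding care is the interpretation of the spectral shift $s\mapsto s-\tfrac{n+1}{2}$ through the convention $\mathfrak{a}^{-s}\leftrightarrow N_{F/\Q}(\mathfrak{a})^{-s}$, together with the observation that it is exactly the distinguished-squareclass relation $\I(\lambda(\mathfrak{a})) = \mathfrak{a}$ that makes the $N_{F/\Q}(\I(S))^\frac{n+1}{2}$ factor track the ideal norm $N_{F/\Q}(\mathfrak{a})^\frac{n+1}{2}$ needed to realize that shift.
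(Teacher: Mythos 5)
Your proposal is correct and matches the paper's own (one-line) proof, which simply derives the identity directly from Corollary \ref{Cor:Mass_formula_result_for_globally_rational} by specializing to $S = \lambda(\mathfrak{a})$, using $\I(\lambda(\mathfrak{a})) = \mathfrak{a}$, and summing against $\mathfrak{a}^{-s}$. The bookkeeping you spell out --- the shift $s \mapsto s - \tfrac{n+1}{2}$ from the $N_{F/\Q}(\I(S))^{\frac{n+1}{2}}$ factor and the identity $N_{F/\Q}(2\OF) = 2^{[F:\Q]}$ producing the denominator $\bigl(2^{\frac{n(n+1)}{2}} V(n)\bigr)^{[F:\Q]}$ --- is exactly the computation the paper leaves implicit.
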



\begin{proof}
This follows directly from Corollary \ref{Cor:Mass_formula_result_for_globally_rational}. 
\end{proof}




\section{Results for binary quadratic forms} \label{Sec:Binary_Forms}

In this section we work out the formal non-archimedean squareclass series for binary quadratic forms, which is closely related to class numbers of quadratic rings and modular forms of weight $\frac{3}{2}$.

\begin{lem} \label{Lem:Count_binary_Jordan_and_PLGS}
When $n=2$ there are exactly two Jordan block structures and five partial local genus symbols.  Explicitly, they are $\{(2), (1,1)\}$ and $\{(\bar{2}), (2), (1,1), (1;1), (1::1) \}$.
\end{lem}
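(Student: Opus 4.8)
The plan is to prove Lemma \ref{Lem:Count_binary_Jordan_and_PLGS} by direct enumeration from Definition \ref{Defn:Jordan_structure_and_PLGS}, since for $n=2$ the combinatorics is small enough to list exhaustively. I would organize the argument by first counting the Jordan block structures, and then building the partial local genus symbols on top of each one by applying the decoration rules.

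First I would handle the Jordan block structures. By definition these are tuples $(n_1, \dots, n_r)$ of positive integers with $\sum_i n_i = 2$, i.e. the ordered compositions of $2$. I would observe that there are exactly two of these, namely $(2)$ with $r=1$ and $(1,1)$ with $r=2$, giving the first asserted set $\{(2), (1,1)\}$.

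Next I would pass to the partial local genus symbols, which Definition \ref{Defn:Jordan_structure_and_PLGS} obtains from a Jordan block structure by (i) choosing each separator between adjacent parts among the three symbols comma, semicolon, or double-colon, (ii) optionally barring each even part $n_i \in 2\N$, subject to (iii) the restriction that a semicolon may only occur between two unbarred parts. I would run through the two underlying Jordan block structures in turn. For $(2)$ there are no separators to choose (there is a single part), and since the unique part $n_1 = 2$ is even, rule (ii) allows it to be barred or not; this yields exactly the two symbols $(2)$ and $(\bar 2)$. For $(1,1)$ both parts equal $1$, which is odd, so rule (ii) forbids any barring, leaving only the single separator to decorate; by (i) it may be a comma, semicolon, or double-colon, and since both adjacent parts are necessarily unbarred, the semicolon is admissible under (iii). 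This yields the three symbols $(1,1)$, $(1;1)$, and $(1::1)$.

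Collecting the two cases gives $2 + 3 = 5$ partial local genus symbols, precisely $\{(\bar 2), (2), (1,1), (1;1), (1::1)\}$, as claimed. I do not expect any substantive obstacle; the only point requiring care is the bookkeeping of the decoration rules, specifically verifying that the barring option is available only for the even part appearing in $(2)$ and that the semicolon in $(1,1)$ is legitimate exactly because both parts are odd and hence forced to be unbarred.
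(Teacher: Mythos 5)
Your proposal is correct and follows exactly the same route as the paper, whose proof is just the one-line observation that the lemma ``follows from Definition \ref{Defn:Jordan_structure_and_PLGS} by counting and decorating the ordered partitions of $n=2$''; you have simply carried out that enumeration explicitly, including the correct observation that the semicolon is admissible in $(1;1)$ precisely because the odd parts cannot be barred.
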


\begin{proof}
This follows from Definition \ref{Defn:Jordan_structure_and_PLGS}
by counting and decorating the ordered partitions of $n=2$.
\end{proof}

\begin{lem}[Distributions of Hasse invariants]  
Suppose that $G_\p$ varies over all primitive local genera of $\Op$-valued quadratic forms in 2 variables where $\det_H(G_\p)$ is fixed.  The distribution of Hasse invariants $c_\p$ arising from genera $G_\p$ above with fixed Jordan block structures (for $\p\nmid 2$) is given by
$$
\begin{tabular}{c | c | c | c | c}
\text{Jordan block structure} & $\det_H(G_\p)$ & $c_\p =1$ & $c_\p =-1$ & Conditions\\
\hline
$(2)$ & -- & $1$ & $0$ & --\\
\hline
\multirow{2}{*}{$(1,1)$} & \multirow{2}{*}{$u \pi_\p^\nu$} & $1$ & $1$ & if $\nu$ is odd \\
\cline{3-5}
    & & $2$ & $0$ &  if $\nu$ is even \\
\end{tabular}
$$
Similarly, the distribution of Hasse invariants arising from genera $G_\p$ above with fixed partial local genus symbols (when $\p\mid 2$ and $F_\p = \Q_2$) is given by 
$$
\begin{tabular}{c | c | c | c | c}
\text{Partial local genus symbols} & $\det_H(G_\p)$ & $c_\p =1$ & $c_\p =-1$ & Conditions\\
\hline
\multirow{2}{*}{$(\bar{2})$} & \multirow{2}{*}{$u$} & $0$ & $0$ & if $u \equiv 1_{(4)}$ \\
\cline{3-5}
    & & $0$ & $1$ &  if $u \equiv 3_{(4)}$ \\
\hline
\multirow{2}{*}{$(2)$} & \multirow{2}{*}{$u$} & $1$ & $1$ & if $u \equiv 1_{(4)}$ \\
\cline{3-5}
    & & $1$ & $0$ &  if $u \equiv 3_{(4)}$ \\
\hline
%
$(1,1)$ & -- & $1$ & $1$ & --\\
\hline
\multirow{2}{*}{$(1;1)$} & \multirow{2}{*}{$u$} & $1$ & $1$ & if $u \equiv 1_{(4)}$ \\
\cline{3-5}
    & & $2$ & $0$ &  if $u \equiv 3_{(4)}$ \\
\hline
\multirow{2}{*}{$(1::1)$} & \multirow{2}{*}{$u \pi_\p^\nu$} & $2$ & $2$ & 
	if $\nu$ is odd or $u \equiv 1_{(4)}$ \\
\cline{3-5}
    & & $4$ & $0$ &  if $\nu$ is even and $u \equiv 3_{(4)}$ \\
\end{tabular}
$$
\end{lem}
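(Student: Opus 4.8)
The plan is to prove each tabulated value by fixing the determinant squareclass $\det_H(G_\p)$ and enumerating the genera refining each of the finitely many combinatorial types supplied by Lemma~\ref{Lem:Count_binary_Jordan_and_PLGS} --- the two Jordan block structures when $\p\nmid2$ and the five partial local genus symbols when $F_\p=\Q_2$ --- recording for each genus its Hasse invariant $c_\p$ by an explicit Hilbert-symbol computation.

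At a prime $\p\nmid2$ this is clean. By \cite[\textsection92:2]{OM} the $\Op$-isometry classes carrying a fixed Jordan block structure are parametrized by decorating each Jordan constituent with the valuation-zero squareclass of its determinant, the total determinant being the product, and the Hasse invariant is the associated product of Hilbert symbols, which simplifies using $(u,u')_\p=1$ and $(u,\pi_\p)_\p=\leg{u}{\p}$ for units $u,u'$. For the unimodular structure $(2)$ the decoration is forced by $\det_H(G_\p)$, giving a single genus with $c_\p=(a,c)_\p=1$. For $(1,1)$, i.e. $\langle a\rangle\perp\langle b\pi_\p^\nu\rangle$ with $\nu=\ord_\p(\det_H(G_\p))$, the Hasse invariant is $\leg{a}{\p}^\nu$; fixing the unit part $ab$ of $\det_H$ leaves exactly two genera, and the fact that $\leg{a}{\p}^\nu$ equals $1$ on both (when $\nu$ is even) or takes opposite signs (when $\nu$ is odd) produces the rows $2,0$ and $1,1$. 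This gives the first table.

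The dyadic case $F_\p=\Q_2$ carries the real content. Here I would use the Conway--Sloane $2$-adic symbols together with the dictionary of Remark~\ref{Rem:CS_PLGS_translation}: each partial local genus symbol refines into genuine genera by assigning a sign $\epsilon=\pm$ and (to each Type~I constituent) an oddity $t\bmod8$, where two decorations determine the same genus precisely when they are related by oddity fusion within a compartment and sign walking along a train. For each resulting genus I would write down an explicit binary $\Z_2$-form --- diagonal blocks $\langle a\rangle\perp\langle2^jc\rangle$ for the Type~I symbols, and the even lattices $A(0,0)$, $A(2,2\rho)$ rescaled to their primitive $\Op$-valued representatives of scale $\tfrac12\Op$ for the barred symbol $(\bar2)$ --- and compute its determinant squareclass and Hasse invariant from the dyadic Hilbert symbols $(2,u)_\p=(-1)^{(u^2-1)/8}$ and $(u,u')_\p=(-1)^{\frac{u-1}2\frac{u'-1}2}$. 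Binning the genera by the residue $u\bmod4$ of the unit part (this accounts, for instance, for the emptiness of the $(\bar2)$ rows with $u\equiv1\pmod4$, since a primitive binary form whose raw Hessian determinant is a unit must have odd cross term and hence $\det_H\equiv-1\pmod4$) and, for $(1::1)$, additionally by the parity of $\nu=\ord_\p(\det_H)$, then yields the second table.

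The main obstacle is exactly this dyadic bookkeeping, for three reasons. First, the naive count of $(\epsilon,t)$-decorations overcounts, and one must apply oddity fusion and sign walking correctly so that each genus --- equivalently each $\Z_2$-isometry class --- is counted once; it is precisely these relations that collapse the many apparent decorations into the small counts ($1$, $2$, or $4$) in the table. Second, the Type~II (barred) constituents force one to pass between the primitive form $G_\p$ and its integral even companion $2G_\p$, whose Hasse invariants differ by the factor computed in Lemma~\ref{Lem:Hilbert_symbol_scaling}, so the convention must be applied uniformly. Third, one must confirm that the genera genuinely split along the coarse invariants $u\bmod4$ and the parity of $\nu$ rather than along finer squareclass data. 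Once these dyadic symbol computations are carried out constituent by constituent, all the tabulated cases follow by direct evaluation.
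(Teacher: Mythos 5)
Your proposal is correct and follows essentially the same route as the paper's proof: for $\p\nmid 2$ it decorates the Jordan constituents with unit squareclasses per O'Meara \S92:2 and evaluates $c_\p=\leg{u_1}{\p}^\nu$, and for $F_\p=\Q_2$ it enumerates sign/oddity decorations of the Conway--Sloane symbols (modulo oddity fusion and sign walking), computing Hasse invariants by explicit dyadic Hilbert symbols and binning by $u\bmod 4$ and the parity of $\nu$. The paper organizes the dyadic case slightly more economically by tracking how a unit rescaling $Q\mapsto uQ$ changes $c_\p$ via $(u,u)_\p(u,2^\nu)_\p(u,ab)_\p$, but this is the same computation you describe.
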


\begin{proof}

{\bf Case 1:} When $\p\nmid 2$, the (modular) Jordan blocks are summands determined up to isomorphism by their determinant squareclass (and dimension and scale).  The Jordan block structure $J = (2)$ has one block and $Q \sim_{\Op} x^2 + uy^2$, so there is one genus for each allowed determinant $u$.  Here the Hasse invariant is $c_\p = (1,u)_\p= 1$.  
When $J = (1,1)$ then $Q \sim_{\Op} u_1x^2 + \pi_\p^\nu u_2 y^2$, where the $u_i\in \Op^\times$ can be freely chosen so that $u_1u_2 = u$ is fixed and $\nu$ is determined by the determinant.  This gives two forms, and their Hasse invariants are given by the Hilbert symbol 
$$
c_\p(Q) = (u_1, \pi_\p^\nu u_2)_\p 
= \cancel{(u_1, u_2)_\p} \cdot (u_1, \pi_\p)^\nu_\p 
= \leg{u_1}{\p}^\nu.
$$
When $\nu$ is even this is 1, but when $\nu$ is odd this takes both values $\pm1$ once.

\smallskip
{\bf Case 2:} Now suppose that $\p\mid 2$ and $F_\p = \Q_2$, so $\Op = \Z_2$ and we can use the ``train/compartment'' invariant theory of quadratic forms over $\Z_2$ in \cite[\textsection7.5, p381--2]{CS-book} to enumerate local genera.  This amount to decorating the partial local genus symbols (translated via Remark \ref{Rem:CS_PLGS_translation}) with ``signs'' and ``oddities'' to obtain a local genus symbol.  Note that in general a change of sign does not affect the reduction of the determinant (mod 4), and allows us to freely vary among these two squareclasses.
When $J = (\bar{2})$ then there are exactly two quadratic forms, having determinants 3 and 7 (mod 8), both with $c_\p = -1$.  When $J = (2)$ then there are three possible oddities and one choice of sign.  When $u \equiv 1$ (mod 4) we have two oddities have $c_\p =\pm1$, but when $u \equiv 3$ (mod 4) then we have one oddity with $c_\p = 1$.  

If there are no barred numbers in the partial local genus symbol $J$, then it corresponds to a diagonalizable quadratic form $Q \sim_{\Op} ax^2 + 2^\nu by^2$ for some $a,b \in \Op^\times$.  
Also all forms of this determinant are scalings of $Q$ by some $u \in \SqOp$, 
and we can easily compute their Hasse invariants using the formula
$$
c_\p(u Q) = (u a, u 2^\nu b)_\p 
= (u,u)_\p 
\cdot 
(u, 2^\nu)_\p 
\cdot 
(u, ab)_\p 
\cdot 
(a,2^\nu b)_\p.
$$

When $J = (1,1)$ we have four possible oddities $(\{0,2,4,6\})$ and two signs, and we see that the change of oddity and sign to preserve the determinant preserves $(u,u)_\p$, but reverses $(u, 2)_\p$, so we have $c_\p = \pm1$ once for each determinant.  
When $J = (1;1)$ then $\nu = 2$ so scaling to account for different oddities (with fixed sign) changes $c_\p$ by $(u,u)_\p \cdot (u,ab)_\p$, which is constant for all $u$ iff $ab \equiv u \equiv 3$ (mod 4).  By choosing $a\equiv 1$ (mod 8) we see that we always have at least one $c_\p = 1$, giving the $c_\p$-distribution above.
Finally when $J=(1::1)$ we have two choices of sign and four choices of unsigned oddities, giving four possible rescaling. When $\nu$ is odd or $ab \equiv 1$ (mod 4) we can use the previous cases to see that $c_\p$ will change sign, and so be equibistributed, otherwise all $c_\p$ values are 1. 
\end{proof}

\begin{lem}[$\p$-masses and local densities]
Given a primitive local genus $G_\p$ of non-degenerate binary quadratic forms with associated ideal $\I(\det_H(G_\p)) = \p^\nu$,  
the $\p$-mass $m_\p(G_\p)$ given by replacing $p$ by $q$ in \cite[eq (3), p263]{CS} is related to the inverse local density $\beta^{-1}_{G_\p, \p}(G_\p)$ by the formula
$$
\beta^{-1}_{G_\p, \p}(G_\p) = 2m_\p(G_\p) \cdot q^{-\frac{3\nu}{2} + 3 \ord_\p(2)},
$$
where either $\p\nmid 2$ or $\p\mid 2$ and $F_\p = \Q_2$.
\end{lem}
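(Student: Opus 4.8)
The plan is to identify the Conway--Sloane $\p$-mass $m_\p(G_\p)$ with a suitably normalized reciprocal of the self-representation local density $\beta_{G_\p, \p}(G_\p)$ by comparing the two mass formulas that both compute the proper mass $\Mass^+(G)$ of a positive definite genus: the Conway--Sloane mass formula of \cite{CS} (with $p$ replaced by $q$), which expresses $\Mass^+(G)$ as an explicit archimedean ``standard mass'' times a product $\prod_\p 2 m_\p(G_\p)$ of purely local factors, and Siegel's mass formula in the form of Theorem \ref{Thm:Mass_theorem} (derived from \cite{Ha-Thesis}), which expresses it as the archimedean gamma-volume factor $C_{\vec{\sigma}_\infty, n}$ times the global normalization constant of that theorem times $\prod_\p \beta_{G_\p, \p}(G_\p)^{-1}$. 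Indeed Conway--Sloane's $m_\p$ is by design a normalized reciprocal local density, so the content of the lemma is the precise identification of the normalizing constant; matching the two products and cancelling the common archimedean factor reduces everything to a comparison of a single Euler factor at $\p$.

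First I would reduce to the case $F=\Q$ with $q=p$ a rational prime. Both $m_\p(G_\p)$ and $\beta_{G_\p, \p}(G_\p)$ depend on the local data at $\p$ only through $q = |k_\p|$ and the local genus symbol of $G_\p$: for $\p\nmid 2$ this follows from the odd-residue-characteristic invariant theory \cite[\textsection92:2]{OM}, and for $F_\p = \Q_2$ from the $\Z_2$ genus theory of \cite[\textsection7.3--6]{CS-book} (the cases enumerated in Lemma \ref{Lem:Count_binary_Jordan_and_PLGS}). Hence both sides of the asserted identity are the same explicit functions of $q$ and $\nu$, and it suffices to verify the functional identity, which we may do at $q=p$.

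Next I would extract the per-prime relation. Equating the two global mass formulas, cancelling the archimedean factor $C_{\vec{\sigma}_\infty, n}$ and the global constant, and using that at a generic unimodular prime $\p\nmid 2$ both local factors take known values (the binary generic density $1\pm q^{-1}$ from the generic local density computation of Section 3, and the corresponding generic value of $m_\p$), the multiplicativity of both local families over primes forces $\beta_{G_\p, \p}(G_\p)^{-1} = \kappa_\p \cdot 2 m_\p(G_\p)$ for a single local constant $\kappa_\p$ depending only on $q$ and $\nu$. Finally I would identify $\kappa_\p$: the factor $2$ is the Conway--Sloane normalization of $2m_\p$ measured against Siegel's leading $2$, while the remaining $q$-power is exactly the $\p$-component of the global determinant-normalization factor $N_{F/\Q}(2\OF)^{n(n+1)/2}/N_{F/\Q}(\I(S))^{(n+1)/2}$ of Theorem \ref{Thm:Mass_theorem}, whose $\p$-part for $n=2$ is $q^{\,3\ord_\p(2) - 3\nu/2}$. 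This yields the claimed equality $\beta_{G_\p, \p}(G_\p)^{-1} = 2 m_\p(G_\p)\cdot q^{-\frac{3\nu}{2} + 3\ord_\p(2)}$.

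The main obstacle will be the normalization bookkeeping rather than any deep structure. Three conventions must be reconciled precisely: the Hessian-versus-Gram determinant convention, which is the source of the $3\ord_\p(2) = \tfrac{n(n+1)}{2}\ord_\p(2)$ term via the relation $\det_H = 2^n \det_G$; the measure normalization by which $m_\p$ is built from a reciprocal local density, which supplies the per-determinant power $q^{-3\nu/2} = q^{-\frac{n+1}{2}\nu}$; and the dyadic case $F_\p = \Q_2$, where one must work with the $\Z_2$ genus symbols and the stated-but-unproven local density formula of \cite{CS} and check that both the factor $2$ and the $q$-power emerge identically. One should also confirm that the reduction from $q=p$ to general $q$ is legitimate, namely that both quantities genuinely are the same function of $q$ in each of the two regimes $\p\nmid 2$ and $F_\p = \Q_2$.
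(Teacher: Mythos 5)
Your route is genuinely different from the paper's, and more elaborate than it needs to be. The paper's proof is a two-line local argument: it quotes the relation $2\,\beta_{Q,\p}(Q)\, m_\p(Q) = q^{\frac{1}{2}(n+1)\,\ord_\p(\I(\det_G(Q)))}$ directly from \cite[\textsection12, pp281-2]{CS} (adapted to $\p\nmid 2$ or $F_\p=\Q_2$), and then converts Gram to Hessian determinants via $\det_H = 2^n\det_G$, so that $\ord_\p(\I(\det_G)) = \nu - 2\ord_\p(2)$ and the exponent becomes $\tfrac{3\nu}{2} - 3\ord_\p(2)$. You instead propose to recover the local relation by equating two \emph{global} mass formulas (Conway--Sloane's and the Siegel-type formula of Theorem \ref{Thm:Mass_theorem}) and cancelling archimedean factors. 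You do correctly identify where both pieces of the exponent come from --- the $3\ord_\p(2)$ from the Hessian/Gram conversion and the $3\nu/2$ from the $(n+1)/2$ power of the determinant --- which is exactly the content of the paper's one-line computation.

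The weak point of your approach is the step ``multiplicativity forces $\beta_{G_\p,\p}(G_\p)^{-1} = \kappa_\p\cdot 2m_\p(G_\p)$ for a single local constant $\kappa_\p$.'' An equality of Euler products over all genera only determines the local constants up to a family $(\kappa_\p)$ with $\prod_\p \kappa_\p$ fixed; to pin down $\kappa_\p$ at each \emph{individual} prime you need an independent evaluation of both local factors at that prime. Your anchor at $\p\nmid 2$ (generic unimodular factors) handles the odd primes, but at $\p\mid 2$ the constant is then only determined residually, i.e.\ by whatever makes the global constants match --- and tracking the Conway--Sloane archimedean ``standard mass'' and its conventions precisely enough to do this is at least as much bookkeeping as simply reading off the local statement in \cite[\textsection12]{CS}, which is what the paper does. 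So your argument can be completed, but its hardest step is exactly the citation the paper uses as its starting point; if you are allowed to quote that local relation, the global detour is unnecessary.
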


\begin{proof}
The formula \cite[\textsection12, pp281-2]{CS} adapted to $\p\nmid 2$ or where $F_\p = \Q_2$ gives 
$$
2 \beta_{Q,\p}(Q) m_\p(Q) = q^{\frac{1}{2}\textstyle (n+1)\cdot \ord_\p(\I(\det_G(Q)))},
$$
and the formula follows since $\p^\nu = \det_H(G_\p) = 2^n \cdot \det_G(G_\p)$.
\end{proof}

\begin{rem}
The $p$-mass formulas given in \cite[eq (3) and \textsection12]{CS} are still valid for any $\p\nmid 2$ because they have been independently derived in \cite{Pall}, and the argument given there holds for computing the (non-degenerate) local densities $\beta_{Q,\p}(Q)$ for any $\p\nmid 2$ by replacing the expression ``$p$'' there by either $\p$ or $q$ as appropriate.
\end{rem}

We now adopt a particularly convenient convention for defining the generic local densities when no normalized local genus exists (see Definition \ref{Def:generic_density_and_product}).

\begin{defn} \label{Def:generic_local_density_convention_at_n=2}
Suppose that $p=2$ splits completely in $F$, and $n=2$.  Then for normalized $\widetilde{S_\p} \in \SqFpInt$ where 
$\p\mid 2$ and $\widetilde{S_\p} \equiv 1_{(4)}$, we set 
$$
\beta^{-1}_{n=2; \p}(\widetilde{S_\p}) 
:= 2 \cdot \gamma_\p(\widetilde{S_\p})^{-1}
= \frac{2}{1 - \frac{\chi_{\widetilde{S_\p}}(\p)}{2}}
= \frac{2}{1 - \frac{1}{2}\leg{\widetilde{S_\p}}{2}},
$$
where $\gamma_\p(u) := 1 - \frac{\chi_{u}(\p)}{2}$, $\chi_{u}(\p) := \leg{-u}{\p}$, and $\leg{u}{2}$ is the extended Kronecker symbol defined as 
$$
\leg{u}{2} := 
\begin{cases}
1 & \qquad\text{if $u \equiv \pm 1_{(8)}$,} \\
-1 & \qquad\text{if $u \equiv \pm 3_{(8)}$,} \\
0 & \qquad\text{otherwise.} \\
\end{cases}
$$
In the proof of Theorem \ref{Thm:Explicit_A_and_B_for_n=2} we will see that this definition also gives $\beta^{-1}_{n=2; \p}(\widetilde{S_\p})$  when $\widetilde{S_\p} \equiv -1_{(4)}$, and it is this uniformity  motivates our choice here.
\end{defn}

We now compute the normalized Euler factors at $\p\nmid 2$ and also at $\p\mid2$ where $F_\p =\Q_2$. 

\begin{thm}  \label{Thm:Explicit_A_and_B_for_n=2}
When $n=2$ and the prime $p=2$ splits completely in $F$, we have explicit rational functions for the 
``fixed unit'' local Dirichlet series described in Theorem \ref{Thm:Rationality_of_Euler_factors}, 
given by 
$$
\sum_{\nu=0}^\infty \frac{A^*_{\p; 2}(u\, \pi_\p^\nu)}{q^{\nu s}} = 
\begin{cases}
\frac{1 - \chi_u(\p) \, q^{-(s+2)}}{1 - q^{-(s+1)}} & \qquad \text{if $\p\nmid 2$,} \\
\frac{1 - \chi_u(\p) \, q^{-(s+3)}}{1 - q^{-(s+1)}} & \qquad \text{if $\p\mid 2$ and $F_\p = \Q_2$,} \\
\end{cases}
$$
and 
$$
\sum_{\nu=0}^\infty \frac{B^*_{\p; 2}(u\, \pi_\p^\nu)}{q^{\nu s}} = 
\begin{cases}
\frac{1 - \chi_u(\p) \, q^{-(2s+3)}}{1 - q^{-(2s+2)}} & \qquad \text{if $\p\nmid2$,} \\
0 & \qquad \text{if $\p\mid 2$, $F_\p = \Q_2$, and $u\equiv 1$ (mod $4\Op$),} \\
\frac{-1 + 2 q^{-(2s+2)} - \chi_u(\p) \, q^{-(2s+3)}}{1 - q^{-(2s+3)}} & \qquad \text{if $\p\mid 2$, $F_\p = \Q_2$, and $u\equiv 3$ (mod $4\Op$),} \\
\end{cases}
$$
where $\chi_u(\p) := \leg{-u}{\p}$
and where  $\textstyle \leg{\cdot}{\p}$ is defined as the non-trivial quadratic character on $\SqCl(k_\p^\times)$ when $\p\nmid 2$, and as the Kronecker symbol at $\p\mid2$ when $F_\p = \Q_2$ (i.e. which takes values 1 if $u\equiv \pm1_{(8)}$ and $-1$ if $u\equiv \pm3_{(8)})$.  
\end{thm}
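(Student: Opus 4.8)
The plan is to reduce the two generating series to an explicit finite bookkeeping over local genera, following the machinery of Definition \ref{Def:normalized_mass_sums} and the rationality argument of Theorem \ref{Thm:Rationality_of_Euler_factors}. By Lemma \ref{Lem:Count_binary_Jordan_and_PLGS} there are only two Jordan block structures when $\p\nmid 2$ and five partial local genus symbols when $\p\mid 2$ and $F_\p=\Q_2$, so for each fixed normalized unit $u$ and each $\nu$ I would first list which genera $G_\p$ satisfy $\det_H(G_\p)=u\pi_\p^\nu$, read off how they split between $c_\p=+1$ and $c_\p=-1$ from the preceding lemma on distributions of Hasse invariants, and then evaluate each normalized inverse density $\beta_{n,\p}(\widetilde{S_\p})/\beta_{G_\p,\p}(G_\p)$. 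For the last step I would use the $\p$-mass/local-density lemma above, which rewrites $\beta^{-1}_{G_\p,\p}(G_\p)$ as $2\,m_\p(G_\p)\,q^{-3\nu/2+3\ord_\p(2)}$, so that the ratio collapses to $m_\p(G_\p)/m_\p(G^{\mathrm{norm}}_\p)$ times $q^{-3\nu/2}$ with $m_\p$ the explicit Conway--Sloane mass. Forming $\widetilde{M}^{*,+}_{\p;2}\pm\widetilde{M}^{*,-}_{\p;2}$ and summing over $\nu$ then gives, after the Ferrers-diagram/affine reindexing used in Theorem \ref{Thm:Rationality_of_Euler_factors}, a finite sum of geometric series that I would match to the claimed rational functions.

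For $\p\nmid 2$ this is clean. The normalized genus is the unimodular $(2)$ form $x^2+uy^2$, whose density I would compute directly from the order of $\SO_2^{\pm}$ over $k_\p$ as $\beta_{2,\p}(u)=1-\chi_u(\p)/q$ with $\chi_u(\p)=\leg{-u}{\p}$, which is exactly the character in the statement; by Lemma \ref{Lem:A_and_B_generically_one} this already forces $A^*=B^*=1$ at $\nu=0$. For $\nu\geq 1$ only the $(1,1)$ structure occurs, always with two genera of equal density, so $A^*(u\pi_\p^\nu)=q^{-\nu}(1-\chi_u(\p)/q)$, while the parity of $\nu$ governs $B^*$: for $\nu$ odd the two genera carry opposite Hasse invariants and cancel, and for $\nu$ even both have $c_\p=+1$, giving $B^*=A^*$. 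Feeding these into the two generating series (the $B^*$ series being supported on even $\nu$, hence a series in $q^{-2s}$) and checking the telescoping that turns $1+(1-\chi_u(\p)/q)\,q^{-(2s+2)}/(1-q^{-(2s+2)})$ into $(1-\chi_u(\p)\,q^{-(2s+3)})/(1-q^{-(2s+2)})$ recovers the $\p\nmid 2$ formulas.

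The genuinely hard part will be $\p\mid 2$ with $F_\p=\Q_2$. Here I would extract both the Hasse distribution and the mass $m_\p$ from the Conway--Sloane ``train/compartment'' formalism, which is where the dependence on $u\bmod 8$ (through the Kronecker symbol $\leg{-u}{2}$) enters via oddities and sign-walking. Two features demand care. First, by Remark \ref{Rem:Local_genus_existence_when_2_splits_completely} a normalized ($\nu=0$) genus exists only when $u\equiv -1$, so for $u\equiv 1\pmod 4$ the ``generic density'' is not an honest density and must be supplied by the convention of Definition \ref{Def:generic_local_density_convention_at_n=2}; I would verify that this is precisely the value making the $A^*$ Euler factor take the single stated form for all $u$, and that it reproduces the true density when $u\equiv 3\pmod 4$ (the consistency check promised in that definition). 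Second, I would isolate the $\ve$-parity structure: for $u\equiv 1\pmod 4$ every contributing symbol splits evenly between $c_\p=\pm1$, whence $\widetilde{M}^{*,+}_{\p;2}=\widetilde{M}^{*,-}_{\p;2}$ and $B^*\equiv 0$, while for $u\equiv 3\pmod 4$ the imbalanced symbols ($(\bar 2),(2),(1;1)$, and $(1::1)$ at even scale) survive and assemble into the two-term numerator of the stated $B^*$.

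Finally I would combine the conventional generic density with the per-symbol mass ratios to obtain $A^*(u\pi_\p^\nu)$ and $B^*(u\pi_\p^\nu)$ as explicit functions of $\nu$ and $u\bmod 8$, and sum the two series as geometric series in $q^{-s}$ (respectively $q^{-2s}$ for $B^*$), matching the claimed rational functions. The main obstacle throughout is the dyadic genus-symbol bookkeeping: correctly enumerating the oddity/sign decorations, tracking which scale configurations are actually realized by primitive $\Op$-valued binary forms, and handling the non-existent normalized genera via the convention. Once these local data are pinned down the summation itself is routine.
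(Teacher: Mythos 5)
Your proposal is correct and follows essentially the same route as the paper: the paper's proof is exactly the case-by-case tabulation over the two Jordan block structures (for $\p\nmid 2$) and five partial local genus symbols (for $F_\p=\Q_2$), using the preceding lemmas on Hasse-invariant distributions and the $\p$-mass/local-density relation, together with the generic-density convention of Definition \ref{Def:generic_local_density_convention_at_n=2}, followed by summing the resulting geometric series in $q^{-s}$ (and $q^{-2s}$ for $B^*$). The subtleties you flag --- the non-existent normalized genus when $u\equiv 1\pmod 4$ at $\p\mid 2$, and the parity/congruence structure controlling when $B^*$ vanishes --- are precisely the points the paper's tables handle.
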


\begin{proof}
For convenience, we let $\gamma_{\p}(u) := \textstyle{1 - \frac{\chi_u(\p)}{q}}$. 
From Lemma \ref{Lem:Count_binary_Jordan_and_PLGS},
when $\p\nmid 2$ there are two cases to consider and when $F_\p = \Q_2$ there are five cases to consider.  The normalized densities and Hasse invariant distributions for each of these cases, and local factor computations are given in the following tables:

\bigskip

\resizebox{0.98\hsize}{!}{
\centering
\hspace{-0.2in} 
\begin{tabular}{  | c |  c  | c |  c  |  c  | c  | c  | c | c |}
\multirow{2}{*}{prime $\p$} & \multirow{2}{*}{\text{\#}}   
    & Allowed & Jordan& \multirow{2}{*}{Cases }& \multicolumn{2}{c|}{\# of Hasse Invariants}
    & \multirow{2}{*}{$\p$-mass $m_\p$} & Normalized Densities
\\
 &  & $\nu = \ord_\p(S)$ & Blocks & & $\hspace{.13in}c_\p = 1\hspace{.13in}$ & $c_\p = -1$ 
    & & $\gamma_\p(u) \cdot\beta_{Q, \p}^{-1}(Q)$ 
\\
 \hline
\multirow{3}{*}{$\p\nmid 2$} 
& 1 & \multirow{1}{*}{$\nu = 0$} & I${}_2$ 
	& -- & 1 & 0 & $\frac{1}{2}\gamma_\p(u)^{-1}$ & \multirow{1}{*}{1}\\
\cline{2-9}
 & \multirow{2}{*}{2} & \multirow{2}{*}{$\nu \geq 1$} & \multirow{2}{*}{I{}$_1 \oplus $I${}_1$} & $\nu$ odd 
    & 1 & 1 & \multirow{2}{*}{$\frac{1}{4}\,q^{\frac{\nu}{2}}$} 
    &\multirow{2}{*}{$\frac{1}{2}\, q^{-\nu} \cdot \gamma_\p(u)$} 
    \\
 \cline{5-7}
 & &  & & $\nu$ even  
    & 2  & 0 & &
    \\
\hline
\hline
\multirow{10}{.6in}{$\p\mid2$ and $F_\p = \Q_2$} 
 & \multirow{3}{*}{1} & \multirow{3}{*}{$\nu = 0$} & \multirow{3}{*}{II${}_2$}
 	& $u\equiv 1_{(4)}$ & 0 & 0 & \multirow{1}{*}{--} & \multirow{1}{*}{--} \\
\cline{5-9}
&  &  & & $u\equiv 3_{(8)}$ & \multirow{2}{*}{0} & \multirow{2}{*}{1} 
	& \multirow{2}{*}{$2^{-2} \gamma_\p(u)^{-1}$} & \multirow{2}{*}{2}\\
\cline{5-5} 
&  &  & & $u\equiv 7_{(8)}$ &  &  &  &  \\
\cline{2-9}
 & \multirow{2}{*}{2} & \multirow{2}{*}{$\nu = 2$} &  \multirow{2}{*}{I{}$_2$ } 
 	& $u\equiv 1_{(4)}$ & 1 & 1 & \multirow{1}{*}{$2^{-3}$} 
		& $2^{-2} \gamma_\p(u)$\\
\cline{5-9}
&  &  & & $u\equiv 3_{(4)}$ & 1 & 0 
	& $2^{-2}$ & $2^{-1} \gamma_\p(u)$\\
\cline{2-9}
 & \multirow{1}{*}{3} & \multirow{1}{*}{$\nu = 3$} &  \multirow{1}{*}{I{}$_1 \oplus $I${}_1$} 
 	& -- & 1 & 1 & \multirow{1}{*}{$2^{-\frac{5}{2}}$} & $2^{-3} \, \gamma_\p(u)$\\
\cline{2-9}
 & \multirow{2}{*}{4} & \multirow{2}{*}{$\nu = 4$} &  \multirow{2}{*}{I{}$_1 \oplus $I${}_1$} 
 	& $u\equiv 1_{(4)}$ & 1 & 1 & \multirow{2}{*}{$2^{-2}$} & \multirow{2}{*}{$2^{-4} \, \gamma_\p(u)$} \\
\cline{5-7}
&  &  & & $u\equiv 3_{(4)}$ & 2 & 0 &  & \\
\cline{2-9}
 & \multirow{2}{*}{5} & \multirow{2}{*}{$\nu \geq 5$} &  \multirow{2}{*}{I{}$_1 \oplus $I${}_1$} 
 	& $\nu$ odd or $u\equiv 1_{(4)}$ & 2 & 2 & \multirow{2}{*}{$2^{\frac{\nu}{2} - 5}$} & \multirow{2}{*}{$2^{-\nu-1} \, \gamma_\p(u)$} \\
\cline{5-7}
&  &  & & $\nu$ even and $u\equiv 3_{(4)}$ & 4 & 0 &  & \\
\hline
\end{tabular}
}

\bigskip

\resizebox{0.98\hsize}{!}{
\centering
\hspace{-0.2in} 
\begin{tabular}{  | c |  c  | c |  c  |  c  | c  | c  |}
\multirow{2}{*}{prime $\p$} & \multirow{2}{*}{\text{\#}}   
    & Allowed & Jordan & \multirow{2}{*}{Cases }
    &  \multirow{2}{*}{$A^*_{\p; n=2}(S)$} &  \multirow{2}{*}{$B^*_{\p; n=2}(S)$} 
\\
 &  & $\nu = \ord_\p(S)$ & Blocks & & &
\\
 \hline
\multirow{3}{*}{$\p\nmid2$} 
& 1 & \multirow{1}{*}{$\nu = 0$} & I${}_2$ 
	& -- & 1 & 1 \\
\cline{2-7}
 & \multirow{2}{*}{2} & \multirow{2}{*}{$\nu \geq 1$} & \multirow{2}{*}{I{}$_1 \oplus $I${}_1$} & $\nu$ odd 
    &\multirow{2}{*}{$q^{-\nu} \cdot \gamma_\p(u)$} 
    &\multirow{1}{*}{$0$} 
    \\
 \cline{5-5} \cline{7-7}
 & &  & & $\nu$ even  & & $q^{-\nu} \cdot \gamma_\p(u)$
    \\
\hline
\hline
\multirow{10}{.6in}{$\p\mid2$ and $F_\p = \Q_2$} 
 & \multirow{3}{*}{1} & \multirow{3}{*}{$\nu = 0$} & \multirow{3}{*}{II${}_2$}
 	& $u\equiv 1_{(4)}$ &  \multirow{1}{*}{--} & \multirow{1}{*}{--} \\
\cline{5-7}
&  &  & & $u\equiv 3_{(8)}$ & \multirow{2}{*}{$1$}  & \multirow{2}{*}{$-1$} \\
\cline{5-5}
&  &  & & $u\equiv 7_{(8)}$ &  & \\
\cline{2-7}
 & \multirow{2}{*}{2} & \multirow{2}{*}{$\nu = 2$} &  \multirow{2}{*}{I{}$_2$ } 
 	& $u\equiv 1_{(4)}$ & \multirow{2}{*}{$2^{-2} \gamma_\p(u)$} & 0\\
\cline{5-5}  \cline{7-7}
&  &  & & $u\equiv 3_{(4)}$ &  & $2^{-2} \gamma_\p(u)$\\
\cline{2-7}
 & \multirow{1}{*}{3} & \multirow{1}{*}{$\nu = 3$} &  \multirow{1}{*}{I{}$_1 \oplus $I${}_1$} 
 	& -- &    $2^{-3} \,  \gamma_\p(u)$ & 0 \\
\cline{2-7}
 & \multirow{2}{*}{4} & \multirow{2}{*}{$\nu = 4$} &  \multirow{2}{*}{I{}$_1 \oplus $I${}_1$} 
 	& $u\equiv 1_{(4)}$ & \multirow{2}{*}{$2^{-4} \,  \gamma_\p(u)$}  & 0 \\
\cline{5-5} \cline{7-7}
&  &  & & $u\equiv 3_{(4)}$ &  & $2^{-4} \,  \gamma_\p(u)$ \\
\cline{2-7}
 & \multirow{2}{*}{5} & \multirow{2}{*}{$\nu \geq 5$} &  \multirow{2}{*}{I{}$_1 \oplus $I${}_1$} 
 	& $\nu$ odd or $u\equiv 1_{(4)}$ &  \multirow{2}{*}{$2^{-\nu} \,  \gamma_\p(u)$} 
	& 0 \\
\cline{5-5} \cline{7-7}
&  &  & & $\nu$ even and $u\equiv 3_{(4)}$ &  & $2^{-\nu} \,  \gamma_\p(u)$ \\
\hline
\end{tabular}
}

\noindent
(Note: Here our convention for the generic local density at $\p \mid 2$ requires an extra division by 2 for the normalized local densities before using them in the second table.)

\bigskip

When $\p\nmid2$ these tables give
\begin{align*}
\sum_{\nu \geq 0} A^*_{\p; 2}(u\, \pi_\p^\nu) X^\nu 
&=
1 +  \(1- \tfrac{\chi_u(\p)}{q}\) \sum_{\nu \geq 1} q^{-\nu} X^\nu 
=
1 +  \frac{(1- \frac{\chi_u(\p)}{q})\frac{X}{q}}{1 - \frac{X}{q}} 
=
 \frac{1- \frac{\chi_u(\p) X}{q^2}}{1 - \frac{X}{q}} \\
\end{align*}
and 
\begin{align*}
\sum_{\nu \geq 0} B^*_{\p; 2}(u\, \pi_\p^\nu) X^\nu 
&=
1 +  \(1- \tfrac{\chi_u(\p)}{q}\) \sum_{\substack{\nu \geq 2 \\ \nu \text{ even}}} q^{-\nu} X^\nu 
=
1 +  \frac{(1- \frac{\chi_u(\p)}{q})\frac{X^2}{q^2}}{1 - \frac{X^2}{q^2}} 
=
 \frac{1  - \frac{\chi_u(\p) X^2}{q^3}}{1 - \frac{X^2}{q^2}} 
\end{align*}
which give the desired formulas for $\p\nmid2$ after substituting $X := q^{-s}$.  

When $\p\mid2$ and $F_\p= \Q_2$ we have 
\begin{align*}
\sum_{\nu \geq 0} A^*_{\p^\nu, u} X^\nu 
=
1 +  \(1- \tfrac{\chi_u(\p)}{q}\) \sum_{\nu \geq 2} q^{-\nu} X^\nu 
=
1 +  \frac{(1- \frac{\chi_u(\p)}{q})\frac{X^2}{q^2}}{1 - \frac{X}{q}} 
=
 \frac{1- \frac{\chi_u(\p) X}{q^3}}{1 - \frac{X}{q}} 
\end{align*}
and when $\p\mid 2$ and $u \equiv 3$ (mod $4\Op$) we have 
\begin{align*}
\sum_{\nu \geq 0} B^*_{\p; 2}(u\, \pi_\p^\nu) X^\nu 
&=
-1 +  \(1- \tfrac{\chi_u(\p)}{q}\) \sum_{\substack{\nu \geq 2 \\ \nu \text{ even}}} q^{-\nu} X^\nu 
=
-1 +  \frac{(1- \frac{\chi_u(\p)}{q})\frac{X^2}{q^2}}{1 - \frac{X^2}{q^2}} 
=
 \frac{-1 + \frac{2 X^2}{q^2} - \frac{\chi_u(\p) X^2}{q^3}}{1 - \frac{X^2}{q^2}} 
\end{align*}
which give the desired formulas for $\p\nmid2$ after substituting $X := q^{-s}$.  When $\p\mid 2$ and $u\equiv1$ (mod $4\Op$) then $B_{\p; 2}(u\, \pi_\p^\nu) = 0$ for all $\nu$.
\end{proof}

\begin{rem} \label{Rem:B_is_zero_for_nonsquare_ideals}
Notice that from our table of local computations in the proof of Theorem \ref{Thm:Explicit_A_and_B_for_n=2} that $B^*_{n=2}(S) = 0$ unless $\I(S)$ is a square and $S \equiv 3$ (mod $4\OF$)  In this case we have $A^*_{n=2}(S) = \pm B^*_{n=2}(S)$, where $\pm = (-1)^\tau$ and $\tau$ is the number of primes $\p\mid 2$ where $\p\nmid \I(S)$. 
\end{rem}

We now compute the total mass of totally definite quadratic lattices of any given determinant over number fields $F$ where $p=2$ splits completely. These results give an independent (global) way of computing the local series in Theorem \ref{Thm:Explicit_A_and_B_for_n=2}.

\begin{lem} \label{Lem:Unevaluated_mass_for_binary_lattices}
Suppose that $F$ is a totally real number field.  Then for every $S\in\SqAfInt$ for which 
there exists a totally definite $\OF$-valued rank $2$ quadratic $\OF$-lattice $L$ with $\det_H(L) = S$ and signature vector $\vec{\sigma}_\infty$, 
we have
$$
\sum_{
L \in \mathbf{Cls}^*(S, \vec{\sigma}_\infty; n=2)
}
\frac{1}{|\Aut(L)|}
=
	\frac{
	\beta^{-1}_{n=2, \mathbf{f}}(\widetilde{S})
	\cdot 
	|\Delta_F|^\frac{1}{2}
	\cdot N_{F/\Q}(\I(S))^{\frac{3}{2}} }{2 \cdot (4\pi)^{[F:\Q]}}
\biggl[
	A^*_{n=2}(S) + \ve_\infty \cdot B^*_{n=2}(S) 
\biggr],
$$
where $\ve_\infty := (-1)^\al$ and  $\al$ is the number of archimedean places $v$ where the local signature $\sigma_v$ is negative definite.
\end{lem}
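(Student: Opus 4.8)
The plan is to read the class-number sum off the global mass formula of Corollary~\ref{Cor:Mass_formula_result_for_globally_rational}, substitute the local decomposition of $M^*$ supplied by Theorem~\ref{Theorem:M_N_Dirichlet}, and then evaluate the resulting archimedean constant explicitly for $n=2$. First I would observe that the hypothesis---the existence of a totally definite $\OF$-valued binary lattice $L$ with $\det_H(L)=S$---forces $S$ to be globally rational by Lemma~\ref{Lem:Globally_rational}. Hence Corollary~\ref{Cor:Mass_formula_result_for_globally_rational} applies with the empty Hasse set $\S=\emptyset$ and this totally definite signature vector, and solving its identity for $\sum_{L}1/|\Aut(L)|$ writes the class-number sum as $M^*_{\ve_\infty;2}(S)$ times $|\Delta_F|^{1/2}N_{F/\Q}(\I(S))^{3/2}$ divided by the constant $V(2)^{[F:\Q]}\,N_{F/\Q}(2\OF)^{3}$.

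Next I would insert the structural formula for $M^*_{\ve_\infty;2}(S)$. Specializing Theorem~\ref{Theorem:M_N_Dirichlet} to $\S=\emptyset$ (so that its prefactor $\mathcal{K}_{\c_\S;n}=1$ and $C_{\ve_\infty,\c_\S}=\ve_\infty$), and invoking Lemma~\ref{Lem:A_and_B_generically_one} to discard the trivial factors $A^*_{\p;2}(S_\p)=B^*_{\p;2}(S_\p)=1$ at every $\p\notin\T:=\{\p\mid2\}\cup\Supp(S)$, collapses the local product to
$$
M^*_{\ve_\infty;2}(S)=\tfrac12\,\beta_{2,\mathbf{f}}^{-1}(\widetilde{S})\bigl[A^*_{n=2}(S)+\ve_\infty\,B^*_{n=2}(S)\bigr],
$$
where $A^*_{n=2}(S)=\prod_\p A^*_{\p;2}(S_\p)$ and $B^*_{n=2}(S)=\prod_\p B^*_{\p;2}(S_\p)$. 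I would remark that this identity is insensitive to the arbitrary choice of generic local densities at the primes $\p\mid2$ made in Definition~\ref{Def:generic_density_and_product}, since each factor $\beta_{2,\p}(\widetilde{S_\p})$ sitting inside $A^*$ and $B^*$ cancels the corresponding factor of $\beta_{2,\mathbf{f}}^{-1}(\widetilde{S})$; thus the lemma holds for any fixed convention and, in particular, does not require $2$ to split completely. Substituting this expression into the inverted mass formula immediately supplies the bracketed factor, the generic density product $\beta_{2,\mathbf{f}}^{-1}(\widetilde{S})$, the factor $\tfrac12$, and the stated powers of $|\Delta_F|$ and $N_{F/\Q}(\I(S))$ (using $n(n-1)/4=\tfrac12$ and $(n+1)/2=\tfrac32$).

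It then remains to evaluate the denominator constant, which is the single routine computation. From the definition of $V(r)$ in Theorem~\ref{Thm:Mass_theorem} one finds $V(2)=\tfrac12\pi^{3/2}\,\Gamma(\tfrac12)^{-1}\Gamma(1)^{-1}=\tfrac{\pi}{2}$, while $N_{F/\Q}(2\OF)=2^{[F:\Q]}$. Hence $V(2)^{[F:\Q]}N_{F/\Q}(2\OF)^{3}=(\pi/2)^{[F:\Q]}\cdot2^{3[F:\Q]}=(4\pi)^{[F:\Q]}$, which is exactly the denominator appearing in the claim.

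The last and most delicate point---the genuine obstacle, since everything else is formal substitution---is to reconcile the two descriptions of $\ve_\infty$: Corollary~\ref{Cor:Mass_formula_result_for_globally_rational} uses $\ve_\infty=\prod_{v\mid\infty}c_v((\vec{\sigma}_\infty)_v)$, whereas the lemma asserts $\ve_\infty=(-1)^\al$ with $\al$ the number of negative definite real places. For a binary form these agree: at a positive definite real place the form is $\sim x^2+y^2$ with Hasse invariant $(1,1)_v=1$, and at a negative definite real place it is $\sim -x^2-y^2$ with $(-1,-1)_v=-1$; as $F$ is totally real there are no complex places to consider, so the product of local Hasse invariants is precisely $(-1)^\al$. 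This identification completes the proof.
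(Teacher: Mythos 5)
Your proposal is correct and follows essentially the same route as the paper: invert Corollary \ref{Cor:Mass_formula_result_for_globally_rational} for $n=2$ (computing $V(2)^{[F:\Q]}N_{F/\Q}(2\OF)^3=(4\pi)^{[F:\Q]}$), substitute the $\S=\emptyset$ case of Theorem \ref{Theorem:M_N_Dirichlet} to write $M^*_{\ve_\infty;2}(S)=\tfrac12\beta_{2,\mathbf{f}}^{-1}(\widetilde{S})[A^*_{n=2}(S)+\ve_\infty B^*_{n=2}(S)]$, and combine. You are in fact slightly more careful than the paper on two points it leaves implicit --- that the existence hypothesis forces $S$ to be globally rational, and that $\prod_{v\mid\infty}c_v((\vec{\sigma}_\infty)_v)=(-1)^{\al}$ for a totally definite binary signature vector --- both of which are correctly handled.
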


\begin{proof}
Since $V(2) = \frac{\pi}{2}$ in Corollary \ref{Cor:Mass_formula_result_for_globally_rational}, we have that
$$
M^*_{\ve_\infty; 2}(S)  
= 
\frac{(4\pi)^{[F:\Q]}}{|\Delta_F|^\frac{1}{2} N_{F/\Q}(\I(S))^{\frac{3}{2}}}
\cdot
\sum_{
Q \in \mathbf{Cls}^*(S, \sigma; 2)
}
\frac{1}{|\Aut(L)|}
$$
with $\ve_\infty = 1$.
When $\S = \emptyset$ in Theorem \ref{Cor:Mass_formula_result_for_globally_rational}, looking at the $S$-coefficient gives 
$$
M^*_{\ve_\infty; 2}(S)  
=
\tfrac{1}{2} \beta_{n=2, \mathbf{f}}^{-1}(\widetilde{S})
	\cdot \[A^*_{n=2}(S) +  \ve_\infty \cdot B^*_{n=2}(S)\]
$$
which proves the theorem.
\end{proof}

\begin{rem}
This issue of existence of global genera of lattices in Lemma \ref{Lem:Unevaluated_mass_for_binary_lattices} is equivalent to the local existence (at all $\p$) together with the condition that $S$ is globally rational.  The local existence question is discussed in Remark \ref{Rem:Local_genus_existence_when_2_splits_completely}, and gives the exact existence criterion when $p=2$ splits completely in $F$.
\end{rem}

\begin{lem} \label{Lem:Generic_local_product_when_n_is_two}
Suppose that $p=2$ splits completely in $F$.  Then with the conventions in Definition \ref{Def:generic_local_density_convention_at_n=2}, the generic density product can be evaluated as
$$
\beta^{-1}_{n=2, \mathbf{f}}(S)
=
2^{[F:\Q]} \cdot 
	\prod_\p \gamma_\p(\widetilde{S})^{-1}.
$$
\end{lem}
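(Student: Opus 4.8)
The plan is to evaluate the generic density product $\beta^{-1}_{n=2, \mathbf{f}}(S) = \prod_\p \beta^{-1}_{n=2, \p}(\widetilde{S_\p})$ (Definition \ref{Def:generic_density_and_product}) one Euler factor at a time, splitting the product over the primes $\p \nmid 2$ and $\p \mid 2$, and then counting the primes lying above $2$. Since $S$ is normalized for $n = 2$ we have $\I(\widetilde{S}) = \mathfrak{h}_2 = \OF$, so each local component $\widetilde{S_\p}$ is a unit squareclass, and $\gamma_\p(\widetilde{S}) = \gamma_\p(\widetilde{S_\p})$ by the compatibility $(\widetilde{S})_\p = \widetilde{S_\p}$ of Definition \ref{Defn:Associated_normalized_squareclass}.

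At a prime $\p \nmid 2$ the unique (unimodular) normalized genus (Theorem \ref{Thm:Uniqueness_for_normalized_QF}) is the binary form with $\det_H = \widetilde{S_\p}$, and by Hensel's Lemma its local density is $\beta_{n=2, \p}(\widetilde{S_\p}) = |\SO_2(k_\p)|/q$. The torus $\SO_2(k_\p)$ is split (of order $q - 1$) exactly when the binary space is isotropic, i.e.\ when $-\det_G$ is a square, and is the norm-one torus of a quadratic extension (of order $q + 1$) otherwise; in both cases $|\SO_2(k_\p)| = q - \leg{-\det_G}{\p}$. The key bookkeeping step is that $\det_H = 2^2 \det_G = 4 \det_G$ with $4$ a unit square at $\p \nmid 2$, so $\leg{-\det_G}{\p} = \leg{-\det_H}{\p} = \chi_{\widetilde{S_\p}}(\p)$; hence $\beta_{n=2, \p}(\widetilde{S_\p}) = 1 - \chi_{\widetilde{S_\p}}(\p)/q = \gamma_\p(\widetilde{S_\p})$ and $\beta^{-1}_{n=2, \p}(\widetilde{S_\p}) = \gamma_\p(\widetilde{S_\p})^{-1}$. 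This character identification is the only real content, and the step I expect to be the main (if minor) obstacle, since one must track the sign $\leg{-1}{\p}$ correctly against the split/non-split dichotomy.

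At a prime $\p \mid 2$ complete splitting of $2$ forces $F_\p = \Q_2$, so Definition \ref{Def:generic_local_density_convention_at_n=2} applies: when $\widetilde{S_\p} \equiv 1 \pmod{4}$ the convention sets $\beta^{-1}_{n=2, \p}(\widetilde{S_\p}) = 2\,\gamma_\p(\widetilde{S_\p})^{-1}$, while when $\widetilde{S_\p} \equiv 3 \pmod{4}$ the genuine normalized genus is the block $(\bar{2})$, for which the table in the proof of Theorem \ref{Thm:Explicit_A_and_B_for_n=2} records $\gamma_\p(\widetilde{S_\p}) \cdot \beta^{-1}_{n=2,\p}(\widetilde{S_\p}) = 2$; either way $\beta^{-1}_{n=2, \p}(\widetilde{S_\p}) = 2\,\gamma_\p(\widetilde{S_\p})^{-1}$. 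Finally I would assemble the factors: since $2$ splits completely, $2\OF = \prod_{\p \mid 2} \p$ with exactly $[F:\Q]$ distinct primes $\p \mid 2$, each contributing one extra factor of $2$ relative to $\gamma_\p^{-1}$. Multiplying over the two ranges of primes therefore gives $\beta^{-1}_{n=2, \mathbf{f}}(S) = \bigl(\prod_{\p \mid 2} 2\bigr)\prod_\p \gamma_\p(\widetilde{S})^{-1} = 2^{[F:\Q]} \prod_\p \gamma_\p(\widetilde{S})^{-1}$, as claimed. The remaining work is routine, the stated identity holding factor-by-factor so that the two sides converge together.
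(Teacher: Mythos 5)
Your proposal is correct and follows essentially the same route as the paper: evaluate the product factor by factor, getting $\beta^{-1}_{n=2,\p}(\widetilde{S_\p})=\gamma_\p(\widetilde{S})^{-1}$ at $\p\nmid 2$ from the unique unimodular genus, and $2\gamma_\p(\widetilde{S})^{-1}$ at each of the $[F:\Q]$ primes above $2$ (by the convention of Definition \ref{Def:generic_local_density_convention_at_n=2} when $\widetilde{S_\p}\equiv 1_{(4)}$, and from the genuine $(\bar 2)$ genus when $\widetilde{S_\p}\equiv 3_{(4)}$). The only cosmetic difference is that you re-derive the $\p\nmid 2$ density from $|\SO_2(k_\p)|=q-\leg{-\det_G}{\p}$ via Hensel's Lemma, whereas the paper cites this computation (done in its earlier convergence lemma) and, at $\p\mid 2$, computes the value $2\gamma_\p^{-1}$ via the doubled $\p$-mass rather than reading it from the table.
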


\begin{proof}
When $\p\nmid 2$ we have a unique local genus $G_\p$ with $\det_H(G_\p) = \widetilde{S}_\p$, and $\beta^{-1}_{{G_\p}, \p}(G_\p) = \gamma_\p(S)^{-1} := \frac{1}{1-\frac{\chi_\mathfrak{t}(\p)}{q}}$.  
Now suppose that $\p\mid2$, $F_\p = \Q_2$.  
When $\widetilde{S}_\p = -1 \in \SqCl((\Op/4\Op)^\times)$ there is also a unique genus $G_\p$ with $\det_H(G_\p) = \widetilde{S}_\p$, but here the doubled $\p$-mass $2m_\p = \frac{1}{4} \gamma_\p(S)^{-1}$, giving $\beta^{-1}_{{G_\p}, \p}(G_\p) = 2 \gamma_\p(S)^{-1}$. When $\widetilde{S}_\p = 1 \in \SqCl((\Op/4\Op)^\times)$ then there is no local genus $G_\p$ with $\det_H(G_\p) = \widetilde{S}_\p$, but following Definition \ref{Def:generic_local_density_convention_at_n=2} we define the normalized local density in this case to again be $2 \gamma_\p(S)^{-1}$.
Since $2$ splits completely in $F$, we have that $\beta^{-1}_{n=2, \mathbf{f}}(\widetilde{S}) = 2^{[F:\Q]} \prod_\p \gamma^{-1}_\p(S)$, which proves the lemma.
\end{proof}

\begin{thm}  Suppose that $F$ is totally real,  $p=2$ splits completely in $F$, and $\vec{\sigma}_\infty^+$ is the totally definite signature vector of rank 2 (i.e. $\sigma_v^+ = (2,0)$ for all $v\mid\infty$).  Then 
for every $S\in\SqAfInt$ where $\mathbf{Cls}^*(S, \vec{\sigma}_\infty^+; n = 2) \neq \emptyset$, we have 
$$
\sum_{
L \in \mathbf{Cls}^*(S, \vec{\sigma}_\infty^+; n = 2)
}
\frac{1}{|\Aut(L)|}
= 
\frac{\kappa(S) \cdot |\Delta_F|^\frac{1}{2} \cdot N_{F/\Q}(\I(S))^{\frac{1}{2}}}{2 \cdot (2\pi)^{[F:\Q]}}
\cdot
\prod_{\p\nmid \I(S)} \gamma_\p(S)^{-1}
$$
where 
$$
\kappa(S) :=
\begin{cases}
2 & \qquad \text{if $\I(S) = \square$, $\widetilde{S} \equiv 3_{(4)}$, and $\tau$ is even} \\
0 & \qquad \text{if $\I(S) = \square$, $\widetilde{S} \equiv 3_{(4)}$, and $\tau$ is odd} \\
1 & \qquad \text{otherwise,} \\
\end{cases}
$$
and $\tau := $ the number of primes $\p\mid 2$ with $\p\nmid \I(S)$.
\end{thm}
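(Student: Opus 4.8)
The plan is to feed the generic-product evaluation of Lemma \ref{Lem:Generic_local_product_when_n_is_two} into the global mass identity of Lemma \ref{Lem:Unevaluated_mass_for_binary_lattices}, and then to absorb the local factors computed in Theorem \ref{Thm:Explicit_A_and_B_for_n=2} into the constant $\kappa(S)$ and the restricted product $\prod_{\p\nmid\I(S)}\gamma_\p(S)^{-1}$. First I would note that for the totally definite signature vector $\vec{\sigma}_\infty^+$ every local signature is positive definite, so the exponent $\al$ of Lemma \ref{Lem:Unevaluated_mass_for_binary_lattices} is zero and hence $\ve_\infty = 1$. Substituting $\beta^{-1}_{n=2,\mathbf{f}}(\widetilde{S}) = 2^{[F:\Q]}\prod_\p \gamma_\p(\widetilde{S})^{-1}$ and simplifying $2^{[F:\Q]}/(4\pi)^{[F:\Q]} = (2\pi)^{-[F:\Q]}$ reduces the claim to the purely local identity
$$
N_{F/\Q}(\I(S)) \, \prod_\p \gamma_\p(\widetilde{S})^{-1} \, \bigl[A^*_{n=2}(S) + B^*_{n=2}(S)\bigr] = \kappa(S) \, \prod_{\p\nmid\I(S)} \gamma_\p(S)^{-1},
$$
which peels one factor off $N_{F/\Q}(\I(S))^{3/2}$ to leave the desired $N_{F/\Q}(\I(S))^{1/2}$.

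Next I would dispose of the bracket using Remark \ref{Rem:B_is_zero_for_nonsquare_ideals}. Either $B^*_{n=2}(S) = 0$, in which case the bracket equals $A^*_{n=2}(S)$ and I must produce $\kappa(S) = 1$; or $\I(S)$ is a square with $\widetilde{S}\equiv 3 \pmod{4\OF}$ and $A^*_{n=2}(S) = (-1)^\tau B^*_{n=2}(S)$, so that the bracket equals $A^*_{n=2}(S)\,(1+(-1)^\tau)$, which is $2A^*_{n=2}(S)$ or $0$ according to the parity of $\tau$. In every case the bracket is $\kappa(S)\,A^*_{n=2}(S)$ with $\kappa(S)$ exactly as defined, so it remains to verify
$$
N_{F/\Q}(\I(S)) \, \prod_\p \gamma_\p(\widetilde{S})^{-1} \, A^*_{n=2}(S) = \prod_{\p\nmid\I(S)} \gamma_\p(S)^{-1}.
$$
Distributing $N_{F/\Q}(\I(S)) = \prod_\p q^{\ord_\p(S)}$ over the primes, it suffices to check prime-by-prime that $q^{\ord_\p(S)}\,\gamma_\p(\widetilde{S})^{-1}\,A^*_{\p;2}(S_\p)$ equals $\gamma_\p(S)^{-1}$ when $\p\nmid\I(S)$ and equals $1$ when $\p\mid\I(S)$.

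The final and most delicate step is this local bookkeeping, read off the tables in the proof of Theorem \ref{Thm:Explicit_A_and_B_for_n=2}. When $\ord_\p(S) = 0$ one has $A^*_{\p;2}(S_\p) = 1$ and $\widetilde{S}_\p = S_\p$ is a unit, so the factor collapses to $\gamma_\p(S)^{-1}$; for $\p\mid 2$ only the squareclass $\widetilde{S}_\p \equiv -1 \pmod{4}$ survives here, the others being excluded by the existence criterion of Remark \ref{Rem:Local_genus_existence_when_2_splits_completely}. When $\ord_\p(S) = \nu \geq 1$ the tables give the uniform value $A^*_{\p;2}(S_\p) = q^{-\nu}\gamma_\p(u)$ (with $q = 2$ at the dyadic primes, where the admissible valuations are $\nu \in \{2,3,4\}$ and $\nu \geq 5$, the value $\nu = 1$ being excluded by the same existence criterion, which is precisely why we assume $\mathbf{Cls}^*(S,\vec{\sigma}_\infty^+;n=2)\neq\emptyset$), while $\gamma_\p(\widetilde{S}) = \gamma_\p(u)$ sees only the unit part $u$; hence $q^{\nu}\gamma_\p(u)^{-1}\cdot q^{-\nu}\gamma_\p(u) = 1$ and these primes drop out of the product. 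I expect the main obstacle to be organizing the dyadic cases so that the cancellation $q^{\nu}\cdot(\text{normalized density})\cdot A^*_{\p;2} = 1$ holds uniformly across $\nu = 2,3,4$ and $\nu\geq 5$; once the powers of $2$ recorded in the $\p$-mass column are matched against $q^{\nu} = 2^{\nu}$, the whole product collapses to the asserted restricted Euler product and the theorem follows.
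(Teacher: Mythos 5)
Your proposal is correct and follows essentially the same route as the paper: reduce to the $A^*$-term via Remark \ref{Rem:B_is_zero_for_nonsquare_ideals} (which produces $\kappa(S)$), read off $A^*_{\p;2}$ from the tables of Theorem \ref{Thm:Explicit_A_and_B_for_n=2} to get $A^*_{n=2}(S) = N_{F/\Q}(\I(S))^{-1}\prod_{\p\mid\I(S)}\gamma_\p(\widetilde{S})$, and combine with Lemmas \ref{Lem:Generic_local_product_when_n_is_two} and \ref{Lem:Unevaluated_mass_for_binary_lattices}. Your prime-by-prime verification is just a reorganization of the paper's product computation, and your handling of the dyadic edge cases (nonexistent genera at $\nu=0$, $u\equiv 1_{(4)}$ and at $\nu=1$) is consistent with the paper's existence criterion.
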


\begin{proof}
By Remark \ref{Rem:B_is_zero_for_nonsquare_ideals} when $\I(S) \neq \square$ or $\widetilde{S} \equiv 1_{(4)}$ we only need to consider $A^*_2(S)$  since there $B^*_2(S) = 0$.  
From the table in the proof of Theorem \ref{Thm:Explicit_A_and_B_for_n=2}, we see that $A^*_\p(S)$ is  $q^{-\nu_\p}$ where $\nu_\p := \ord_\p(\I(S))$, with a possible factor of $\gamma_\p(u)$ which appears iff $\p\mid \I(S)$.  
Combining these we have 
$$
A^*_{n=2}(S) = 
	\frac{1}{N_{F/\Q}(\I(S))} \cdot 
	\prod_{\p\mid \I(S)} \gamma_\p(\widetilde{S}).
$$
Thus by Lemma \ref{Lem:Generic_local_product_when_n_is_two} the product 
$$
\beta^{-1}_{n=2, \mathbf{f}}(\widetilde{S}) \cdot A^*_{n=2}(S) = 
	\frac{2^{[F:\Q]}}{N_{F/\Q}(\I(S))} \cdot 
	\prod_{\p\nmid \I(S)} \gamma_\p(\widetilde{S})^{-1},
$$
and the result follows from Lemma \ref{Lem:Unevaluated_mass_for_binary_lattices}.

When $\I(S) = \square$ and $\widetilde{S} \equiv 1_{(4)}$ then from Remark \ref{Rem:B_is_zero_for_nonsquare_ideals} we have $B^*_{n=2}(S) = (-1)^\tau A^*_{n=2}(S)$, proving the cases where $\kappa(S) = 0$ and $2$.
\end{proof}

\section{The analytic class number formula}

In this section we explain how to interpret our previous results about binary quadratic lattices in terms of class numbers of relative quadratic orders, by using Kneser's generalized Dedekind correspondence between quadratic lattices and ideal classes of quadratic extensions.
One interesting corollary of this formula is to recover the Dirichlet class number formula for CM extensions of totally real fields $F$ where $p=2$ splits completely (in $F$).    
This elucidates some comment of Siegel \cite[p11, pp124-5]{Siegel:1963vn} where he states that his general mass formula recovers Dirichlet's class number formula when applied to binary quadratic forms.

\begin{defn}
Suppose that $R$ is a quadratic $\OF$-algebra in the sense of \cite[\textsection2, p407]{Kneser:1982kx}.  Then we define the its {\bf (non-archimedean) discriminant squareclass} $\Disc_{\OF}(R) \in \SqAfInt$ by requiring that the local squareclasses $\Disc_{\OF}(R)_\p$ are the discriminant squareclasses $(b^2 - 4c)(\Op^\times)^2$ of the local free quadratic algebras $R_\p \cong \Op[x]/(x^2 + bx + c)$. 
\end{defn}

\begin{defn}
We say that two signature vectors $\vec{\sigma}_\infty$ and $\vec{\sigma}'_\infty$ for $F$ of rank $n$ are {\bf locally similar} if for every real place $v$ of $F$ we have that $\sigma_v = (\sigma_{v, +}, \sigma_{v, -})$ is equal to either $(\sigma'_{v, +}, \sigma'_{v, -})$ or $(\sigma'_{v, -}, \sigma'_{v, +})$, which corresponds to the relation that the associated local quadratic spaces are similar for each $v\mid\infty$.
\end{defn}


To connect classes of binary quadratic forms with ideal classes in a relative quadratic extension, we translate some results of Kneser on composition laws for binary quadratic forms into our language.

\begin{lem}[Translating Kneser's Quadratic Lattices] \label{Lem:Kneser_G(C)_translation}
Suppose that $C$ is a quadratic $\OF$-algebra, and let $G(C)$ denote the group of projective rank two primitive quadratic $\OF$-lattices $(L, Q)$ which are rank one $C$-modules and satisfy the norm-compatibility condition $Q(c\cdot \x) = \Nm(c) \cdot Q(\x)$ for all $\x \in L$ and all $c \in C$, as described in \cite[\textsection6, p441]{Kneser:1982kx}. Then 
$$
G(C) = \bigsqcup_{\substack{\vec{\sigma}'_\infty \text{ is locally } \\ \text{ similar to } \vec{\sigma}_\infty(C)}}
\mathbf{Cls}^*(\textstyle{\det_H(C)}, \vec{\sigma}'_\infty; n=2)
$$
where $\det_H(C) \in \SqAfInt$ and $\vec{\sigma}_\infty(C)$ are respectively the non-archimedean Hessian determinant squareclass and signature vector of the binary quadratic $\OF$-lattice $C$ equipped with its (quadratic) norm form $\Nm_{C/\OF}$, and $\vec{\sigma}'_\infty$ runs over all signature vectors of rank $2$ which are definite at exactly the real places $v$ where $\vec{\sigma}_\infty(C)$ is definite.
\end{lem}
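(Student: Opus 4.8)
The plan is to realize the asserted identity through the forgetful map
$$
\Phi : G(C) \longrightarrow \{\text{primitive binary quadratic } \OF\text{-lattices}\}, \qquad (L,Q) \mapsto (L,Q),
$$
which discards the rank-one $C$-module structure but retains the quadratic lattice. The content then splits in two: first, that the image of $\Phi$ consists of exactly those classes whose non-archimedean Hessian determinant squareclass equals $\det_H(C)$ and whose signature vector is locally similar to $\vec{\sigma}_\infty(C)$ and definite at exactly the places where $\vec{\sigma}_\infty(C)$ is definite; and second, that $\Phi$ identifies $G(C)$ bijectively with the disjoint union of the corresponding class sets. I would reduce both assertions to a purely local analysis at each place $v$, using the local-global principle for lattices (O'Meara \textsection81:14, already invoked in Lemma~\ref{Lem:T_is_M}) to pass between an $\OF$-lattice and its localizations, together with Kneser's even Clifford construction (which canonically recovers $C$ from $(L,Q)$) to supply the module structure in the reverse direction.

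First I would carry out the local computation at a prime $\p$. Since $C_\p$ is semilocal, any rank-one projective $C_\p$-module is free, so $L_\p = C_\p\cdot e$ for some generator $e$; the norm-compatibility $Q(c\vec x)=\Nm(c)Q(\vec x)$ with $\vec x=e$ then forces $Q(ce)=\Nm(c)\,\lambda_\p$ where $\lambda_\p:=Q(e)$, i.e. $(L_\p,Q_\p)\cong(C_\p,\lambda_\p\Nm_{C/\OF})$. Because the norm form represents $\Nm(1)=1$ it is itself primitive, so primitivity of $(L,Q)$ is equivalent to $\lambda_\p\in\Op^\times$. For a binary form, scaling $Q$ by $\lambda_\p$ scales the $2\times2$ Hessian matrix by $\lambda_\p$ and hence its determinant by $\lambda_\p^2$, a square, so that
$$
\det_H(L)_\p = \lambda_\p^2\cdot\det_H(\Nm_{C/\OF})_\p = \det_H(C)_\p \in \SqFpInt,
$$
giving $\det_H(L)=\det_H(C)$ globally. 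At a real place $v$ the same identification $(L_v,Q_v)\cong(C_v,\lambda_v\Nm_v)$ applies: when $C_v\cong\C$ the norm form is definite and $\lambda_v\gtrless 0$ yields signature $(2,0)$ or $(0,2)$, while when $C_v\cong\R\times\R$ the norm form is hyperbolic and stays $(1,1)$ for every nonzero $\lambda_v$. Thus $(L,Q)$ is definite exactly where $\vec{\sigma}_\infty(C)$ is, its signature vector is locally similar to $\vec{\sigma}_\infty(C)$, and the sign of $\lambda_v$ at each definite place accounts for all the allowed $\vec{\sigma}'_\infty$.

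It remains to upgrade these local statements to the global bijection and to assemble the disjoint union. Conversely, given a primitive binary lattice $(L,Q)$ with $\det_H(L,Q)=\det_H(C)$, its even Clifford algebra $C_0(L,Q)$ is a quadratic $\OF$-algebra of the same discriminant squareclass, hence isomorphic to $C$, and $L$ is canonically a rank-one module over it with norm-compatibility built into the Clifford multiplication; so $(L,Q)$ lies in the image of $\Phi$. Since signature is an isometry invariant, the class sets attached to distinct $\vec{\sigma}'_\infty$ are disjoint, and as $\vec{\sigma}'_\infty$ ranges over the signature vectors locally similar to and definite-compatible with $\vec{\sigma}_\infty(C)$ they exhaust the image, yielding the displayed disjoint union. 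The hard part will be the careful matching of equivalences under $\Phi$: an isometry of quadratic lattices induces on even Clifford algebras either the identity or the Galois conjugation of $C$, so one must track orientations and the conjugation automorphism to confirm that Kneser's notion of isomorphism in $G(C)$ corresponds precisely to the class equivalence used in $\mathbf{Cls}^*(\cdot)$ and that no spurious factor of two appears. This bookkeeping is exactly what Kneser's composition-theoretic results in \cite[\textsection6]{Kneser:1982kx} provide, and invoking them is what makes $\Phi$ a bijection rather than merely a surjection.
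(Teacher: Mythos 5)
Your proposal is correct and follows essentially the same route as the paper's proof: the local identification $(L_v,Q_v)\cong u_v\cdot(C_v,\Nm_{C_v/\Ov})$ with $u_v=Q(\x_v)$ for a local generator, the observation that an even-rank determinant squareclass is unchanged by local unit scaling (your explicit note that primitivity forces $\lambda_\p\in\Op^\times$ is a point the paper leaves implicit), the signature analysis at real places, and the even Clifford algebra to supply the converse inclusion. The equivalence-matching concern you flag at the end is likewise deferred to Kneser in the paper, so there is no substantive divergence.
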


\begin{proof}
If $(L,Q) \in G(C)$ then by the norm-compatibility condition at each place $v$ we have the similarity condition $(L_v, Q_v) \cong_{\Ov} u_v \cdot (C_v, \Nm_{C_v / \Ov})$ where $u_v := Q(\x_v)$ for any $\x_v \in L_v$ that generates $L_v$ as a (free) rank one $C_v$-module.  At non-archimedean places $\p$, since the local determinant squareclass of an even rank quadratic lattice is unaffected by local unit scaling, this shows that $\det_H(L) = \det_H(C) \in \SqAfInt$.  At real places $v$ we have that local signatures $\sigma_v(C_v, \Nm_v) = (2,0)$ or $(1,1)$ when $(C_v, \Nm_v)$ is respectively definite or indefinite since $\Nm(1) = 1 > 0$.  Similarly, the local similarity of $L$ and $C$ at real places $v$ shows that $\sigma_v(L, Q)$ is definite $\iff \sigma_v(C, \Nm)$ is definite, so we have the inclusion $\subseteq$.

Conversely, suppose that $(L, Q) \in \mathbf{Cls}^*(S, \vec{\sigma}_\infty; n=2)$ for some choice of $S$ and $\vec{\sigma}_\infty$.  Then from \cite[p407, top]{Kneser:1982kx} we know that $L$ is a rank 2 module over its even Cifford algebra $C := C(L)$ satisfying the norm compatibility condition above, and $C$ is a quadratic $\OF$-algebra.  However since a quadratic $\OF$-algebra is determined locally up to isomorphism by its non-archimedean discriminant squareclass in $\SqAfInt$ (hence determined globally), and locally at all primes $\p$ the discriminant squareclass of $C$ must agree with $S$ (from the previous argument), we see that $C$ is independent of our choice of $L$.  (Also the previous argument now shows that $\vec{\sigma}_\infty$ is locally similar to $\vec{\sigma}_\infty(C)$.)  This shows the opposite inclusion $\supseteq$, proving the claim.
\end{proof}


\begin{lem}[Generalized Dedekind Correspondence]  \label{Lem:Dedekind_correspondence}
Suppose that $K/F$ is a CM extension of number fields.  Then, with $G(C)$ as in Lemma \ref{Lem:Kneser_G(C)_translation} and $C = \OK$, we have 
$$
|G(\OK)| = \frac{h(\OK)}{h(\OF)} \cdot \frac{2^{[F:\Q]}}{Q_{K/F}},
$$
where $Q_{K/F} := [\OK^\times: \OF^\times \cdot (\OK^\times \cap \mu_\infty)]$ and $\mu_\infty$ denotes the group of roots of unity in $\bar{\Q}$.
\end{lem}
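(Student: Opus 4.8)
The plan is to realize $G(\OK)$ as a group extension built out of the ideal class group of $\OK$, and then to reduce the count to a standard genus-theoretic index identity for the CM extension $K/F$. First I would exploit the even Clifford algebra description already used in the proof of Lemma \ref{Lem:Kneser_G(C)_translation}: an element of $G(\OK)$ is a pair $(L,Q)$ in which $L$ is a rank one $\OK$-module, hence a fractional $\OK$-ideal $\mathfrak{b}$ up to $\OK$-isomorphism, and $Q$ is a norm-compatible primitive $\OF$-valued form, necessarily of the shape $Q = \mu\cdot\Nm_{K/F}|_{\mathfrak{b}}$ for some $\mu\in F^\times$. Sending $(L,Q)\mapsto[\mathfrak{b}]\in\Pic(\OK)$ then defines a homomorphism $\Phi:G(\OK)\to\Pic(\OK)$, since Kneser's composition law on $G(\OK)$ corresponds to the tensor product of rank one $\OK$-modules.

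Next I would compute the image and kernel of $\Phi$. Writing $\mathfrak{N}_{K/F}(\mathfrak{b})$ for the relative norm ideal (which is generated by the values of $\Nm_{K/F}$ on $\mathfrak{b}$), the form $Q=\mu\Nm$ is primitive and $\OF$-valued exactly when $\mu\cdot\mathfrak{N}_{K/F}(\mathfrak{b}) = \OF$, so the class $[\mathfrak{b}]$ is realized iff $\mathfrak{N}_{K/F}(\mathfrak{b})$ is principal, i.e. iff $[\mathfrak{b}]\in\ker(N:\Pic\OK\to\Pic\OF)$. For the kernel, when $\mathfrak{b}=\OK$ every isometry between $(\OK,\mu\Nm)$ and $(\OK,\mu'\Nm)$ is $\OK$-semilinear (again by the intrinsic Clifford structure), hence multiplication by a unit $c\in\OK^\times$ possibly followed by conjugation, which forces $\mu'/\mu = \Nm_{K/F}(c)$; since $K$ is totally imaginary the form $\Nm$ is totally positive definite, so every sign pattern of $\mu$ occurs among the locally similar totally definite signature vectors comprising $G(\OK)$, and $\mu$ ranges over all of $\OF^\times$. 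This yields the short exact sequence
$$1\to \OF^\times/\Nm_{K/F}(\OK^\times)\to G(\OK)\xrightarrow{\Phi}\ker\!\big(N:\Pic\OK\to\Pic\OF\big)\to 1,$$
and therefore $|G(\OK)| = [\OF^\times:\Nm_{K/F}\OK^\times]\cdot[\Pic\OF:N\Pic\OK]\cdot\dfrac{h(\OK)}{h(\OF)}$.

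Comparing this with the asserted formula reduces the whole statement to the index identity
$$Q_{K/F}\cdot[\OF^\times:\Nm_{K/F}\OK^\times]\cdot[\Pic\OF:N_{K/F}\Pic\OK] = 2^{[F:\Q]}.$$
I would prove this by genus theory for the quadratic extension $K/F$: Chevalley's ambiguous class number formula expresses the number of ambiguous ideal classes in terms of $h(\OF)$, the ramification indices at all places, and the index of units that are global norms, while the CM structure controls the unit side through the two homomorphisms $u\mapsto u\bar{u}$ and $u\mapsto u/\bar{u}$. The relevant inputs are that $\Nm_{K/F}(W_K)=\{1\}$ and $\Nm_{K/F}(\OF^\times)=(\OF^\times)^2$ with $[\OF^\times:(\OF^\times)^2]=2^{[F:\Q]}$, together with the Hasse index relation $[\OK^\times:W_K\OF^\times]=Q_{K/F}$, where $W_K:=\OK^\times\cap\mu_\infty$. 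The finite ramified places then cancel against the norm-unit index, leaving precisely the archimedean contribution $2^{[F:\Q]}$ coming from the $[F:\Q]$ real places of $F$ that all become complex in $K$.

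The main obstacle is this last identity: keeping the bookkeeping of the three indices, namely the unit-norm index, the Hasse unit index $Q_{K/F}$, and the cokernel of the ideal-norm map, organized carefully enough that the finite ramification contributions cancel and only the archimedean factor $2^{[F:\Q]}$ survives. Everything preceding it is essentially formal once the even Clifford algebra correspondence of Lemma \ref{Lem:Kneser_G(C)_translation} and Kneser's group structure on $G(\OK)$ are taken as given.
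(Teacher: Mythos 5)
Your reduction of the count to the exact sequence
$$
1\to \OF^\times/\Nm_{K/F}(\OK^\times)\to G(\OK)\to\ker\bigl(N:\Pic(\OK)\to\Pic(\OF)\bigr)\to 1
$$
is exactly the paper's route: this is Kneser's exact sequence $\OK^\times\to\OF^\times\to G(\OK)\to\Pic(\OK)\to\Pic(\OF)$, which the paper simply cites, and your unit-side inputs ($\Nm_{K/F}(W_K)=\{1\}$, $\Nm_{K/F}(\OF^\times)=(\OF^\times)^2$, Dirichlet's unit theorem, the Hasse index) are the same ones the paper uses to get $[\OF^\times:\Nm_{K/F}(\OK^\times)]=2^{[F:\Q]}/Q_{K/F}$. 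The one ingredient you should make explicit there is that $\ker(\Nm_{K/F}|_{\OK^\times})=W_K$, i.e.\ Kronecker's theorem that a unit of norm $1$ in a CM field is a root of unity; that is what identifies $\Nm(\OK^\times)/(\OF^\times)^2$ with $\OK^\times/(W_K\cdot\OF^\times)$.

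The genuine gap is in your last step. Since the unit computation already forces $Q_{K/F}\cdot[\OF^\times:\Nm_{K/F}(\OK^\times)]=[\OF^\times:(\OF^\times)^2]=2^{[F:\Q]}$, your index identity is equivalent to the single statement $[\Pic(\OF):N_{K/F}\Pic(\OK)]=1$, i.e.\ surjectivity of the ideal-norm map on class groups. Chevalley's ambiguous class number formula does not deliver this: it computes $|\Pic(\OK)^{\Gal(K/F)}|$, not the cokernel of the norm, and its unit index is $[\OF^\times:\OF^\times\cap N_{K/F}(K^\times)]$ (units that are norms of field elements), not $[\OF^\times:\Nm_{K/F}(\OK^\times)]$. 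Moreover the asserted cancellation of the finite ramified places against the norm-unit index is false in general: already for $K=\Q(\sqrt{-p})$ with $p\equiv 3\ (\mathrm{mod}\ 4)$ the primes $2$ and $p$ ramify, the norm-unit index is only $2$, and the ambiguous class number is $2$, while the cokernel you need is trivial. The correct short argument for surjectivity --- which is what the paper imports from Washington, Thm.\ 10.1 --- is class-field-theoretic: the cokernel of $N_{K/F}:\Pic(\OK)\to\Pic(\OF)$ is $\Gal((H_F\cap K)/F)$ for the Hilbert class field $H_F$, and $H_F\cap K=F$ because $K/F$ is ramified at every archimedean place while $H_F/F$ is unramified everywhere. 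With that substitution your argument closes up and coincides with the paper's proof.
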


\begin{proof}  From Kneser \cite[p412, top]{Kneser:1982kx} we have the exact sequence
$$
\xymatrix{
\OK^\times \ar[r]^{\Nm}
	& \OF^\times \ar[r] 
	& G(\OK) \ar[r] 
	& \Pic(\OK) \ar[r]^{\Nm}
	& \Pic(\OF) \ar[r]
	& 1, 
}
$$
where the last entry follows from the surjectivity result \cite[Thrm 10.1, p184]{Washington:1982uq}, giving 
$$
|G(\OK)| = \frac{h(\OK)}{h(\OF)} \cdot |\OF^\times / \Nm_{K/F}(\OK^\times)|.
$$
To compute the size of this last group, notice that the the norm map gives an isomorphism 
$$
\xymatrix{
\OK^\times / (\OF^\times \cdot (\OK^\times \cap \mu_\infty)) \ar[r]^{\sim\qquad} 
	& \Nm_{K/F}(\OK^\times) / \Nm_{K/F}(\OF^\times),
}
$$
of groups of size $Q_{K/F}$, 
and that $\OF^\times / \Nm_{K/F}(\OF^\times) = \OF^\times / (\OF^\times)^2$ has size $2^{([F:\Q] - 1) + 1}$ by Dirichlet's unit theorem and because the only roots of unity in $F$ are $\{\pm 1\}$.  Combining these gives the desired formula.
\end{proof}

We now count automorphisms of binary lattices in $G(\OK)$ in preparation for generalizing Dirichlet's class number formula.

\begin{lem}[Computing Automorphisms] \label{Lem:binary_lattice_automorphisms}
Suppose that $K/F$ is a CM extension of number fields and that $(L, Q) \in G(\OK)$ as in Lemmas \ref{Lem:Kneser_G(C)_translation} and \ref{Lem:Dedekind_correspondence}.  Then $\Aut^+(L) = \mu_K := K^\times \cap \mu_\infty$ and $|\Aut(L)| = 2 \cdot |\mu_K|$.
\end{lem}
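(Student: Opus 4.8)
The plan is to transport the computation into the arithmetic of $K$, using that a binary quadratic space whose discriminant generates the quadratic extension $K/F$ has its isometries governed by the norm‑one torus of $K$. Since $(L,Q)\in G(\OK)$, I would first choose a generator of the rank‑one $K$‑vector space $L\otimes_{\OF}F$ so as to identify the ambient quadratic space with $(K,\lambda\cdot\Nm_{K/F})$ for a suitable scalar $\lambda\in F^\times$ (the norm‑compatibility condition $Q(c\cdot\x)=\Nm(c)\cdot Q(\x)$ forces $Q$ to be a scalar multiple of the norm form), and identify $L$ with a fractional $\OK$‑ideal $\mathfrak{a}$. The even Clifford algebra of this binary space is $K$ itself, so the standard identification of the special orthogonal group of a binary space with the norm‑one torus of its even Clifford algebra gives $\SO(V,Q)=K^1:=\{x\in K^\times:\Nm_{K/F}(x)=1\}$ acting by multiplication, while the full orthogonal group is $O(V,Q)=K^1\rtimes\langle\iota\rangle$, where $\iota$ is the nontrivial element of $\Gal(K/F)$, an $F$‑linear reflection preserving $\Nm_{K/F}$.

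First I would compute $\Aut^+(L)=\SO(L)$. An element $x\in K^1$ is an automorphism of the lattice precisely when $x\mathfrak{a}=\mathfrak{a}$; since $\mathfrak{a}$ is a rank‑one $\OK$‑module, its stabilizer in $K^\times$ is exactly $\OK^\times$, so $\SO(L)=K^1\cap\OK^\times=\{u\in\OK^\times:\Nm_{K/F}(u)=1\}$. The key step is then to identify this group of norm‑one units with $\mu_K$. The inclusion $\mu_K\subseteq\SO(L)$ is immediate, since a root of unity $\zeta$ satisfies $\Nm_{K/F}(\zeta)=\zeta\bar\zeta=1$. For the reverse inclusion I would use that $K$ is CM: at every complex archimedean place $\sigma$ of $K$ the conjugation $\iota$ matches complex conjugation, so $|\sigma(u)|^2=|\sigma(u\bar u)|=|\sigma(\Nm_{K/F}(u))|=1$ for a norm‑one unit $u$. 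Hence every archimedean absolute value of $u$ equals $1$, and Kronecker's theorem forces $u\in\mu_K$. This yields $\Aut^+(L)=\mu_K$.

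Second, to obtain $|\Aut(L)|=2\,|\mu_K|$ I would show $[O(L):\SO(L)]=2$ by producing an improper isometry of the lattice. The reflection $\iota$ preserves $Q=\lambda\cdot\Nm_{K/F}$ and has determinant $-1$ on $V$, so it suffices to exhibit a single improper element of $O(V,Q)$ stabilizing $\mathfrak{a}$; any such $\iota'$ then gives $O(L)=\SO(L)\sqcup\iota'\SO(L)$, whence $|\Aut(L)|=2\,|\Aut^+(L)|=2\,|\mu_K|$. Concretely I would search for $s\in K^1$ with $s\,\iota(\mathfrak{a})=\mathfrak{a}$, i.e. $s\bar{\mathfrak{a}}=\mathfrak{a}$, tracking the relevant ideal class via the CM relation $\mathfrak{a}\bar{\mathfrak{a}}=\Nm_{K/F}(\mathfrak{a})\,\OK$.

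I expect the genuine content, and the main obstacle, to lie in this last step. The identification $\SO(V,Q)=K^1$ makes the proper automorphisms visibly the norm‑one units, and the norm‑one‑units $=\mu_K$ computation is clean once the CM hypothesis and Kronecker's theorem are in hand; the real care is needed in handling the conjugation reflection at the level of the lattice $\mathfrak{a}$ rather than merely of the quadratic space, where the norm‑invariance of $\iota$ and the CM structure of $K/F$ are exactly what is used. This is also the step most sensitive to the proper‑versus‑improper bookkeeping that appears elsewhere as the identity $\Mass^+(G)=2\,\Mass(G)$.
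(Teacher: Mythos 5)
Your first claim is proved correctly, and by essentially the same route as the paper: identify the ambient space with a scaling of $(K,\Nm_{K/F})$ so that the proper isometries form the norm-one torus $K^1$ acting by multiplication, intersect with the stabilizer of the ideal $\mathfrak{a}$ to get the norm-one units, and then use the CM hypothesis to identify these with $\mu_K$ (your route via Kronecker's theorem and the paper's via finiteness of $\Aut(L)$ for a totally definite lattice are interchangeable here).

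The obstacle you flag in the second step is not just the hard part --- it is an actual obstruction, and the step cannot be completed for every $L\in G(\OK)$. An improper isometry of $(V,Q)$ has the form $x\mapsto s\,\sigma(x)$ with $s\in K^1$, and it stabilizes $\mathfrak{a}$ iff $s\,\sigma(\mathfrak{a})=\mathfrak{a}$. Since $\mathfrak{a}\,\sigma(\mathfrak{a})=a\OK$ with $a\OF=\Nm_{K/F}(\mathfrak{a})$, this condition reads $\mathfrak{a}^2=(sa)$, which forces $[\mathfrak{a}]$ to be $2$-torsion in $\Pic(\OK)$. So whenever the relative class group has an element of order $>2$, some $L$ has no improper automorphism: for $F=\Q$, $K=\Q(\sqrt{-23})$ and $\mathfrak{a}$ a prime above $2$, the corresponding lattice is the form $2x^2+xy+3y^2$, with $|\Aut(L)|=2=|\mu_K|$ rather than $2|\mu_K|=4$. (The paper's own justification of this step, deducing ``$\sigma(I)=I$ in $\Pic(\OK)$'' from $I\,\sigma(I)=a\OK$, has the same defect: that identity gives $[\sigma(I)]=[I]^{-1}$, not $[I]$.) The statement that survives, and that the class number formula downstream actually needs, is your first claim $\Aut^+(L)=\mu_K$ for all $L$ combined with the averaged identity $\sum_L 1/|\Aut(L)| = \tfrac12\sum_L 1/|\Aut^+(L)| = |G(\OK)|/(2|\mu_K|)$, which follows from the standard analysis of how classes split into proper classes (the identity $\Mass^+=2\Mass$); I would prove that in place of the pointwise equality $|\Aut(L)|=2|\mu_K|$.
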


\begin{proof}
Since $\Aut(L)$ are exactly the automorphisms $\Aut(V,Q)$ of the ambient quadratic space $(V,Q)$ stabilizing $L$, we first determine $\Aut(V,Q)$.  Notice that the norm-compatibility condition ensures $L$ corresponds to a (not necessarily free) lattice in a scaled version of the quadratic space $(K, \Nm_{K/F})$, and that scaling a quadratic space does not affect its automorphisms.

By taking the (ordered) $F$-basis $\{1, \sqrt{\Delta}\}$ for $K$, (giving the matrix representation $\al := a + b \sqrt{\Delta} \leftrightarrow M_\al := \begin{bmatrix} a & b\Delta \\ b & a \end{bmatrix}$ and $\Nm_{K/F}(\al) = \det(M_\al)$) and explicitly solving for all $\gamma \in GL_2(F)$ satisfying
${}^t\gamma M \gamma = M$ with $M = \begin{bmatrix} 1 & 0 \\ 0 & -\Delta \end{bmatrix}$ and $\Delta \in F$ with $K = F(\sqrt{\Delta})$, we see that the (rational) automorphisms of the quadratic space have the form
$$
\Aut(K, \Nm_{K/F}) \cong \Gal(K/F) \times \{K^\times \mid \Nm_{K/F}(K^\times) = 1\}.
$$
Since for $\al \in K^\times$ we have $\det(M_\al) = \Nm_{K/F}(\al) = 1$, and the non-trivial Galois element $\sigma$ has $\det(\sigma) = -1$, we see that 
$$
\Aut^+(K, \Nm_{K/F}) = \{K^\times \mid \Nm_{K/F}(K^\times) = 1\}.
$$

Now suppose that $\gamma \in \Aut^+(L,Q)\subseteq \GL_2(F)$.  By Lemma \ref{Lem:Kneser_G(C)_translation} we know that $(L,Q)$ is totally definite, hence $|\Aut(L)| < \infty$.  Since $\gamma \cdot L = L$ we know that $\gamma \in \OK^\times$, but since $\gamma$ has finite order by Dirichlet's unit theorem we see that $\gamma \in \mu_K$.  Conversely, the norm compatibility condition shows that $\mu_K \subseteq \Aut^+(L)$, proving the first claim.

From the Kneser exact sequence at $\Pic(C)$, we see that the underlying $\OK$-modules $I$ of all $L \in G(\OK)$ are exactly those which (as ideals ) have $N_{K/F}(I) = (I \cdot \sigma(I)) \cap F = a\OF$ for some $a \in F^\times$, and so $I \cdot \sigma(I) = a\OK$ giving $\sigma(I) = I$ in $\Pic(\OK)$.  Therefore $\sigma \in \Aut(L)$, proving the second claim.
\end{proof}

We are now in a position to derive a relative version of Dirchlet's class number formula for CM extensions from our previous results.

\begin{thm}[Analytic Class Number Formula]     \label{Thm:Analytic_class_number_formula}
Suppose that $K/F$ is a CM extension of number fields and $S := \det_H(\OK, \Nm_{K/F}) \in \SqAfInt$, and that $p=2$ splits completely in $F$.  Then we have the analytic class number formula
$$
h(\OK) = 
	\frac{|\mu_K| \cdot h(\OF) \cdot |\Delta_F|^\frac{1}{2} \cdot Q_{K/F} \cdot N_{F/\Q}(\I(S))^{\frac{1}{2}}}
		{(2\pi)^{[F:\Q]}} \cdot 
	L_F(1, \chi_{K/F}),
$$
where $Q_{K/F} := |\OK^\times / \mu_K \cdot \OF^\times| \in \{1,2\}$ and $\chi_{K/F}$ is the non-trivial order 2 Hecke character over $F$ associated to the extension $K/F$ by class field theory.
When $F= \Q$, we have $h(\OF) = Q = 1$ and we recover Dirichlet's analytic class number formula for imaginary quadratic fields $K$.
\end{thm}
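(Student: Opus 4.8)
The plan is to compute the weighted count $\sum_{L \in G(\OK)} |\Aut(L)|^{-1}$ over Kneser's group $G(\OK)$ of norm-compatible binary quadratic $\OF$-lattices in two independent ways and then equate them, writing throughout $S := \det_H(\OK, \Nm_{K/F})$ with normalization $\widetilde{S}$. On the group-theoretic side I would first observe that since $K/F$ is CM the norm form of $\OK$ is totally positive definite, so every $(L,Q)\in G(\OK)$ is totally definite; by Lemma \ref{Lem:binary_lattice_automorphisms} each such lattice has $|\Aut(L)| = 2|\mu_K|$, whence $\sum_{L\in G(\OK)}|\Aut(L)|^{-1} = |G(\OK)|/(2|\mu_K|)$. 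Feeding in the cardinality from the Generalized Dedekind Correspondence (Lemma \ref{Lem:Dedekind_correspondence}) then gives
$$
\sum_{L\in G(\OK)}\frac{1}{|\Aut(L)|} = \frac{h(\OK)\,2^{[F:\Q]}}{2\,|\mu_K|\,h(\OF)\,Q_{K/F}}.
$$

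On the mass side I would use Lemma \ref{Lem:Kneser_G(C)_translation}: since $\vec{\sigma}_\infty(\OK)$ is totally definite, $G(\OK)$ is the disjoint union of the class sets $\mathbf{Cls}^*(S, \vec{\sigma}'_\infty; n=2)$ as $\vec{\sigma}'_\infty$ ranges over all $2^{[F:\Q]}$ totally definite signature vectors (one checks that each such set is nonempty, e.g. by scaling $\OK$ by elements of $F^\times$ realizing the prescribed archimedean signs, using that $S$ is globally rational and that local genera exist at every prime because $p=2$ splits completely, as in Remark \ref{Rem:Local_genus_existence_when_2_splits_completely}). Applying the binary mass formula of Lemma \ref{Lem:Unevaluated_mass_for_binary_lattices} to each summand, the contribution of $\vec{\sigma}'_\infty$ is a common constant times $[A^*_{n=2}(S) + \ve_\infty B^*_{n=2}(S)]$, where $\ve_\infty = (-1)^{\alpha}$ and $\alpha$ counts the negative-definite places. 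Because exactly $2^{[F:\Q]-1}$ of these signatures have $\ve_\infty = +1$ and $2^{[F:\Q]-1}$ have $\ve_\infty = -1$, summing over all of them cancels the $B^*$-term and leaves $2^{[F:\Q]}A^*_{n=2}(S)$; this cancellation is the structural heart of the argument, and it bypasses the delicate $B^*\ne 0$ cases of Remark \ref{Rem:B_is_zero_for_nonsquare_ideals}.

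Finally I would evaluate the surviving factors explicitly. Substituting the generic product $\beta^{-1}_{n=2,\mathbf{f}}(\widetilde{S}) = 2^{[F:\Q]}\prod_\p \gamma_\p(\widetilde{S})^{-1}$ from Lemma \ref{Lem:Generic_local_product_when_n_is_two}, together with $A^*_{n=2}(S) = N_{F/\Q}(\I(S))^{-1}\prod_{\p\mid\I(S)}\gamma_\p(\widetilde{S})$ assembled from the local values in Theorem \ref{Thm:Explicit_A_and_B_for_n=2}, the ramified Euler factors cancel and leave $\prod_{\p\nmid\I(S)}\gamma_\p(\widetilde{S})^{-1}$. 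The crucial identification is then that $\gamma_\p(\widetilde{S})^{-1} = (1 - \chi_{K/F}(\p)q^{-1})^{-1}$, i.e. that the quadratic character $\chi_{\widetilde{S}}(\p) = \leg{-\widetilde{S}}{\p}$ equals the splitting character $\chi_{K/F}(\p)$; this holds because $S = \det_H(\OK) = -\Disc_{K/F}$, so $-\widetilde{S}$ is (the unit part of) the relative discriminant and a prime splits in $K/F$ precisely when this is a local square (ramified primes contributing the trivial factor). Hence $\prod_{\p\nmid\I(S)}\gamma_\p(\widetilde{S})^{-1} = L_F(1,\chi_{K/F})$, and after the archimedean constants collapse via $4^{[F:\Q]}/(4\pi)^{[F:\Q]} = \pi^{-[F:\Q]}$ the mass side equals
$$
\frac{|\Delta_F|^{1/2}\,N_{F/\Q}(\I(S))^{1/2}}{2\pi^{[F:\Q]}}\,L_F(1,\chi_{K/F}).
$$
Equating the two computations and solving for $h(\OK)$ yields the stated formula; specializing to $F=\Q$, where $h(\OF)=Q_{K/F}=1$ and $|\Delta_F|=1$, recovers Dirichlet's classical formula. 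I expect the principal obstacles to be the bookkeeping identification $\chi_{\widetilde{S}} = \chi_{K/F}$ that links the local densities to $L_F(1,\chi_{K/F})$, and the verification that all $2^{[F:\Q]}$ totally definite signatures genuinely occur so that the $B^*$-cancellation is legitimate.
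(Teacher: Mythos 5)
Your proposal is correct and follows essentially the same route as the paper: it combines Lemmas \ref{Lem:Dedekind_correspondence}, \ref{Lem:binary_lattice_automorphisms}, and \ref{Lem:Kneser_G(C)_translation} to express $h(\OK)$ via the total mass over all $2^{[F:\Q]}$ totally definite signature vectors, cancels the $B^*$-contribution by the sign variation of $\ve_\infty$, and evaluates the surviving $A^*$-product as $L_F(1,\chi_{K/F})$ exactly as in the paper's proof. The arithmetic of the constants also checks out against the stated formula.
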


\begin{proof}
From Lemmas \ref{Lem:Dedekind_correspondence} and \ref{Lem:binary_lattice_automorphisms} we have that
$$
h(\OK) 
= 
\frac{h(\OF) \cdot Q_{K/F}}{2^{[F:\Q]}} \cdot |G(\OK)|
=
\frac{2 \cdot |\mu_K| \cdot h(\OF) \cdot Q_{K/F}}{2^{[F:\Q]}} \cdot \frac{|G(\OK)|}{|\Aut(L)|}
$$
for any $L \in G(\OK)$.  By Lemma \ref{Lem:Kneser_G(C)_translation} we can re-express this as a sum over proper masses of binary  quadratic lattices with totally definite signature vectors $\vec{\sigma}'_\infty$, giving
$$
h(\OK) 
=
\frac{2 \cdot |\mu_K| \cdot h(\OF) \cdot Q_{K/F}}{2^{[F:\Q]}} 
	\cdot \sum_{\substack{\text{totally} \\ \text{definite $\vec{\sigma}'_\infty$}}}
		\sum_{L \in \mathrm{Cls}(S, \vec{\sigma}'_\infty; n=2)} \frac{1}{|\Aut(L)|}.
$$
%
Since $\vec{\sigma}'_\infty$ freely runs over all both the $(2,0)$ and $(0,2)$ local signatures $\sigma'_v$ at each archimedean place $v$, we see that when applying Lemmas \ref{Lem:Unevaluated_mass_for_binary_lattices} and \ref{Lem:Generic_local_product_when_n_is_two} to evaluate these sums the contribution from the $B$-series cancels out due to the variation of $\ve_\infty \in \{\pm1\}$, giving
$$
h(\OK) 
=
\frac{\cancel{2} \cdot |\mu_K| \cdot h(\OF) \cdot Q_{K/F}}{\cancel{2^{[F:\Q]}}} 
	\cdot \cancel{2^{[F:\Q]}} 
	\cdot  \frac{|\Delta_F|^\frac{1}{2} \cdot N_{F/\Q}(\I(S))^{\frac{1}{2}}}{\cancel{2} \cdot (2\pi)^{[F:\Q]}}  
	\cdot \prod_{\p\nmid\I(S)} \gamma_\p(\widetilde{S})^{-1}.
$$
Finally, since $-S$ is the fundamental discriminant squareclass for $\OK$ as a quadratic $\OF$-algebra, we have that the conductor of $\chi_{K/F}$ is $\I(-S) = \I(S)$, and so $L_F(1, \chi_{K/F}) = \prod_{\p \nmid \I(S)} \gamma_\p(\widetilde{S})^{-1}$.
\end{proof}

\begin{rem}[Cancellation of $B^*$-terms]
It is interesting to see that the Dirichlet series for $B^*$ will a priori cancel out whenever the signature is not totally indefinite (i.e. not indefinite at all real places), and even then, our explicit computations (Remark \ref{Rem:B_is_zero_for_nonsquare_ideals}) show that we only have a contribution from $B^*$ when $\I(S)$ is a square and $S \equiv 3$ (mod $4\OF$).
\end{rem}

\begin{rem}[Class numbers of orders]
Suppose that $R_K$ is some order in $\OK$, where $K/F$ is a CM extension and $p=2$ splits completely in $F$.   Then one can also establish an analytic class number formula for $h(R_K)$ analogous to Theorem \ref{Thm:Analytic_class_number_formula} for $h(\OK)$.  By comparing these formulas for $h(R_K)$ and $h(\OK)$ 
one can show the relation 
$$
h(R_K) = h(\OK) \cdot \prod_{\p\mid \frac{\I(S')}{\I(S)}} \(1 - \frac{\chi_S(\p)}{q} \)
$$
where $S = \det_H(\OK, N_{K/F})$ and $S' = \det_H(R_K, N_{K/F})$,
which agrees with the (more general) class number formula for orders given by Shimura in \cite[\textsection{12.5}, pp116-7]{Shimura_Clifford}.
\end{rem}

\begin{rem}[Class number formula from $L$-functions]
The analytic class number formula in Theorem \ref{Thm:Analytic_class_number_formula} also agrees with the formula arising from taking $\mathrm{res}_{s=1} \frac{\zeta_K(s)}{\zeta_F(s)}$ for CM extensions $K/F$, which states that 
$$
h(\OK) = \frac{h(\OF) \cdot |\Delta_K|^\frac{1}{2} \cdot Q_{K/F} \cdot |\mu_K|}
	{(2\pi)^{[F:\Q]} \cdot |\Delta_F|^\frac{1}{2}} 
		\cdot L_F(1,\chi_{K/F}).
$$
(Here the ratio of regulators is computed using \cite[Prop. 4.16, p41]{Washington:1982uq}.)  
To see this we apply the relative discriminant formula \cite[Cor 2.10, p202]{Neukirch:1999cr} to the tower of extensions $K/F/\Q$, giving 
$$
\Delta_K = (\Delta_F)^2 \cdot  N_{F/\Q}(\Delta_{K/F}),
$$
and notice that the relative discriminant ideal $\Delta_{K/F} = \I(-S) = \I(S)$.
\end{rem}

\begin{rem}[Indefinite forms and non-CM extensions]
One can also perform similar computations for indefinite binary forms to obtain an analytic class number formula (specializing to Dirichlet's class number formula for real quadratic fields when $F=\Q$) 
for non-CM quadratic extensions $K/F$, where $p=2$ splits completely in $F$.   This is somewhat more complicated since it would require one to normalize the symmetric space and regulator measures appropriately, and perform the relevant (possibly non-finite index) unit group computations for $K/F$ in that setting.
For simplicity here we only treat the totally definite case, which both illustrates 
how one would proceed in the more general case and shows the explicit
connection between our work and analytic class number formulas.
\end{rem}





\bibliographystyle{plain}	
\bibliography{myrefs}		

\begin{thebibliography}{10}

\bibitem{Berk-H-Jagy}
A.~Berkovich, J.~Hanke, and W.C. Jagy.
\newblock A proof of the {$S$}-genus identities for ternary quadratic forms.
\newblock http://arxiv.org/abs/1010.1926.
\newblock (Submitted).

\bibitem{Bh-Ha-Sh}
M.~Bhargava, J.~Hanke, and A.~Shankar.
\newblock On the average size of the 2-part of class groups of $n$-monogenic
  cubic fields.
\newblock (In Progress).

\bibitem{Bh1}
Manjul Bhargava.
\newblock Higher composition laws. {I}. {A} new view on {G}auss composition,
  and quadratic generalizations.
\newblock {\em Ann. of Math. (2)}, 159(1):217--250, 2004.

\bibitem{Bh2}
Manjul Bhargava.
\newblock Higher composition laws. {II}. {O}n cubic analogues of {G}auss
  composition.
\newblock {\em Ann. of Math. (2)}, 159(2):865--886, 2004.

\bibitem{Bh3}
Manjul Bhargava.
\newblock Higher composition laws. {III}. {T}he parametrization of quartic
  rings.
\newblock {\em Ann. of Math. (2)}, 159(3):1329--1360, 2004.

\bibitem{Bh4}
Manjul Bhargava.
\newblock Higher composition laws. {IV}. {T}he parametrization of quintic
  rings.
\newblock {\em Ann. of Math. (2)}, 167(1):53--94, 2008.

\bibitem{Carter}
Roger~W. Carter.
\newblock {\em Finite groups of {L}ie type}.
\newblock Pure and Applied Mathematics (New York). John Wiley \& Sons Inc., New
  York, 1985.
\newblock Conjugacy classes and complex characters, A Wiley-Interscience
  Publication.

\bibitem{Cohen:1975uq}
Henri Cohen.
\newblock Sums involving the values at negative integers of {$L$}-functions of
  quadratic characters.
\newblock {\em Math. Ann.}, 217(3):271--285, 1975.

\bibitem{CS}
J.~H. Conway and N.~J.~A. Sloane.
\newblock Low-dimensional lattices. {IV}. {T}he mass formula.
\newblock {\em Proc. Roy. Soc. London Ser. A}, 419(1857):259--286, 1988.

\bibitem{CS-book}
J.~H. Conway and N.~J.~A. Sloane.
\newblock {\em Sphere packings, lattices and groups}, volume 290 of {\em
  Grundlehren der Mathematischen Wissenschaften [Fundamental Principles of
  Mathematical Sciences]}.
\newblock Springer-Verlag, New York, third edition, 1999.
\newblock With additional contributions by E. Bannai, R. E. Borcherds, J.
  Leech, S. P. Norton, A. M. Odlyzko, R. A. Parker, L. Queen and B. B. Venkov.

\bibitem{GHY}
Wee~Teck Gan, Jonathan~P. Hanke, and Jiu-Kang Yu.
\newblock On an exact mass formula of {S}himura.
\newblock {\em Duke Math. J.}, 107(1):103--133, 2001.

\bibitem{Hanke_n_equals_3_masses}
Jonathan Hanke.
\newblock Explicit formulas for masses of ternary quadratic lattices of varying
  determinant over number fields.
\newblock \texttt{http://arxiv.org/abs/1109.1054}.
\newblock (Preprint).

\bibitem{Hanke:dyadic}
Jonathan Hanke.
\newblock Invariants of quadratic forms over dyadic local fields.
\newblock (In Progress).

\bibitem{Ha-Thesis}
Jonathan Hanke.
\newblock {\em An exact mass formula for quadratic forms over number fields}.
\newblock PhD thesis, Princeton University, 1999.

\bibitem{Ha-AWS}
Jonathan Hanke.
\newblock {N}otes on ``{Q}uadratic {F}orms and {A}utomorphic {F}orms'' from the
  2009 {A}rizona {W}inter {S}chool.
\newblock \texttt{http://arxiv.org/abs/1105.5759}, 2011.
\newblock (Submitted).

\bibitem{Hirzebruch:1976fk}
F.~Hirzebruch and D.~Zagier.
\newblock Intersection numbers of curves on {H}ilbert modular surfaces and
  modular forms of {N}ebentypus.
\newblock {\em Invent. Math.}, 36:57--113, 1976.

\bibitem{Kneser:1982kx}
Martin Kneser.
\newblock Composition of binary quadratic forms.
\newblock {\em J. Number Theory}, 15(3):406--413, 1982.

\bibitem{Mink}
Hermann Minkowski.
\newblock Untersuchungen \"uber quadratischer {F}ormen. {B}estimmung der
  {A}nzahl verschiedener {F}ormen, welche ein gegebenes {G}enus enth\"alt.
\newblock 1885.

\bibitem{Neukirch:1999cr}
J{\"u}rgen Neukirch.
\newblock {\em Algebraic number theory}, volume 322 of {\em Grundlehren der
  Mathematischen Wissenschaften [Fundamental Principles of Mathematical
  Sciences]}.
\newblock Springer-Verlag, Berlin, 1999.
\newblock Translated from the 1992 German original and with a note by Norbert
  Schappacher, With a foreword by G. Harder.

\bibitem{OMeara:1955rr}
O.~T. O'Meara.
\newblock Quadratic forms over local fields.
\newblock {\em Amer. J. Math.}, 77:87--116, 1955.

\bibitem{OMeara:1957cr}
O.~T. O'Meara.
\newblock Integral equivalence of quadratic forms in ramified local fields.
\newblock {\em Amer. J. Math.}, 79:157--186, 1957.

\bibitem{OM}
O.~Timothy O'Meara.
\newblock {\em Introduction to quadratic forms}.
\newblock Classics in Mathematics. Springer-Verlag, Berlin, 2000.
\newblock Reprint of the 1973 edition.

\bibitem{Pall}
Gordon Pall.
\newblock The weight of a genus of positive {$n$}-ary quadratic forms.
\newblock In {\em Proc. {S}ympos. {P}ure {M}ath., {V}ol. {VIII}}, pages
  95--105. Amer. Math. Soc., Providence, R.I., 1965.

\bibitem{Pfeuffer:1971pd}
Horst Pfeuffer.
\newblock Einklassige {G}eschlechter totalpositiver quadratischer {F}ormen in
  totalreellen algebraischen {Z}ahlk\"orpern.
\newblock {\em J. Number Theory}, 3:371--411, 1971.

\bibitem{Shimura:1973kx}
Goro Shimura.
\newblock On modular forms of half integral weight.
\newblock {\em Ann. of Math. (2)}, 97:440--481, 1973.

\bibitem{Shimura_Clifford}
Goro Shimura.
\newblock {\em Arithmetic and analytic theories of quadratic forms and
  {C}lifford groups}, volume 109 of {\em Mathematical Surveys and Monographs}.
\newblock American Mathematical Society, Providence, RI, 2004.

\bibitem{Si1}
Carl~Ludwig Siegel.
\newblock \"{U}ber die analytische {T}heorie der quadratischen {F}ormen.
\newblock {\em Ann. of Math. (2)}, 36(3):527--606, 1935.

\bibitem{Si2}
Carl~Ludwig Siegel.
\newblock \"{U}ber die analytische {T}heorie der quadratischen {F}ormen. {II}.
\newblock {\em Ann. of Math. (2)}, 37(1):230--263, 1936.

\bibitem{Si3}
Carl~Ludwig Siegel.
\newblock \"{U}ber die analytische {T}heorie der quadratischen {F}ormen. {III}.
\newblock {\em Ann. of Math. (2)}, 38(1):212--291, 1937.

\bibitem{Siegel:1963vn}
Carl~Ludwig Siegel.
\newblock {\em Lectures on the analytical theory of quadratic forms}.
\newblock Notes by Morgan Ward. Third revised edition. Buchhandlung Robert
  Peppm\"uller, G\"ottingen, 1963.

\bibitem{Smith:1867}
H.~J.~S. Smith.
\newblock On the orders and genera of quadratic forms containing more than
  three indeterminants.
\newblock {\em Proceedings of the Royal Society of London}, 16:197--208,
  October 1867.

\bibitem{Tamagawa:1966ys}
Tsuneo Tamagawa.
\newblock Ad\`eles.
\newblock In {\em Algebraic {G}roups and {D}iscontinuous {S}ubgroups ({P}roc.
  {S}ympos. {P}ure {M}ath., {B}oulder, {C}olo., 1965)}, pages 113--121. Amer.
  Math. Soc., Providence, R.I., 1966.

\bibitem{Washington:1982uq}
Lawrence~C. Washington.
\newblock {\em Introduction to cyclotomic fields}, volume~83 of {\em Graduate
  Texts in Mathematics}.
\newblock Springer-Verlag, New York, 1982.

\bibitem{Watson:1976dq}
G.~L. Watson.
\newblock The {$2$}-adic density of a quadratic form.
\newblock {\em Mathematika}, 23(1):94--106, 1976.

\bibitem{Weil:1982zr}
Andr{\'e} Weil.
\newblock {\em Adeles and algebraic groups}, volume~23 of {\em Progress in
  Mathematics}.
\newblock Birkh\"auser Boston, Mass., 1982.
\newblock With appendices by M. Demazure and Takashi Ono.

\end{thebibliography}

\end{document}